\documentclass[11pt, oneside]{article}

\usepackage[utf8]{inputenc}
\usepackage[T1]{fontenc}
\usepackage{xcolor}
\usepackage{color}

\usepackage{amsmath}
\usepackage{amssymb}
\usepackage{mathrsfs}
\usepackage{bm}

\usepackage{amsthm}
\theoremstyle{plain}
\newtheorem{theorem}{Theorem}[section]

\newtheorem{proposition}[theorem]{Proposition}
\newtheorem{lemma}[theorem]{Lemma}
\theoremstyle{definition}

\newtheorem{assumption}[theorem]{Assumption}

\newtheorem{example}[theorem]{Example}
\newtheorem{remark}[theorem]{Remark}

\usepackage[english]{babel}
\usepackage[round, authoryear]{natbib}
\usepackage{hyperref}
\hypersetup{
    pdfpagemode={UseOutlines},
    bookmarksopen,
    pdfstartview={FitH},
    colorlinks,
    linkcolor={blue},
    citecolor={blue},
    urlcolor={blue}
}
\usepackage[letterpaper,margin=1.25in]{geometry}
\usepackage{enumitem}

\def\iid{\overset{\textnormal{iid}}{\sim}}

\def\N{\mathbb{N}}\def\R{\mathbb{R}}\def\T{\mathbb{T}}\def\X{\mathbb{X}}\def\Z{\mathbb{Z}}

\def\Acal{\mathcal{A}}\def\Bcal{\mathcal{B}}\def\Ccal{\mathcal{C}}\def\Dcal{\mathcal{D}}\def\Ecal{\mathcal{E}}\def\Lcal{\mathcal{L}}\def\Ncal{\mathcal{N}}\def\Pcal{\mathcal{P}}\def\Vcal{\mathcal{V}}\def\Wcal{\mathcal{W}}\def\Xcal{\mathcal{X}}

\def\Var{\textnormal{Var}}

\title{\bf Posterior contraction rates in Sobolev norms and Bayesian derivative estimation for infinite-dimensional exponential families}
\author{Emanuele Dolera$^*$, Stefano Favaro$^{\star,\dagger}$ and Matteo Giordano$^\star$ \\ \\ 
$^*$University of Pavia, $^{\star}$University of Turin and $^{\dagger}$Collegio Carlo Alberto, Turin}
\date{} 

\begin{document}

\maketitle

\begin{abstract}
We study posterior contraction in positive-order Sobolev norms and Bayesian derivative estimation for infinite-dimensional exponential families. We embed the natural parameter in a Hilbert scale and model it via a standard Gaussian series prior expanded in the eigenbasis generating the scale. Under a two-sided link condition on the Fisher information and suitable local regularity assumptions, we show that smoothness-matching priors achieve minimax-optimal posterior contraction rates in any Hilbert scale norm up to the regularity of the ground truth. Our analysis builds on the novel approach to posterior contraction based on the Wasserstein distance recently introduced by \citet{dolera2024strong}. It combines refined Laplace-type estimates for infinite-dimensional integrals associated to the posterior kernels with a mixed-geometry estimate controlling their stability under fluctuations in the data, itself resting on a tailored Poincaré inequality for posterior distributions conditioned on neighbourhoods of the truth. We apply the general theory to density estimation with a logistic parametrisation, Poisson intensity estimation with an exponential link, and the Gaussian white-noise model, yielding minimax contraction rates in Sobolev norms across all three settings. In particular, these yield optimal recovery of density score functions and derivatives of Poisson intensities.

\vspace{.25cm}

\noindent \textbf{Keywords.} Derivative estimation; Frequentist analysis of posterior distributions; Gaussian priors; Infinite-dimensional exponential families; Wasserstein distance.
\end{abstract}

\tableofcontents

\section{Introduction}
\label{Sec:Intro}

Nonparametric Bayesian methods have become a standard approach to inference in infinite-dimensional statistical models. Within the Bayesian framework, a prior probability measure is placed on the unknown parameter and updated through the likelihood to produce the posterior distribution, from which point estimates, uncertainty quantification, and predictions are obtained. Even when the posterior is not available in closed form, it can often be explored numerically using powerful Markov chain Monte Carlo techniques. This unified treatment of the core tasks of statistical inference, together with the flexibility to encode structural information in the prior and the availability of advanced computational procedures, accounts for the widespread use of nonparametric Bayesian methods across the sciences. We refer to the monograph by \citet{ghosal2017fundamentals}, as well as to \citet{hjort2010bayesian,muller2015bayesian,castillo2024bayesian}, for broad accounts of the nonparametric Bayesian approach, encompassing modelling, computation, applications, and theory.

The practical appeal of nonparametric Bayesian methods makes a rigorous assessment of their performance particularly important. In infinite-dimensional models, genuinely subjective prior beliefs are typically difficult to elicit, and the prior is often chosen partly for its regularising properties, controlling features of the parameter that are only weakly informed by the likelihood.  A natural question is therefore whether, from the frequentist perspective in which the data are generated by a fixed ground truth, a poorly calibrated prior can lead to suboptimal inference for the true parameter or even prevent its recovery altogether. In regular finite-dimensional models, the Bernstein--von Mises theorem provides a broadly positive answer: as the sample size grows, the posterior contracts at the parametric rate, while credible sets are asymptotically valid confidence sets with optimal diameter \citep[Chapter 10]{vaart2000asymptotic}. On the other hand, in the infinite-dimensional framework, no analogous general principle holds, as demonstrated by classical counterexamples of \citep{diaconis1986consistency,cox1993analysis,freedman1999bernstein}, among others.

This issue has motivated an extensive literature on the frequentist analysis of nonparametric Bayesian methods, with early foundational posterior consistency results established by \citet{doob1949application} and \citet{schwartz1965bayes}. Their quantitative refinement, through the notion of contraction rates, was developed in the seminal works of \citet{ghosal2000convergence} and \citet{shen2001rates}, which gave rise to an intensely active area of research that now encompasses a wide range of statistical models, prior distributions, loss functions, and analytical techniques  \citep{ghosal2007convergence,walker2007rates,vaart2008rates,gine2011rates,rousseau2011asymptotic}; see \citep[Chapter 7.3]{gine2016mathematical}; \citep[Chapters 8 and 9]{ghosal2017fundamentals}, and \cite{castillo2024bayesian}, for a comprehensive treatment.

A defining feature of this line of work is that posterior contraction rates are typically established in statistical metrics determined by the testing geometry of the experiment under consideration, such as the Hellinger distance for independent and identically distributed (i.i.d.)~observations \citep[Chapter 8]{ghosal2017fundamentals}. To obtain recovery guarantees for the parameter itself, these statistical metrics must then be related to the distance representing the inferential target. For several problems involving integrated losses, such as ones of $L^1$- and $L^2$-type, this comparison is rate-preserving and yields minimax-optimal contraction \citep[Chapter 9]{ghosal2017fundamentals}. By contrast, for stronger metrics, such as the supremum norm or positive-order Sobolev distances, this generally requires inversion inequalities or interpolation arguments and may lead to suboptimal results. In such cases, ad hoc studies tailored to specific model and prior combinations can still result in optimal contraction rates \citep[e.g.,]{castillo2014supremum,yoo2016supremum,shen2017posterior}. However, to the best of our knowledge, no comparably general techniques exists for frequentist analyses of posterior distributions in strong norms not directly controlled by the likelihood, for which, without additional structural restrictions, uniformly exponentially consistent tests at the desired scale are not guaranteed to exist.

Here, we are interested in posterior contraction in positive-order Sobolev norms, motivated by the task of inferring the derivatives of an unknown function. Such norms jointly control the distance between an estimate and its target, as well as the discrepancy between their mixed partial derivatives up to the corresponding order, thereby guaranteeing simultaneous recovery of the parameter and its differential structure. Derivatives are themselves central objects in many applications: they describe marginal effects and local rates of change in regression \citep{hardle1989investigating}, determine modes and other shape features of probability density functions (p.d.f.'s) \citep{genovese2016non}, and underpin mean-shift methods for clustering, pattern recognition, and image processing \citep{comaniciu2002mean}. Another prominent inferential target is the score function of an unknown p.d.f., whose estimation lies at the heart of score matching and modern score-based generative modelling, including diffusion models \citep{song2019generative}.

There is by now an extensive frequentist literature on derivative estimation, including foundational contributions by \citet{bhattacharya1967estimation,singh1977improvement,stone1982optimal}. More recently, minimax convergence rates under Sobolev norms were established by \citet{comte2020optimal} in density estimation and by \citet{fischer2020sobolev} in nonparametric regression models. See also \citet{wibisono2024optimal} for optimal results in score estimation problems. Theoretical results for the nonparametric Bayesian approach to the problem have developed mainly only over the past decade, and remain largely model- and prior-specific, reflecting the aforementioned challenge in pursuing testing-based arguments for posterior contraction in stronger metrics than the Hellinger distance. 

In multivariate nonparametric regression, \citet{yoo2016supremum} obtained minimax pointwise, $L^2$- and supremum-norm contraction rates for mixed partial derivatives using tensor-product B-spline priors. Adaptive rates were established by \citet{yoo2018adaptive} under wavelet spike-and-slab priors and, recently, by \citet{liu2026optimal} for plug-in differentiated Gaussian-process posteriors. In the Gaussian white-noise model, \citet{dolera2024besov} proved optimal contraction in Sobolev norms and derivative recovery using Besov--Laplace series priors; see also earlier related results in \citet{castillo2013nonparametric}. For density estimation, the only result we are aware of is due to \citet{shen2017posterior}, who derived minimax $L^2$-contraction rates for the density derivatives under B-spline series priors. Lastly, we note that results for Gaussian sequence models and linear inverse problems \citep[e.g.,][]{belitser2003adaptive,knapik2011bayesian,gugushvili2020bayesian} can also accommodate derivative estimation by casting this problem in the framework of mildly ill-posed inverse problems.

Recently, \citet{dolera2024strong} introduced an alternative, non-testing-based route to the frequentist analysis of posterior distributions in general metric spaces, built on a characterisation of contraction rates through the Wasserstein distance \citep{ambrosio2008gradient}. They developed general bounding techniques that combine the Laplace methods for integral approximations \cite{wong2001asymptotic} with the dynamic formulation of the Wasserstein distance \citep{benamou2000computational} and applied the resulting framework to several applications, including an investigation of posterior contraction in Sobolev norms for infinite-dimensional exponential families and for density estimation under the logistic parametrisation. Their analysis, however, only applies to oracle Gaussian series priors whose covariance operators are required to be diagonal in an eigenbasis of the Fisher information at the ground truth cf.~\citep[Section 3.1]{dolera2024strong}, and results in suboptimal contraction rates for the density derivatives, cf.~\citep[Section 4.4]{dolera2024strong}.

%
%
%

\subsection{Contributions}

In the present paper, we build on the earlier investigation of \citet{dolera2024strong} to study posterior contraction in Sobolev norms and derivative estimation in infinite-dimensional exponential families \citep{pistone1995infinite,canu2006kernel,fukumizu2009exponential}. These generalise classical parametric exponential families \citep{brown1986fundamentals} by allowing the natural parameter to range over an infinite-dimensional Hilbert space and the data to enter the likelihood through a sufficient statistic taking values in its topological dual; see Section \ref{Subsec:Model} for definitions and details. This class retains the convenient analytic structure of its parametric counterpart, yielding a convex and smooth log-partition function whose first two derivatives identify the mean map and the Fisher information, and whose associated Bregman divergence coincides with the Kullback--Leibler divergence. At the same time, infinite-dimensional exponential families accommodate several important statistical models through a suitable specification of their natural parameter and sufficient statistic, including density estimation, Poisson intensity estimation and Gaussian white noise, cf.~Examples \ref{Ex:DensityEstim}--\ref{Ex:WhiteNoise}.

The asymptotic properties of frequentist estimators in infinite-dimensional exponential families have been investigated in several articles, notably in connection with score matching and kernel methods \citep{fukumizu2009exponential,gretton2012kernel,sriperumbudur2017density,fukumizu2024estimation}. The nonparametric Bayesian literature on the topic is instead comparatively sparse, with available contributions including the Bernstein--von Mises-type Gaussian approximations of \citet{shang2018gaussian}, alongside the aforementioned posterior contraction analysis of \citet{dolera2024strong}. Although a vast body of work provides contraction rates for the individual models encompassed by this framework, as reviewed above, general techniques yielding optimal rates in positive-order Sobolev norms remain unavailable. Our work aims to help fill this gap.

To formally encode the notion of Sobolev-type norms and smoothness within the considered infinite-dimensional exponential families, we embed the parameter space in a Hilbert scale $\{H_s\}_{s\in\R}$ \citep{krein1966scales}. This provides a unified framework to characterise the interplay among the strength of the loss, the regularity
of the prior and the smoothness of the ground truth, and allows a concise presentation of the main model assumptions as comparisons between certain linear operators and fixed powers of the scale generator.  Hilbert scales are standard tools in the analysis of ill-posed problems \citep[e.g.,][]{natterer1984error} and are routinely employed to formulate regularity conditions in Bayesian inversion \citep{knapik2011bayesian,knapik2016bayes,gugushvili2020bayesian}; here they play an analogous role. For the concrete families considered in Examples \ref{Ex:DensityEstim}--\ref{Ex:WhiteNoise}, the resulting Hilbert scales coincide, up to equivalent norms, with standard collections of fractional Sobolev spaces on bounded domains, while accommodating any required boundary or identifiability constraints, such as mean-zero conditions.

In this setting, we model the unknown parameter through Gaussian series priors built on the fixed eigenbasis generating the Hilbert scale, with vanishing coordinate variances; see Section \ref{Subsec:Bayes}. The rate of this decay is a central tuning parameter that directly controls the regularity of the prior draws. Priors of this kind form a canonical class for statistical models whose parameter space is a separable Hilbert space \citep[e.g.,][]{belitser2003adaptive,knapik2011bayesian,castillo2013nonparametric,knapik2016bayes,gugushvili2020bayesian}. In our main result, Theorem \ref{Theo:Main}, we show that if the ground truth is in $H_\beta$, the posterior distribution resulting from an $\alpha$-regular Gaussian series prior contracts in $H_s$-norm at rate $n^{-(\alpha\wedge\beta-s)/(2\alpha+d)}$, for all $0\le s < \alpha\wedge\beta$. Here, $d\in\N$ is an effective dimension index encoded by the asymptotic growth of the eigenvalues of the scale generator. For smoothness-matching priors (i.e., $\alpha=\beta$), the obtained $H_s$-rate is thus equal to $n^{-(\beta-s)/(2\beta+d)}$ for all $0\le s <\beta$, coinciding with the usual minimax rate of estimation in Sobolev norms of order $s$ for $\beta$-smooth functions defined on $d$-dimensional domains \citep{stone1982optimal}.

Our proofs build on the novel approach to posterior contraction introduced by \citet{dolera2024strong}, which bypasses testing arguments by directly controlling the expected Wasserstein distance between the posterior distribution and the Dirac mass at the ground truth, with transport cost induced by the metric of interest. This distance is upper bounded by the sum of a deterministic concentration term, governed by the local interaction between the likelihood and the prior near the true parameter, and a stochastic stability term that reflects the sensitivity of the posterior to fluctuations in the data; see Section \ref{Subsec:WassApproach}. Such decomposition then allows these two summands to be treated via powerful analytical and probabilistic tools tailored to their structure. In particular, we control the deterministic term through Laplace-type integral bounds \citep{wong2001asymptotic}, refining the analysis of \citet{dolera2024strong} by replacing their oracle requirement that the prior covariance operator and the Fisher information at the ground truth be simultaneously diagonalisable with a two-sided link condition comparing the latter with a fixed power of the scale generator, cf.~Section \ref{Subsec:DetTerm}. For the stochastic component, we prove a mixed-geometry stability estimate for the posterior kernels arising in infinite-dimensional exponential families; see~Section \ref{Subsec:StochTerm}. This result, which is of independent interest, rests on a Poincaré inequality for the posterior distribution conditioned on a neighbourhood of the truth, which we establish through Galerkin approximation and Brascamp--Lieb-type arguments \citep{bakry2008simple}. The proof carefully decouples the strong geometry of the loss from the weaker geometry in which the sufficient statistic concentrates, thereby avoiding the algebraic loss in the rate incurred by the single-geometry strategy of \citet{dolera2024strong}. See the discussion after Theorem \ref{Theo:Main} for further insights.

In Section \ref{Sec:Applications}, we apply our general theory to the three concrete statistical models introduced in Examples \ref{Ex:DensityEstim}--\ref{Ex:WhiteNoise}. Under smoothness-matching Gaussian series priors, we obtain minimax contraction rates in Sobolev norms of every order (up to the regularity of the ground truth) for density estimation with a logistic parametrisation (Theorem \ref{Theo:DensityPCR}), Poisson intensity estimation with an exponential link (Theorem \ref{Theo:IntensityPCR}), and the Gaussian white-noise model (Theorem \ref{Theo:WhiteNoisePCR}). Consequently, differentiated push-forward posteriors optimally recover the corresponding derivatives of the natural parameter, and, through the employed smooth parametrisation, also the derivatives of the target p.d.f.~or intensity function. The analysis of the white noise model in fact hinges on simpler arguments, whereby conjugacy and coordinate-wise factorisation allow a direct treatment of the deterministic and stochastic terms, recovering known results from the Gaussian sequence and linear inverse problem literature \citep{belitser2003adaptive,knapik2011bayesian,gugushvili2020bayesian}. For density estimation, the obtained results extend the $B$-spline analysis of \citet{shen2017posterior} to Gaussian series priors expanded in standard bases generating the Sobolev scale, such as the Fourier and wavelet bases. In particular, the case $s=1$ yields posterior contraction towards the true score function at the minimax rate $n^{-(\beta-1)/(2\beta+d)}$, or equivalently at the squared rate $n^{-2(\beta-1)/(2\beta+d)}$ relative to Fisher divergence \citep{wibisono2024optimal}. To the best of our knowledge, the application to Poisson processes provides the first optimal contraction rates for derivatives of a nonparametric intensity function. 

%
%
%

\subsection{Basic notation and organisation of the paper}

	For a given measure space $(\Xcal,\mathfrak{X},\mu)$ and a given normed vector space $(\Vcal,\|\cdot\|_\Vcal)$, let $L^p(\Xcal,\mu;$ $\Vcal)$, $p\in [1,\infty]$, be the usual Lebesgue spaces of $\Vcal$-valued functions on $\Xcal$ with integrable $p^\textnormal{th}$-power, equipped with norm $\|h\|^p_{L^p(\Xcal,\mu;\Vcal)}:=\int_{\Xcal}\|h(x)\|_\Vcal^p d\mu(x)$, $h\in L^p(\Xcal,\mu;\Vcal)$, replaced by the essential supremum of $\|h\|_\Vcal$ over $\Xcal$ if $p=\infty$. For open $\Xcal\subseteq\R^m$, we denote by $C(\Xcal)$ the space of bounded continuous real-valued functions defined on $\Xcal$, equipped with the supremum norm. For positive integers $\alpha\in\N$, let $C^\alpha(\Xcal)$ denote the space of $\alpha$-times continuously differentiable real-valued functions defined on $\Xcal$ whose partial derivatives up to order $\alpha$ are bounded, equipped with the norm $\|\cdot\|_{C^\alpha}$. Further, let $H^\alpha(\Xcal)$ be the usual Hilbert-Sobolev space of functions with square-integrable partial derivatives up to order $\alpha$, with norm $\|\cdot\|_{H^\alpha}$. When no confusion may arise, we omit the dependence of the function spaces on the domain.

	We denote the minimum and maximum between real numbers $a,b\in\R$ by $a\land b$ and $a\vee b$, respectively, and use the symbols $\lesssim, \ \gtrsim$ and $\asymp$ for one- and two-sided inequalities holding up to multiplicative constants.  For two real sequences $(a_n)_{n\ge1}$, $(b_n)_{n\ge1}$, we write $a_n=o(b_n)$ if $a_n/b_n\to0$ as $n\to\infty$, and $a_n=O(b_n)$ if $a_n/b_n\lesssim 1$ for all sufficiently large $n$.

The rest of this article is organised as follows. Section \ref{Sec:Setting} introduces Hilbert scales, infinite-dimensional exponential families and the Bayesian setup. Section \ref{Sec:GenPostRates} outlines the Wasserstein dynamic approach to posterior contraction rates, provides all model assumptions and states the main theorem. Section \ref{Sec:Applications} is devoted to the applications to three concrete statistical models. Section \ref{Sec:Proofs} collects the proof of the main result.

%
%
%
%
%

\section{Infinite-dimensional exponential families in Hilbert scales}
\label{Sec:Setting}

In this section we recall the definition of infinite-dimensional exponential families \citep{pistone1995infinite,canu2006kernel,fukumizu2009exponential}, casting it within the framework of Hilbert scales \citep{krein1966scales,natterer1984error,engl1996regularization}, where the notion of Sobolev norms and all model assumptions are made precise.

%
%
%

\subsection{Hilbert scales}
\label{Subsec:HilbertScales}

Throughout, we assume the existence of a reference separable Hilbert space $H_0$, with inner product $\langle\cdot,\cdot\rangle_{H_0}$, norm $\|\cdot\|_{H_0}$ and orthonormal basis $\{e_k\}_{k=1}^\infty$, and fix a nondecreasing sequence of strictly positive numbers $\{\lambda_k\}_{k=1}^\infty$ with $\lambda_k\to\infty$. For $s\ge 0$, define the Hilbert space
$$
	H_s := \left\{ h = \sum_{k=1}^\infty h_k e_k :
	\|h\|_{H_s}^2 := \sum_{k=1}^\infty \lambda_k^{2s} h_k^2 <\infty\right\},
	\qquad
	\langle h, g\rangle_{H_s} := \sum_{k=1}^\infty \lambda_k^{2s}h_k g_k.
$$
For $s<0$ let $H_s$ be the completion of $H_0$ under $\|\cdot\|_{H_s}$, with $\|\cdot\|_{H_s}$ formally defined as above; equivalently, $H_s$ is the topological dual of $H_{-s}$ under the $\langle\cdot,\cdot\rangle_{H_0}$-pairing. The family $\{H_s\}_{s\in\R}$ is the \emph{Hilbert scale} generated by the pairs $\{(e_k,\lambda_k)\}_{k=1}^\infty$; see, e.g., \citet{krein1966scales,natterer1984error,engl1996regularization}. For every $s\in\R$, an orthonormal basis of $H_s$ is given by $\{\lambda_k^{-s}e_k\}_{k=1}^\infty$. We recall standard properties used repeatedly in the following: 
\begin{itemize}
	\item for $s<r$, $H_r$ is a dense and compactly embedded subspace of $H_s$;
	\item for $s_1\le s\le s_2$ with $s = (1-a)s_1+a s_2$, $a\in[0,1]$, we have the interpolation inequality
	\begin{equation}
	\label{Eq:Interp}
		\|h\|_{H_s}\le \|h\|_{H_{s_1}}^{1-a}\|h\|_{H_{s_2}}^{a},
		\qquad h\in H_{s_2}.
	\end{equation}
\end{itemize}

Equivalently, the Hilbert scale is generated by the (unbounded, densely defined) self-adjoint linear operator \begin{equation}
\label{Eq:ScaleGenerator}
	L e_k = \lambda_k e_k, \qquad k\in\N,
\end{equation}
on $H_0$, with domain $D(L)=H_1$. The real powers of $L$ are defined through spectral calculus: for $r\in\R$, $L^r e_k := \lambda_k^{r}e_k$, with domain $D(L^r)=\{h\in H_0:\sum_{k=1}^\infty\lambda_k^{2r}h_k^2<\infty\}$. Then, for any $s\in\R$, $L^r:H_s\to H_{s-r}$ is an isometric isomorphism; in particular, for $r>0$, one has $D(L^r) = H_r$ and $\|h\|_{H_r}=\|L^r h\|_{H_0}$, whereas the restriction of $L^{-r}$ to $H_0$ defines a bounded (in fact, compact) operator. In the following, we assume the polynomial growth
\begin{equation}
\label{Eq:PolyGrowth}
	\lambda_k \asymp k^{1/d}, \qquad \text{as}\ k\to\infty,
	\qquad\text{for some}\ d\in\N,
\end{equation}
matching, in view of Weyl's law \citep[e.g.,][Chapter 8]{taylor2011partial2}, the eigenvalue asymptotics of positive elliptic (pseudo-)differential operators of order one on $d$-dimensional domains. This recovers canonical scales of Hilbert-Sobolev spaces \citep{evans1998partial}, possibly under appropriate boundary conditions and up to equivalent norms.

\begin{example}[{Periodic Sobolev spaces}]
\label{Ex:TorusScale}
On the $d$-dimensional torus $\T^d$, interpreted as the unit hypercube $[0,1)^d$ with opposite endpoints identified, we take as reference $H_0=L^2(\T^d)$, the space of $1$-periodic square-integrable functions, with orthonormal basis given by the Fourier system $e_\xi(\cdot) := e^{2\pi i\langle \xi,\cdot\rangle}, \ \xi\in\Z^d$, namely the eigenfunctions of the Laplacian on $\T^d$,
\begin{equation}
\label{Eq:PeriodicLapl}
	-\Delta e_\xi := -\sum_{j=1}^d \frac{\partial^2}{\partial x_j^2}\, e_\xi = (2\pi|\xi|)^2 e_\xi,
	\qquad e_\xi(\cdot+m) = e_\xi(\cdot),
	\qquad m\in\Z^d,
	\qquad \xi\in\Z^d.
\end{equation}
Noting that the constant eigenfunction $e_0$ lies in the kernel of $-\Delta$, we take the positive order-one operator $L=(I-\Delta)^{1/2}$ as the scale generator, with eigenvalues $(1+(2\pi|\xi|)^2)^{1/2}\ge1$. Enumerating the frequencies by increasing modulus as $\{\xi_k\}_{k\ge0}$, the lattice-point count $|\{\xi\in\Z^d:|\xi|\le u\}|\asymp u^d$ gives $\lambda_k = (1+(2\pi|\xi_k|)^2)^{1/2}\asymp k^{1/d}$, so that the polynomial growth \eqref{Eq:PolyGrowth} holds with $d$ the dimension of $\T^d$. The induced scale is the periodic Hilbert-Sobolev scale $H_s = H^s(\T^d)$, $s\in\R$, with
$$
	\|h\|_{H^s}^2 = \sum_{\xi\in\Z^d} (1+(2\pi|\xi|)^2)^{s}\,|\hat h(\xi)|^2 = \big\|(I-\Delta)^{s/2}h\big\|_{L^2}^2 ,
	\qquad \hat h(\xi) := \langle h,e_\xi\rangle_{L^2}.
$$
For nonnegative integers $s\in\N_0$, $H^s(\T^d)$ coincides, with equivalent norms, with the standard periodic Sobolev space $W^{s,2}(\T^d)$ comprising all functions $h$ with partial derivatives $D^{\alpha}h\in L^2(\T^d)$ for all multi-indices $\alpha = (\alpha_1,\dots,\alpha_d)\in\N_0^d$ such that $|\alpha|\le s$, normed by $\|h\|^2_{W^{s,2}} = \sum_{|\alpha|\le s}\|D^{\alpha}h\|_{L^2}^2$. This follows from differentiating term by term the Fourier series representation of any function $h\in H^s(\T^d)$. In fact, for $s=0,1$, the norms coincide.
\end{example}

General boundary behaviours can be accommodated through other bases, for example ones of boundary-corrected wavelets.

\begin{example}[{Wavelet Sobolev scale on $[0,1]^d$}]
\label{Ex:WaveletScale}
Let $\{\psi_k\}_{k=1}^\infty$ be a single-index reordering by increasing resolution level of a tensor-product orthonormal wavelet basis of $L^2([0,1]^d)$, built from $S$-regular ($S\in\N$), compactly supported, boundary-corrected Daubechies wavelets \citep[see, e.g.,][Chapter 4.3]{gine2016mathematical}. Setting $\lambda_k := k^{1/d}$, the induced scale $H_s = H^s([0,1]^d)$, normed by
$$
	\|h\|_{H^s}^2 = \sum_{k=1}^\infty k^{2s/d}\,|\langle h,\psi_k\rangle_{L^2}|^2,
$$
satisfies, in view of the wavelet characterisation of standard Sobolev spaces, $H^s([0,1]^d) = W^{s,2}([0,1]^d)$ and $\|\cdot \|_{H^s}^2\asymp \|\cdot\|_{W^{s,2}}^2$ for all integers $0\le s\le S$, cf.~\cite[Chapter 4.3]{gine2016mathematical}.
\end{example}

Depending on the statistical model at hand, further subspaces contained within the underlying Hilbert scale can be singled out through the series representation. For example, as the constant function is a basis element in both Examples \ref{Ex:TorusScale} and \ref{Ex:WaveletScale}, corresponding respectively to the zero-frequency mode $e_0$ and the coarsest scaling function $\psi_1$, removing it imposes the mean-zero constraint and gives rise to the associated scales of homogeneous spaces $\dot H^s(\T^d)$ and $\dot H^s([0,1]^d)$, $s\in\R$. This provides natural parameter spaces for models identifiable only up to an additive constant, such as density estimation under a logistic parametrisation; see Example \ref{Ex:DensityEstim}.

%
%
%

\subsection{Infinite-dimensional exponential families}
\label{Subsec:Model}

Fix $p\ge0$ and take as parameter space the Hilbert space $\Theta := H_p$. Throughout, we denote $\langle\cdot,\cdot\rangle_\Theta := \langle\cdot,\cdot\rangle_{H_p}$ and $\|\cdot\|_\Theta := \|\cdot\|_{H_p}$. Let $(\X,\Xcal,\lambda)$ be a $\sigma$-finite measure space and let $\beta:\X\to\Theta^* = H_{-p}$ be a measurable map. Following \citet{pistone1995infinite,canu2006kernel,fukumizu2009exponential} (see also Definition 3 of \citealp{dolera2024strong}), the associated \emph{regular infinite-dimensional exponential family} is the collection of $\lambda$-densities
\begin{equation}
\label{Eq:ExpFamily}
	f_\theta(x) := \exp\big\{\beta_x(\theta) - M(\theta)\big\},
	\quad
	M(\theta):=\log\int_\X e^{\beta_x(\theta)}\,d\lambda(x),
	\quad x\in\X,
	\quad \theta\in\Theta,
\end{equation}
where $\beta_x(\theta)$ denotes the action of the linear operator $\beta_x\in \Theta^*$ on $\theta\in\Theta$, and the log-partition (or cumulant generating) function $M$ is assumed to be finite for all $\theta\in\Theta$. The family \eqref{Eq:ExpFamily} is expressed in natural form, with natural parameter space equal to $\Theta$.

We assume we are given a random sample $X^{(n)}:=(X_1,\dots,X_n)$ of independent and identically distributed (i.i.d.) $\X$-valued random variables drawn from an unknown probability density function (p.d.f.) belonging to the exponential family,
\begin{equation}
\label{Eq:Data}
	X_1,\dots,X_n \iid f_{\theta},
\end{equation}
for some $\theta\in\Theta$ and $f_\theta$ as in \eqref{Eq:ExpFamily}. The goal is then to estimate $\theta$ from observations $X^{(n)}$. We denote by $d\mu_\theta := f_{\theta}\,d\lambda$, $\lambda^{(n)} :=\lambda^{\otimes n}$, and write $\mu_\theta^{(n)}:=\mu_\theta^{\otimes n}$ for the data distribution and $E_\theta^{(n)}$ for the expectation with respect to $\mu_\theta^{(n)}$. The likelihood equals
\begin{equation}
\label{Eq:Likelihood}
	L^{(n)}(\theta) := \frac{d\mu_\theta^{(n)}}{d\lambda^{(n)}}(X^{(n)}) 
	= \exp\left\{ n\left[\hat T_n(\theta) 
	- M(\theta)\right]\right\},
	\qquad \theta\in\Theta,
\end{equation}
with $\Theta^*$-valued sufficient statistic
\begin{equation}
\label{Eq:SuffStat}
	\hat T_n := \frac1n\sum_{i=1}^n\beta_{X_i}.
\end{equation}

Infinite-dimensional exponential families provide a flexible yet analytically tractable framework for nonparametric inference, encompassing several widely used statistical models that are recovered through different choices of the map $\beta$ and dominating measure $\lambda$. Below, we concretise our theoretical results on the following three canonical examples from statistical theory; see Section \ref{Sec:Applications} for full details.

\begin{example}[{Density estimation on $[0,1]^d$}]
\label{Ex:DensityEstim}
We observe a random sample $X_1,\dots,X_n\iid f$ for some unknown p.d.f.~$f:[0,1]^d\to(0,\infty)$, modelled through the logistic parametrisation $f_\theta = e^\theta/\int_{[0,1]^d} e^{\theta(x)}\,dx$ for some $\theta:[0,1]^d\to\R$. Since the latter is identifiable only up to an additive constant, we take $\Theta$ to be a mean-zero wavelet Sobolev space $\dot {H}^p([0,1]^d)$ for some $p>d/2$, constructed as in the remark after Example \ref{Ex:WaveletScale}. In view of the Sobolev embedding \citep[e.g.,][Chapter 4.3]{gine2016mathematical}, $\Theta$ is continuously embedded into the space of continuous functions $C([0,1]^d)$. It follows that for each $x\in[0,1]^d$ the point evaluation functional $\delta_x$ is linear and continuous on $\Theta$, and we recover an infinite dimensional exponential family with $\beta_x = \delta_x$ and $M(\theta)=\log\int_{[0,1]^d} e^{\theta(x)}\,dx$.
\end{example}

\begin{example}[{Poisson intensity estimation on $[0,1]^d$}]
\label{Ex:IntensityEstim}
We observe $n$ independent copies $N_1,\dots,N_n$ of an inhomogeneous Poisson point process $N$ on $[0,1]^d$ with unknown intensity function $\rho:[0,1]^d\to(0,\infty)$, modelled through the exponential link $\rho = e^\theta$ for some $\theta:[0,1]^d\to\R$. That is, $N = \sum_{j=1}^J\delta_{X_j}$, arising as
$$
	J\sim \text{Po}\left(\int_{[0,1]^d}e^{\theta(x)}dx\right),
	\qquad 
	X_1,\dots,X_J|J\iid \frac{e^{\theta}dx}{\int_{[0,1]^d}e^{\theta(x)}dx}.
$$
For any measurable and bounded function $\theta\in L^\infty([0,1]^d)$, the law $\mu_\theta$ of $N$ is absolutely continuous with respect to the distribution $\mu_0$ of the unit-intensity Poisson process (i.e., with $\theta\equiv 0$), with Radon-Nikodym derivative \citep[][Chapter 1]{kutoyants1998statistical},
$$
	\frac{d\mu_\theta}{d\mu_0}(N) = \exp\left\{\sum_{j=1}^{J}\theta(X_j) 
	- \int_{[0,1]^d}\left[e^{\theta(x)}-1\right]\,dx\right\}.
$$
Here, $\theta$ is fully identifiable, and we take $\Theta$ to be a wavelet Sobolev space $H^p([0,1]^d)$  from Example \ref{Ex:WaveletScale}, with $p>d/2$. Arguing similarly to Example \ref{Ex:DensityEstim}, we then recover an infinite-dimensional exponential family with $\beta_N := \sum_{j=1}^J\delta_{X_j} = N$ and $M(\theta)=\int_{[0,1]^d} [e^{\theta(x)}-1]\,dx.$
\end{example}

\begin{example}[{Signal in white noise on $[0,1]^d$}]
\label{Ex:WhiteNoise}
We observe an unknown signal $\theta\in L^2([0,1]^d)$ corrupted by additive Gaussian white noise at level $n^{-1/2}$, that is, a realisation of the Gaussian process $Y^{(n)}$ given by
$$
	Y^{(n)}(h) = \langle\theta,h\rangle_{L^2} + \frac{1}{\sqrt n}\,\mathbb{W}(h),
	\qquad h\in L^2([0,1]^d),
$$
where $\mathbb{W}$ is a white noise process indexed by $L^2([0,1]^d)$, with $\mathbb{W}(h)\sim \Ncal(0,\|h\|_{L^2}^2)$ and $\mathrm{Cov}(\mathbb{W}(h),\mathbb{W}(g))=\langle h,g\rangle_{L^2}$ \citep[Chapter 1]{gine2016mathematical}. Equivalently, given the wavelet basis $\{\psi_k\}_{k=1}^\infty$ of Example \ref{Ex:WaveletScale}, one observes the Gaussian sequence
$$
	Y_k^{(n)} := Y^{(n)}(\psi_k) = \langle\theta,\psi_k\rangle_{L^2} + n^{-1/2}Z_k,
	\qquad Z_k\iid \Ncal(0,1).
$$

For any $\theta\in L^2([0,1]^d)$, the law $\mu_\theta$ of $Y^{(n)}$ is absolutely continuous with respect to the white noise distribution (corresponding to $\theta\equiv0$), with Radon-Nikodym derivative \citep[Section 6.1.1]{gine2016mathematical}
$$
	\frac{d\mu_\theta}{d\mu_0}(Y^{(n)}) 
	= \exp\left\{ n\left[Y^{(n)}(\theta) - 
	\tfrac12\|\theta\|_{L^2}^2\right]\right\}.
$$
This formally recovers an infinite-dimensional exponential family, with parameter space $\Theta = L^2([0,1]^d)$, sufficient statistic $\hat T_n = Y^{(n)}$ and $M(\theta) = \tfrac12\|\theta\|_{L^2}^2$. Unlike the sampling models of Examples \ref{Ex:DensityEstim} and \ref{Ex:IntensityEstim}, we note that the white-noise experiment is not generated by i.i.d.~draws, and that the sufficient statistic $\hat T_n$ only takes values in the dual Sobolev space $H^{-s}([0,1]^d)$ for $s>d/2$ \citep[Section 4.4]{gine2016mathematical}, and thus in particular not in $\Theta$. The general techniques for the analysis of the infinite-dimensional exponential family \eqref{Eq:Data} however readily extend to this case, cf.~Section \ref{Sec:Applications}.
\end{example}

%
%
%

\subsection{The Bayesian approach}
\label{Subsec:Bayes}

We adopt the Bayesian approach to the problem of estimating the unknown parameter $\theta$ from data $X^{(n)}$ arising as in \eqref{Eq:Data}, modelling $\theta$ via a (prior) Borel probability measure $\Pi$ on $\Theta$. Note that the log-partition function $M$ in \eqref{Eq:ExpFamily} is convex and lower semicontinuous on $\Theta$ \citep[cf.][Theorems 1.13, 2.2 and 2.7]{brown1986fundamentals}; hence, $M$ is Borel measurable, and so is the likelihood $L^{(n)}$ in \eqref{Eq:Likelihood}. By Bayes' formula \citep[e.g.,][p.~7]{ghosal2017fundamentals}, the resulting posterior distribution then equals
\begin{equation}
\label{Eq:Posterior}
	\Pi(A|X^{(n)})
	=\frac{\int_A e^{n[\hat T_n(\theta) - M(\theta)]}\,d\Pi(\theta)}
	{\int_\Theta e^{n[\hat T_n(\theta) - M(\theta)]}\,d\Pi(\theta)},
	\qquad A\subseteq\Theta \ \text{measurable}.
\end{equation}
In writing \eqref{Eq:Posterior} we implicitly assume the denominator to be finite, namely that
\begin{equation}
\label{Eq:NormConst}
	\int_\Theta e^{n[\tau(\theta) - M(\theta)]}\,d\Pi(\theta)<\infty,
	\qquad \text{for all}\ n\in\N\ \text{and}\ \tau\in\Theta^*.
\end{equation}
This quantity is strictly positive in view of the model assumption $M(\theta)<\infty$ for every $\theta\in\Theta$; hence, $\Pi(\cdot|X^{(n)})$ in \eqref{Eq:Posterior} is well defined under \eqref{Eq:NormConst}. For $\tau\in\Theta^*$, we use the notation $\Pi_n^*(\cdot|\tau)$ for the probability measure defined by the right hand side of \eqref{Eq:Posterior} with $\hat T_n$ replaced by $\tau$. Accordingly, $\Pi(\cdot|X^{(n)}) = \Pi_n^*(\cdot|\hat T_n)$.

In the following, we investigate the asymptotic behaviour of the posterior distribution as the sample size $n\to\infty$, under the frequentist assumption that the data have been generated by some fixed \emph{ground truth} $\theta_0\in\Theta$, namely $X^{(n)}\sim \mu_{\theta_0}^{(n)}$. We characterise the speed of concentration of $\Pi(\cdot|X^{(n)})$ around $\theta_0$ according to the usual notion of \emph{posterior contraction rates}, that is, positive real sequences $\varepsilon_n\to0$ as $n\to\infty$ such that
\begin{equation}
\label{Eq:PCRDef}
	E_{\theta_0}^{(n)}\left[\Pi\big(\theta\in\Theta:\, \ell(\theta,\theta_0)>M_n\varepsilon_n \,\big|\, X^{(n)}\big)\right]
	\to 0,
	\qquad \text{as}\ n\to\infty,
\end{equation}
for every $M_n\to\infty$, where $\ell$ is a (semi-)metric on $\Theta$. See \citet{ghosal2017fundamentals} for an in-depth treatment of the frequentist analysis of nonparametric Bayesian procedures. In particular, we are interested in posterior contraction rates relative to the Hilbert scale norms introduced in Section \ref{Subsec:HilbertScales}, taking $\ell(\theta_1,\theta_2) = \|\theta_1-\theta_2\|_{H_s}$ for suitable $s\ge0$. For the concrete scales of Examples \ref{Ex:TorusScale} and \ref{Ex:WaveletScale}, the resulting metrics are Sobolev norms, so that contraction in $\|\cdot\|_{H_s}$ quantifies the recovery of $\theta_0$ jointly with its derivatives up to order $s$, the case $s=0$ corresponding to the usual $L^2$-distance.

The concentration in strong norms of Bayesian procedures for infinite-dimensional exponential families has been recently investigated by \citet{dolera2024strong}. Their analysis however hinges on oracle-type Gaussian priors calibrated to features of the unknown ground truth, and only results in suboptimal rates. An earlier contribution, developing Bernstein--von Mises-type approximations, is by \citet{shang2018gaussian}. Results for frequentist estimators and tests in this framework were derived by \citet{fukumizu2009exponential,gretton2012kernel,sriperumbudur2017density}, among the others.

There is an extensive nonparametric Bayesian literature for the concrete models introduced in Examples \ref{Ex:DensityEstim}--\ref{Ex:WhiteNoise}. Landmark contributions include \citet{scricciolo2006convergence,vaart2008rates,rivoirard2012posterior} for density estimation under the logistic parametrisation, \citet{belitser2015rate,kirichenko2015optimality,donnet2017posterior} for Poisson intensity estimation, and \citet{belitser2003adaptive,knapik2011bayesian,castillo2013nonparametric} for the white noise model. We further point to the monograph by \citet{ghosal2017fundamentals}, where many further references can be found.

%
%
%
%
%

\section{Posterior contraction rates by Wasserstein dynamics}
\label{Sec:GenPostRates}

We provide our main result concerning the concentration in Hilbert scale norms of the posterior distribution $\Pi(\cdot|X^{(n)})$ in \eqref{Eq:Posterior} around the ground truth $\theta_0$ generating the data \eqref{Eq:Data}. Our analysis is based on the novel \emph{Wasserstein dynamics} approach to posterior contraction rates in general metric spaces recently developed by \citet{dolera2024strong}, which we significantly refine in the context of infinite-dimensional exponential families. To clarify the model assumptions underlying the abstract theorem, we outline the proof argument in the next section.

%
%
%

\subsection{The Wasserstein dynamics approach to posterior contraction rates}
\label{Subsec:WassApproach}

The starting point is a general upper bound for posterior contraction rates expressed in terms of the Wasserstein distance between probability measures. Recall $\Theta = H_p$ for some $p\ge0$, and let $s\ge0$ be the index of the target Hilbert scale norm for posterior contraction. Throughout, we assume that $\theta_0\in H_\beta$ for some $\beta > s\vee p$, and that the prior (and hence the posterior) charge $H_s\cap \Theta = H_{s\vee p}$ with probability one. Let $\Pcal_2(H_s)$ be the collection of all Borel probability measures $\nu$ on $H_s$ satisfying
\begin{equation}
\label{Eq:Integr}
	\int_{H_s} \|\theta - \theta^*\|_{H_s}^2d\nu_(\theta)<\infty,
	\qquad \text{for some}\ \theta^*\in H_s,
\end{equation}
and consider the $2$-\emph{Wasserstein distance}
\begin{equation}
\label{Eq:WassDist}
	\Wcal_2^{\Pcal_2(H_s)}(\nu_1,\nu_2)
	:= \sqrt{\inf_{\gamma\in\mathcal F(\nu_1,\nu_2)} 
	\int_{H_s\times H_s}\|\theta_1-\theta_2\|_{H_s}^2\,d\gamma(\theta_1,\theta_2)},
\end{equation}
the infimum running over the class $\mathcal F(\nu_1,\nu_2)$ of all couplings of $\nu_1$ and $\nu_2$, that is, Borel probability measures on $\Theta\times\Theta$ with marginals $\nu_1$ and $\nu_2$; see \citep[][Chapter 7]{ambrosio2008gradient} for details. Under the above requirements and a mild integrability condition ensuring that the prior and the posterior satisfy \eqref{Eq:Integr}, Lemma 1 of \citet{dolera2024strong} shows by simple arguments that the quantity
\begin{equation}
\label{Eq:WassPCR}
	\varepsilon_n := E_{\theta_0}^{(n)}\big[\Wcal_2^{\Pcal_2(H_s)}\big(\Pi(\cdot|X^{(n)}),\delta_{\theta_0}\big)\big],
\end{equation}
where $\delta_{\theta_0}$ denotes the Dirac mass at $\theta_0$, is an upper bound for the $H_s$-posterior contraction rate at $\theta_0$, in that \eqref{Eq:PCRDef} holds for every $M_n\to\infty$ as $n\to\infty$. We then proceed to bound $\varepsilon_n$ in \eqref{Eq:WassPCR} through a combination of analytical and probabilistic techniques.

First, recall from Section \ref{Subsec:Bayes} that $\Pi(\cdot|X^{(n)}) = \Pi_n^*(\cdot|\hat T_n)$ with $\hat T_n = \tfrac1n\sum_{i=1}^n\beta_{X_i}$ the sufficient statistic from \eqref{Eq:SuffStat}. For all $\theta\in\Theta$ such that $E_{\theta}^{(1)}[\|\beta_{X_1}\|_{\Theta^*}]<\infty$, denote 
\begin{equation}
\label{Eq:MeanMap}
	T_\theta:= E^{(1)}_{\theta} [\beta_{X_1}] = \int_\X \beta_x f_{\theta}(x) d\lambda (x),
\end{equation}
where the above integral is in the Bochner sense and belongs to $\Theta^*$. Then, provided that $E_{\theta_0}^{(1)}[\|\beta_{X_1}\|_{\Theta^*}]<\infty$, the strong law of large numbers in Hilbert spaces \citep[][Corollary 7.10]{ledoux1991probability} implies that $\|\hat T_n - T_{\theta_0}\|_{\Theta^*}\to0$ $\mu_{\theta_0}^{(\infty)}$-almost surely as $n\to\infty$. This suggests that the asymptotic behaviour of $\Pi_n^*(\cdot|\hat T_n)$ should be closely related to that of the deterministic measure $\Pi_n^*(\cdot|T_{\theta_0})$ obtained upon replacing $\hat T_n$ in \eqref{Eq:Posterior} by its limit $T_{\theta_0}$, as long as the Markov kernel $\Pi_n^*(\cdot|\tau)$, $\tau\in\Theta^*$, is stable with respect to its second argument in the relevant geometry. Based on this observation, we apply the triangle inequality for the Wasserstein distance to upper bound $\varepsilon_n$ in \eqref{Eq:WassPCR} by
\begin{equation}
\label{Eq:WassDecomp}
\begin{split}
	\varepsilon_n&\le\Wcal_2^{\Pcal_2(H_s)}\big(\Pi_n^*(\cdot|T_{\theta_0}),\delta_{\theta_0}\big)
	\;+\;
	E_{\theta_0}^{(n)}\Big[\Wcal_2^{\Pcal_2(H_s)}\big(\Pi_n^*(\cdot|T_{\theta_0}),\Pi_n^*(\cdot|\hat T_n)\big)\Big].
\end{split}
\end{equation}

The first summand in \eqref{Eq:WassDecomp} is a purely \emph{deterministic term} that measures the speed of concentration of $\Pi_n^*(\cdot|T_{\theta_0})$ at $\theta_0$. Its analytical evaluation requires the study of the asymptotics of the ratio between two vanishing integrals, cf.~\eqref{Eq:KLRepres} below, which we tackle via an infinite-dimensional extension of the classical Laplace methods for integral approximation developed by \citet{dolera2024strong}. See Section \ref{Subsec:DetTerm}. This hinges on rewriting $\Pi_n^*(\cdot|T_{\theta_0})$ in terms of the Kullback--Leibler divergence
\begin{equation}
\label{Eq:KL}
	K(\vartheta|\theta) := \int_\X \log\frac{f_{\theta}(x)}{f_\vartheta(x)}\,f_{\theta}(x)\,d\lambda(x),
	\qquad \vartheta,\theta\in\Theta.
\end{equation}
By \eqref{Eq:ExpFamily}, $\log[f_{\theta}(x)/f_\vartheta(x)] = \beta_x(\theta-\vartheta) + M(\vartheta)-M(\theta)$. Integrating against $f_{\theta}\,d\lambda$ and recalling the definition of $T_\theta$ in \eqref{Eq:MeanMap} yields
\begin{equation}
\label{Eq:Bregman}
	K(\vartheta|\theta) 
	= M(\vartheta)-M(\theta) - \int_\X  \beta_x(\vartheta-\theta) \,f_{\theta}(x)\,d\lambda(x)
	= M(\vartheta)-M(\theta)-T_\theta(\vartheta-\theta),
\end{equation}
the properties of Bochner integrals allowing to exchange the order of integration and application of linear functionals in $\Theta^*$. Thus, $-nK(\theta|\theta_0) = n[T_{\theta_0}(\theta)-M(\theta)] + n[M(\theta_0)-T_{\theta_0}(\theta_0)]$ for any $\theta\in\Theta$, and since the second term does not depend on $\theta$, recalling \eqref{Eq:Posterior}, we obtain
$$
	d\Pi_n^*(\theta|T_{\theta_0})
	=\frac{e^{n[T_{\theta_0}(\theta) - M(\theta)]}}
	{\int_{H_{s\vee p}} e^{n[T_{\theta_0}(\theta) - M(\theta)]}\,d\Pi(\theta)}\,d\Pi(\theta),
	\qquad \theta\in H_{s\vee p},
$$
and finally,
\begin{equation}
\label{Eq:KLRepres}
\begin{split}
	\Wcal_2^{\Pcal_2(H_s)}\big(\Pi_n^*(\cdot|T_{\theta_0}),\delta_{\theta_0}\big)
	&=\sqrt{ \int_{H_{s\vee p}} \|\theta - \theta_0\|_{H_s}^2\,d\Pi_n^*(\theta|T_{\theta_0})}\\
	&=\sqrt{ \frac{\int_{H_{s\vee p}} \|\theta-\theta_0\|_{H_s}^2 e^{-nK(\theta|\theta_0)}\,d\Pi(\theta)}
	{\int_{H_{s\vee p}} e^{-nK(\theta|\theta_0)}\,d\Pi(\theta)} }.
\end{split}
\end{equation}

The second summand in \eqref{Eq:WassDecomp} is a \emph{stochastic term} that measures the effect on the posterior of the fluctuation of $\hat T_n$ around $T_{\theta_0}$. Its analysis rests on two ingredients. The first is a localisation: the expectation is split over an event of probability tending to one, on which the fluctuation of the sufficient statistic is small and the posterior charges a fixed neighbourhood of $\theta_0$ up to a superpolynomially small remainder, and over its complement, where a crude second-moment bound suffices. The second is a stability estimate for the posterior kernel, obtained by connecting $T_{\theta_0}$ and $\hat T_n$ through the path $\tau_t=T_{\theta_0}+t(\hat T_n-T_{\theta_0})$, $t\in[0,1]$, and using the dynamic formulation of the Wasserstein distance \citep{benamou2000computational} to estimate the length in $\Pcal_2(H_s)$ of the corresponding curve $\{\Pi_n^*(\cdot\mid\tau_t)\}_{t\in[0,1]}$ of posterior kernels, conditioned on that neighbourhood. The resulting bound builds on the stability estimates of \citet{dolera2020uniform,dolera2023lipschitz} and refines the analysis of \citet{dolera2024strong} by exploiting, within the transport problem underlying the Wasserstein dynamics, the joint structure of the exponential family and the Hilbert scale. See Section \ref{Subsec:StochTerm} for details. 

%
%
%

\subsection{A general posterior contraction rate theorem for Gaussian priors}
\label{Subsec:GenThm}

We are now in a position to state and prove our main posterior contraction rate theorem. We assume that $\theta_0\in H_\beta$ for some $\beta>p$, and consider the centred Gaussian series prior $\Pi$ on the orthonormal basis $\{e_k\}_{k=1}^\infty$ of $H_0$ defined, for a given prior regularity parameter $\alpha>p$, as the law of
\begin{equation}
\label{Eq:Prior}
	\theta = \sum_{k=1}^\infty \lambda_k^{-\alpha-d/2}\,t_k\,e_k,
	\qquad t_k\iid \Ncal(0,1).
\end{equation}
The deterministic scaling in \eqref{Eq:Prior} calibrates the prior regularity within the Hilbert scale. Indeed, in view of the assumed eigenvalue asymptotics \eqref{Eq:PolyGrowth}, $E[\|\theta\|_{H_s}^2] = \sum_{k=1}^\infty \lambda_k^{2(s-\alpha)-d}$ $<\infty$ if and only if $s<\alpha$; accordingly, the draws
from $\Pi$ belong almost surely to $H_{s}$ for every $s<\alpha$, while they almost surely belong to no $H_s$ with $s\ge\alpha$. As customary, we then call $\Pi$ an \emph{$\alpha$-regular Gaussian series prior}. In particular, $\Pi$ is supported on $\Theta = H_p$ precisely by virtue of the requirement $\alpha>p$.

Equivalently, for $L$ the scale generator from \eqref{Eq:ScaleGenerator}, $\Pi$ defines a centred Gaussian Borel probability measure on the ambient space $H_0$, with covariance operator $Q_\Pi = L^{-2\alpha-d}$. The associated reproducing kernel Hilbert space (RKHS, or Cameron--Martin space) is given by $H_\Pi = H_{\alpha+d/2}$, with $\|h\|_{H_\Pi}^2 = \langle h,Q^{-1}_\Pi h\rangle_{H_0}= \|h\|_{H_{\alpha+d/2}}^2$. We refer to \citep[][Chapter 2]{gine2016mathematical} for definitions and background properties of Gaussian processes and measures used throughout.

For $\theta_0$ and $\Pi$ as above, we pursue $H_s$-posterior contraction rates via the Wasserstein dynamics approach outlined in Section \ref{Subsec:WassApproach}. The following model assumptions formalise the conditions under which explicit, matching algebraic rates can be obtained for the deterministic and stochastic terms appearing in the upper bound \eqref{Eq:WassDecomp}. These are formulated relative to the abstract infinite-dimensional exponential family \eqref{Eq:ExpFamily}, and are later concretised in the three statistical models introduced in Examples \ref{Ex:DensityEstim}--\ref{Ex:WhiteNoise}.

%
%
%

\subsubsection{Model assumptions}
\label{Subsec:ModelAss}

The main assumptions are formulated in terms of the \emph{Fisher information} $I$, defined, for every $\theta\in\Theta$ for which the mean $T_\theta$ in \eqref{Eq:MeanMap} exists, as the nonnegative (extended) quadratic form
\begin{equation}
\label{Eq:FisherInfo}
	I(\theta)[h] := Var^{(1)}_\theta\big(\beta_{X_1}(h)\big)
	= E^{(1)}_\theta\Big[\big(\beta_{X_1}(h)-T_\theta(h)\big)^2\Big]\in[0,\infty],
	\qquad h\in\Theta.
\end{equation}
At any point $\theta\in\Theta$ where the log-partition function $M$ from \eqref{Eq:ExpFamily} is twice (Fr\'echet) differentiable, the identity \eqref{Eq:Bregman} identifies $T_\theta$ and $I(\theta)$ with the first two derivatives of $M(\theta)$, respectively,
$$
	D M(\theta)=T_\theta,
	\qquad
	D^2M(\theta)[h,h]=I(\theta)[h],
	\qquad h\in\Theta,
$$
whence, for $K(\vartheta|\theta)$, the Kullback--Leibler divergence from \eqref{Eq:KL}, $\theta,\vartheta\in\Theta$,
\begin{equation}
\label{Eq:KLDerivatives}
	D_\vartheta K(\vartheta|\theta) = T_\vartheta - T_\theta,
	\qquad
	D^2_\vartheta K(\vartheta|\theta)[h,h] = I(\vartheta)[h].
\end{equation}
According to the second identity in \eqref{Eq:KLDerivatives}, $I$ is the curvature of the Kullback--Leibler divergence in its first argument, which regulates the asymptotics of the deterministic term in \eqref{Eq:WassDecomp} via the representation \eqref{Eq:KLRepres}. At the same time, through \eqref{Eq:FisherInfo}, $I(\theta)$ also governs the fluctuations of the sufficient statistic $\hat T_n$ from \eqref{Eq:SuffStat} around $T_\theta$, which determine the decay of the stochastic term in \eqref{Eq:WassDecomp}, as 
\begin{equation}
\label{Eq:FisherFluct}
	Var^{(n)}_{\theta}\big(\hat T_n(h)\big) = \frac1n I(\theta)[h], \qquad h\in\Theta.
\end{equation}

The following central assumption requires a pointwise upper bound and a local coercivity bound for the Fisher information, both expressed in the Hilbert-scale geometry.

\begin{assumption}[{Coercivity and boundedness of the Fisher information}]
\label{Ass:Link}
There exists a constant $R_0>0$ such that $E_{\theta}^{(1)}[\|\beta_{X_1}\|_{\Theta^*}]<\infty$ for all $\theta\in B_{R_0}^\Theta(\theta_0):=\{\theta\in\Theta:\|\theta-\theta_0\|_\Theta\le R_0\}$. Furthermore, there exist constants $0<c_0\le C_0<\infty$ such that
\begin{equation}
\label{Eq:LinkUpper}
	I(\theta_0)[h] \;\le\; C_0\|h\|^2_{H_0},
	\qquad\text{for all}\ h\in\Theta,
\end{equation}
and
\begin{equation}
\label{Eq:LinkLower}
	I(\theta)[h] \;\ge\; c_0\|h\|^2_{H_0},
	\qquad \text{for all}\ h\in\Theta\
	\text{and}\ \theta\in B_{R_0}^\Theta(\theta_0).
\end{equation}
\end{assumption}

Assumption \ref{Ass:Link} entails that $c_0 L^{-2p}\;\preceq\; I(\theta_0) \;\preceq\; C_0 L^{-2p}$ in the sense of quadratic forms on $\Theta$, with the lower bound holding over a neighbourhood of $\theta_0$. Here $L^{-2p}:\Theta\to\Theta^*$ is understood as the map with associated form  
$$
	h \in\Theta \mapsto \langle h, L^{-2p}h\rangle_\Theta = \|L^{-p}h\|^2_\Theta = \|h\|^2_{H_0}.
$$ 
The upper bound \eqref{Eq:LinkUpper} guarantees that $I(\theta_0)$ is finite and bounded on $\Theta$, since $\|h\|_{H_0}\le\lambda_1^{-p}\|h\|_\Theta$. By polarisation it therefore extends to a bounded symmetric bilinear form on $\Theta\times\Theta$, induced by a unique bounded operator $I(\theta_0):\Theta\to\Theta^*$ that coincides with the covariance operator of $\beta_{X_1}$ under $f_{\theta_0}$,
$$
	\big(I(\theta_0)[h]\big)(g)
	= Cov^{(1)}_{\theta_0}\big(\beta_{X_1}(g),\,\beta_{X_1}(h)\big),
	\qquad g,h\in\Theta.
$$
The two comparison operators are (multiples of) negative powers of $L$, and are diagonalised by the orthonormal basis $\{\lambda_k^{-p}e_k\}_{k=1}^\infty$ of $\Theta$, with eigenvalues $\{c_0\lambda_k^{-2p}\}_{k=1}^\infty$ and $\{C_0\lambda_k^{-2p}\}_{k=1}^\infty$, respectively. In view of \eqref{Eq:Prior}, 
the curvature of the Kullback--Leibler divergence is comparable, up to the constants $c_0,C_0$, to an operator that is diagonal in the same basis as the prior covariance. This is crucial for bounding both the deterministic and stochastic terms in \eqref{Eq:WassDecomp}, allowing for explicit evaluation of certain Gaussian quadratic form ratios appearing in the analysis.

The second assumption formalises the differentiability of the Kullback--Leibler divergence and its link to the Fisher information via \eqref{Eq:KLDerivatives}, and imposes a second-order Taylor expansion around $\theta_0$ with H\"older-type remainder.

\begin{assumption}[{Expansion of the Kullback--Leibler divergence}]
\label{Ass:KLTaylor}
$M$ is twice continuously Fr\'echet differentiable on $\Theta$. Furthermore, there exists an index $m_K\in(d/2,p]$ and constants $q_K\in(0,1]$ and $C_K,R_K>0$ such that
\begin{equation}
\label{Eq:KLTaylor}
	\left| K(\theta|\theta_0) - \tfrac12 I(\theta_0)[\theta-\theta_0]\right|
	\le C_K \|\theta-\theta_0\|_{H_{m_K}}^{2+q_K},
	\qquad \text{for all}\ \theta\in \Theta\cap B^{H_{m_K}}_{R_K}(\theta_0).
\end{equation}
\end{assumption}

Next, we assume a (mild) global lower bound for the Kullback--Leibler divergence in terms of a negative-order Hilbert scale norm. Combined with properties of the employed Gaussian series prior, this underpins a basic localisation property for the posterior distribution around $\theta_0$ as $n\to\infty$, allowing the asymptotic analysis to be restricted to neighbourhoods where \eqref{Eq:LinkLower} and \eqref{Eq:KLTaylor} apply.

\begin{assumption}[{Global lower bound for the Kullback--Leibler divergence}]
\label{Ass:KLLB}
There exist a nondecreasing function $\varphi:[0,\infty)\to[0,\infty)$ with $\varphi(u)>0$ for $u>0$ and $\varphi(u)\gtrsim u^2$ as $u\to0^+$, and constants $c_\varphi>0$ and $r_K\ge0$, such that
\begin{equation}
\label{Eq:KLLB}
	K(\theta|\theta_0)\;\ge\; \varphi\big(c_\varphi\|\theta-\theta_0\|_{H_{-r_K}}\big),
	\qquad \theta\in\Theta.
\end{equation}
\end{assumption}

In the next assumption, we quantify the fluctuations of $\hat T_n$ around $T_{\theta_0}$ in a weak Hilbert scale geometry.

\begin{assumption}[{Concentration of the sufficient statistic}]
\label{Ass:Concentration}
For the index $m_K$ from Assumption \ref{Ass:KLTaylor}, there exists a constant $A_T>0$ such that
\begin{equation}
\label{Eq:Concentration}
	E_{\theta_0}^{(1)}\exp\left\{\|\beta_{X_1} - T_{\theta_0}\|_{H_{-m_K}}/A_T\right\}<\infty.
\end{equation}
\end{assumption}

The restriction $m_K\le p$ guarantees, in view of the continuous embedding $H_{-m_K}\subseteq \Theta^*$, that $\|\hat T_n-T_{\theta_0}\|_{\Theta^*}$ is almost surely finite. On the other hand, \eqref{Eq:Concentration} implies that $\|\beta_{X_1}-T_{\theta_0}\|_{H_{-m_K}}$ has sub-exponential tails, and hence moments of all orders. In view of \eqref{Eq:FisherInfo}, Assumption \ref{Ass:Link}, the series representation of $\|\cdot\|_{H_{-m_K}}$ and the eigenvalue asymptotics \eqref{Eq:PolyGrowth}, this necessarily requires $m_K>d/2$.

%
%
%

\subsubsection{Main Result}
\label{Subsec:Statement}

Under the above prior specification and model assumptions, the following posterior concentration theorem in Hilbert scale norms is obtained. Its proof is deferred to Section \ref{Sec:Proofs}.

\begin{theorem}
\label{Theo:Main}
Let $\{f_\theta, \ \theta\in H_p\}$, for some $p\ge0$, be an infinite-dimensional exponential family as in \eqref{Eq:ExpFamily}. Let $\theta_0\in H_\beta$, $\beta>p$, and let $\Pi$ be an $\alpha$-regular Gaussian series prior as in \eqref{Eq:Prior}, for some $\alpha>p$. Let Assumptions \ref{Ass:Link}--\ref{Ass:Concentration} be satisfied for some $q_K\in(0,1]$, $r_K\ge0$, $m_K\in(d/2,p]$. Further assume that
\begin{equation}
\label{Eq:Floor}
    q_K(\alpha\wedge\beta)
    >
    \max\big\{
        (2+q_K)p+2r_K,\;
	        (2+q_K)m_K+d
    \big\},
\end{equation}
and that, in the oversmoothing case $\alpha>\beta$, in addition
\begin{equation}
\label{Eq:FloorOversmooth}
	\alpha<\frac{(2+q_K)(\beta-m_K)-d}{2}.
\end{equation}
Then, for every $0\le s < \alpha\wedge\beta$ and every $M_n\to\infty$,
$$
	E_{\theta_0}^{(n)}\left[\Pi\big(\theta\in\Theta:\|\theta-\theta_0\|_{H_s}>M_n 
	n^{-\frac{(\alpha\wedge\beta)-s}{2\alpha+d}}
	\big|\, X^{(n)}\big)\right]
	\to 0,
	\qquad \text{as}\ n\to\infty.
$$
\end{theorem}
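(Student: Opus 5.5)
The proof follows the Wasserstein route of Section \ref{Subsec:WassApproach}. By Lemma 1 of \citet{dolera2024strong}, it suffices to show that $\varepsilon_n$ in \eqref{Eq:WassPCR} is $O(n^{-(\alpha\wedge\beta-s)/(2\alpha+d)})$. The required integrability \eqref{Eq:Integr} for prior and posterior holds because Gaussian tails are combined with the finiteness \eqref{Eq:NormConst}. We then split $\varepsilon_n$ by the triangle inequality \eqref{Eq:WassDecomp} and bound the deterministic and stochastic terms separately, each at the target rate $\delta_n:=n^{-(\alpha\wedge\beta-s)/(2\alpha+d)}$.

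\emph{Deterministic term.} We start from the representation \eqref{Eq:KLRepres}.

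First we localise. By Assumption \ref{Ass:KLLB} and a small-ball estimate for the Gaussian prior (the prior mass of $H_{m_K}$-balls around $\theta_0$ is at least $e^{-cn\eta_n^2}$, via the RKHS approximation of $\theta_0$ by its projection onto the first $J_n\asymp n^{d/(2\alpha+d)}$ modes), the denominator is at least $e^{-Cn\eta_n^2}$. The contribution to the numerator from outside a fixed $H_{m_K}$-ball $B$ is then superpolynomially small. Here we use the fact that prior draws lying in $H_{-r_K}$-neighbourhoods but outside $B$ have exponentially small prior mass; this uses interpolation \eqref{Eq:Interp} between $H_{-r_K}$ and $H_{\alpha'}$ with $\alpha'<\alpha$, together with the Borell--TIS inequality.

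Inside $B$, Assumption \ref{Ass:KLTaylor} replaces $K(\theta|\theta_0)$ by $\tfrac12 I(\theta_0)[\theta-\theta_0]$ up to the remainder $C_K\|\theta-\theta_0\|_{H_{m_K}}^{2+q_K}$. Assumption \ref{Ass:Link} sandwiches the quadratic form between $\tfrac{c_0}{2}\|\cdot\|_{H_0}^2$ and $\tfrac{C_0}{2}\|\cdot\|_{H_0}^2$, and both comparison forms are diagonal in $\{e_k\}$. The resulting Gaussian integrals factor coordinatewise: the measure $e^{-\frac{c}{2}n\|\theta-\theta_0\|_{H_0}^2}d\Pi$ is Gaussian with coordinate variances $(n c+\lambda_k^{2\alpha+d})^{-1}$ and mean $\theta_{0,k}\,nc/(nc+\lambda_k^{2\alpha+d})$. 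Its $H_s$ second moment about $\theta_0$ is therefore
$$\sum_k \frac{\lambda_k^{2s}}{n+\lambda_k^{2\alpha+d}}+\sum_k\lambda_k^{2s}\theta_{0,k}^2\Big(\frac{\lambda_k^{2\alpha+d}}{n+\lambda_k^{2\alpha+d}}\Big)^2\lesssim \delta_n^2,$$
which is the usual bias--variance computation, with the bias term using $\theta_0\in H_\beta$.

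The mismatch between the constants $c_0$ and $C_0$ (numerator versus denominator) costs only a constant factor. This is the point where some care is needed: we bound the ratio by comparing the numerator under $c_0$ with the denominator under $C_0$, and control the normalising-constant ratio $\prod_k (1+nC_0\lambda_k^{-2\alpha-d})^{1/2}/(1+nc_0\lambda_k^{-2\alpha-d})^{1/2}$. This product grows like $e^{cJ_n}$, so it must be absorbed through the localisation to balls of radius $\asymp\delta_n$ in a weak norm, not bounded crudely. The Taylor remainder is $\le n\|\theta-\theta_0\|_{H_{m_K}}^{2+q_K}$, which is $o(1)$ on the effective support. The floor condition \eqref{Eq:Floor} (and \eqref{Eq:FloorOversmooth} when $\alpha>\beta$) is exactly what makes this hold after interpolating $\|\cdot\|_{H_{m_K}}$ between $H_0$ (rate $\delta_n$ at $s=0$) and $H_{\alpha\wedge\beta}$.

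\emph{Stochastic term.} We work on the event $E_n=\{\|\hat T_n-T_{\theta_0}\|_{H_{-m_K}}\le \rho_n\}$, where $\rho_n=n^{-1/2}\log n$. By Assumption \ref{Ass:Concentration} and Bernstein/Yurinskii-type inequalities in Hilbert space, $E_n^c$ has superpolynomially small probability, and on $E_n^c$ a crude second-moment bound suffices.

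On $E_n$ we follow the path $\tau_t$ and use the Benamou--Brenier formula. The velocity field of $t\mapsto\Pi_n^*(\cdot|\tau_t)$ solves a weighted elliptic equation: the derivative of the density is $n(\hat T_n-T_{\theta_0})(\theta-\bar\theta_t)$. The $\Pcal_2(H_s)$ metric speed is then bounded, via a Poincaré/Brascamp--Lieb inequality for the localised posterior, by
$$n\sup_{\|g\|_{H_{-s}}\le1}\ \mathrm{Cov}_{\Pi_n^*(\cdot|\tau_t)}\big(\langle g,\theta\rangle,(\hat T_n-T_{\theta_0})(\theta)\big).$$

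The key step is a mixed-geometry Brascamp--Lieb bound. The localised posterior is log-concave relative to the prior, with potential Hessian $nI(\theta)\succeq nc_0L^{-2p}$ in the sense of $H_0$ by \eqref{Eq:LinkLower}. Its covariance is therefore dominated by $(nc_0+L^{2\alpha+d})^{-1}$ in $H_0$-coordinates. This gives a speed of at most
$$n\,\|(n+L^{2\alpha+d})^{-1}\|_{H_{-m_K}\to H_s}\,\rho_n,$$
which is $\lesssim n^{1/2}\log n\cdot n^{-(1-(s+m_K)/(\alpha+d/2))}$-type quantities. We then check that this is $\lesssim\delta_n$ using $m_K<\alpha\wedge\beta$ and \eqref{Eq:Floor}, possibly with a Cauchy--Schwarz split over low and high frequencies to avoid logarithmic losses.

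The Brascamp--Lieb step is what I expect to be the main obstacle. Proving it requires Galerkin truncation to finite dimensions, restricting to the convex ball $B$ (so that the lower bound \eqref{Eq:LinkLower} holds along it), and passing to the limit. The delicate part is keeping the $H_s$ loss geometry decoupled from the $H_{-m_K}$ geometry of $\hat T_n$.
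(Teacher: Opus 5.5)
\textbf{The proposal has two genuine gaps.} Your overall architecture matches the paper's: the decomposition \eqref{Eq:WassDecomp}, Laplace-type localisation, Galerkin truncation with a Bakry--\'Emery/Brascamp--Lieb bound on a convex ball, and Benamou--Brenier along $\tau_t$. However, both quantitative steps fail as written.

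\emph{Deterministic term.} Comparing the numerator under $c_0$ with the denominator under $C_0$ cannot be repaired by localisation. Under the relevant Gaussian comparison measures, $n\|\theta-\theta_0\|_{H_0}^2$ concentrates at order $\sum_k n/(n+\lambda_k^{2\alpha+d})\asymp J_n\asymp n^{d/(2\alpha+d)}$. Equivalently, for $s=0$ and $\alpha\le\beta$, one has $n\delta_n^2\asymp n^{d/(2\alpha+d)}\to\infty$. So the pointwise discrepancy $e^{\frac n2(C_0-c_0)\|\theta-\theta_0\|_{H_0}^2}$ is already $e^{cJ_n}$ on every set carrying the bulk of the mass, and your bound is off by a superpolynomial factor.

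The missing idea is never to diagonalise inside the main ratio. On $B^{H_{m_K}}_{\eta_n}(\theta_0)$ with $n\eta_n^{2+q_K}\to0$, Assumption \ref{Ass:KLTaylor} gives $e^{-nK(\theta|\theta_0)}=e^{\pm o(1)}e^{-\frac n2 I(\theta_0)[\theta-\theta_0]}$ in \emph{both} numerator and denominator. The ratio is then $(1+o(1))$ times the $H_s$-second moment of the single, non-diagonal Gaussian $\mu^{(n)}_{I,\theta_0}$. Its covariance is controlled by order reversal, $(nI(\theta_0)+Q_\Pi^{-1})^{-1}\preceq(nc_0+L^{2\alpha+d})^{-1}$. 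Its bias is controlled by the variational (Tikhonov) characterisation of its mean, compared via $C_0$ with a spectral cut-off of $\theta_0$, then interpolated between $H_0$ and $H_{\alpha+d/2}$ (Lemma \ref{Lem:QuadraticDet}).

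The crude $c_0$-comparison is affordable only on the annulus $B_R(\theta_0)\cap B^{H_{m_K}}_{\eta_n}(\theta_0)^c$. There, Gaussian concentration $\exp\{-cn^{1-\frac{2m_K}{2\alpha+d}-2a}\}$ beats $Z^{(n)}_{c_0,\theta_0}/Z^{(n)}_{I,\theta_0}\le\exp\{Cn^{1-\frac{2(\alpha\wedge\beta)}{2\alpha+d}}\}$. Note that $B_R$ must be an $H_p$-ball, where \eqref{Eq:LinkLower} actually holds, not a fixed $H_{m_K}$-ball.

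\emph{Stochastic term.} Bounding $\hat D_n$ on $E_n$ by its worst case $\|\hat D_n\|_{H_{-m_K}}\le\rho_n$ reproduces exactly the algebraic loss the theorem is meant to remove. Since $\|(n+L^{2\alpha+d})^{-1}\|_{H_{-m_K}\to H_s}=\sup_k\lambda_k^{s+m_K}/(n+\lambda_k^{2\alpha+d})\asymp n^{\frac{s+m_K}{2\alpha+d}-1}$, your speed bound is $n^{\frac{s+m_K}{2\alpha+d}-\frac12}\log n=\delta_n\, n^{\frac{m_K-d/2-(\alpha-\alpha\wedge\beta)}{2\alpha+d}}\log n$. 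This diverges relative to $\delta_n$ whenever $\alpha\le\beta$, because $m_K>d/2$ is forced by Assumption \ref{Ass:Concentration}. No frequency split rescues it, since on $E_n$ the fluctuation may put all its $H_{-m_K}$-mass at $\lambda_k\asymp n^{1/(2\alpha+d)}$.

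What is needed is the following:
\begin{enumerate}
\item Write $\frac{d}{dt}\int\phi\,d\tilde\nu_{\hat\tau_t}=n\,\mathrm{Cov}(\phi,\hat D_n)$ and apply Cauchy--Schwarz.
\item Use the Poincar\'e inequality with constant $\kappa_{n,s}$ for the arbitrary (nonlinear) test function $\phi$, and the preconditioned first inequality in \eqref{Eq:CondPoincare} for the linear functional $\hat D_n$. This gives speed $\le n\sqrt{\kappa_{n,s}\,\hat D_n(\Ccal_n\hat D_n)}$.
\item Take expectations instead of bounding on the event. By \eqref{Eq:FisherFluct} and \eqref{Eq:LinkUpper}, $E[\hat D_n(e_k)^2]\le C_0/n$, so $E[\hat D_n(\Ccal_n\hat D_n)]\lesssim n^{-2+\frac{d}{2\alpha+d}}$ rather than $n^{-2+\frac{2m_K}{2\alpha+d}}$. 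This yields the rate $n^{-\frac{\alpha-s}{2\alpha+d}}$.
\end{enumerate}
The $H_{-m_K}$-geometry should enter only through the localising event.

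Three smaller points on the stochastic term. First, your supremum over linear $g$ only tracks the motion of the posterior mean, which is a lower bound on the $W_2$ metric speed, not an upper bound. Second, conditioning on the ball needs, beyond control of $\hat D_n$, an evidence lower bound and exterior-mass bounds (Lemmas \ref{Lem:GoodEvent}--\ref{Lem:CondError}). Third, on $E_n^c$ you need a global transport estimate from global convexity of $M$, not just a bare second-moment bound.
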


Conditions \eqref{Eq:Floor}--\eqref{Eq:FloorOversmooth} are jointly equivalent to
$$
    q_K(\alpha\wedge\beta)>(2+q_K)p+2r_K,
    \qquad
	    (2+q_K)\big((\alpha\wedge\beta)-m_K\big)>2\alpha+d ,
$$
the second of which reduces to a lower bound on $\alpha$ when $\alpha\le\beta$ and to the upper bound \eqref{Eq:FloorOversmooth} when $\alpha>\beta$. In the concrete models studied in Section \ref{Sec:Applications} one has $q_K=1$ and may take $m_K$, $p$ and $r_K$ arbitrarily close to $d/2$, so that \eqref{Eq:Floor} reads $\alpha\wedge\beta>\tfrac52d$ and \eqref{Eq:FloorOversmooth} reads $\alpha<\tfrac32\beta-\tfrac54d$.

Theorem \ref{Theo:Main} substantially refines the results in Section~3.1 of \citet{dolera2024strong}. Firstly, we note that their explicit bounds for infinite-dimensional exponential families \citep[Propositions 2 and 5]{dolera2024strong} require the prior covariance operator and the Fisher information $I(\theta_0)$ to be simultaneously diagonalised. Since outside linear models the eigenfunctions of $I(\theta_0)$ generally depends nontrivially on the unknown truth $\theta_0$, this leads to an oracle prior specification. Our two-sided link condition in Assumption \ref{Ass:Link} avoids this issue by comparing the Fisher information with fixed powers of the Hilbert-scale generator, thereby allowing Gaussian series priors in a standard, truth-independent basis.

Furthermore, our result also yields a quantitative improvement in the obtained rates. For the smoothness-matching case $\alpha=\beta$, Theorem \ref{Theo:Main} shows that the posterior concentrates around $\theta_0\in H_\beta$ in $H_s$-norm at rate $n^{-(\beta-s)/(2\beta+d)}$, which represents the typical minimax rate for estimating $\beta$-smooth functions in $H_s$-losses \citep{stone1982optimal}. In Section \ref{Sec:Applications}, we show that this is indeed the case for the concrete models introduced in Examples \ref{Ex:DensityEstim}--\ref{Ex:WhiteNoise}, over the range $\beta > \frac{5}{2}d$.

For comparison, under the eigenvalue asymptotics \eqref{Eq:PolyGrowth}, formally applying the argument of \citet{dolera2024strong} with $p=s$ as the index for the parameter space and target geometry yields (for the posterior arising from their oracle prior) the $H_s$-rate $n^{-\frac{\beta-2s+d/2}{2\beta+d}}$. For every $s>d/2$, which by the discussion following Assumption \ref{Ass:Concentration} is the range in which the choice $p=s$ is admissible, this is slower than the minimax rate by the algebraic factor $n^{\frac{s-d/2}{2\beta+d}}$. This suboptimality arises because their single-geometry stability argument uses $H_s$ both as the target geometry and to control the sufficient-statistic fluctuation. By decoupling these two geometries and deriving in Section \ref{Subsec:StochTerm} a sharper mixed-geometry local stability estimate for the posterior kernel in Wasserstein distance, Theorem \ref{Theo:Main} instead recovers in applications the minimax benchmark.

%
%
%
%
%

\section{Applications}
\label{Sec:Applications}

In this section, we apply the general techniques and results to the three concrete models introduced in Examples \ref{Ex:DensityEstim}--\ref{Ex:WhiteNoise}, obtaining in each of them explicit posterior contraction rates in Sobolev norms.

%
%
%
%
%

\subsection{Density estimation}
\label{Subsec:DensityEstim}

Recall the setting from Example \ref{Ex:DensityEstim}. Let $X_1,\dots,X_n\iid f_{\theta_0}$ be i.i.d.~random variables with values in $[0,1]^d$ drawn from some unknown p.d.f.~
\begin{equation}
\label{Eq:DensityLogistic}
    f_{\theta_0}(x)
    =
    \frac{e^{\theta_0(x)}}
    {\int_{[0,1]^d}e^{\theta_0(x)}\,dx},
    \qquad x\in[0,1]^d,
\end{equation}
for some continuous function $\theta_0:[0,1]^d\to\R$.  As described in Example \ref{Ex:DensityEstim}, we ensure identifiability by imposing the zero-integral constraint, through the mean-zero wavelet Sobolev scale $\dot H^t([0,1]^d)$, $t\in\R$, constructed from Example \ref{Ex:WaveletScale}, and taking the parameter space $\Theta = \dot H^p([0,1]^d)$ for any $p>d/2$. The latter restriction ensures the continuous embedding $\dot H^p([0,1]^d)\subseteq C([0,1]^d)$, and, consequently, the continuity of the point evaluation functionals $\delta_x$, $x\in[0,1]^d$, on $H^p([0,1]^d)$. With these choices, model \eqref{Eq:DensityLogistic} defines an infinite-dimensional exponential family with $\beta_x=\delta_x$ and
$M(\theta)=\log\int_{[0,1]^d}e^{\theta(x)}\,dx$. We then have, for every $h\in\dot H^p([0,1]^d)$,
$$
    T_\theta(h)= E_\theta^{(1)}[h(X_1)] =\int_{[0,1]^d}h(x)f_\theta(x)\,dx
$$
and
$$
    I(\theta)[h]
    =
    Var_\theta^{(1)}(h(X_1))
    =
    \int_{[0,1]^d}h(x)^2f_\theta(x)\,dx
    -
    \left(
        \int_{[0,1]^d}h(x)f_\theta(x)\,dx
    \right)^2.
$$

We verify Assumptions \ref{Ass:Link}--\ref{Ass:Concentration}, explicitly identifying all involved constants. We follow the exposition in \citep[Section 4.4]{dolera2024strong}.
First, note that
\begin{align*}
    \sup_{x\in[0,1]^d}
    \|\delta_x\|_{\dot H^{-p}}
    &=\sup_{x\in[0,1]^d}\left\{\sup_{h\in\dot H^p:\|h\|_{H^p}=1} |h(x)| \right\}\\
    &\le \sup_{h\in\dot H^p:\|h\|_{H^p}=1} \|h\|_{L^\infty}
    \lesssim \sup_{h\in\dot H^p:\|h\|_{H^p}=1} \|h\|_{H^p} = 1.
\end{align*}
This shows that $E_\theta^{(1)} \big[ \|\delta_{X_1}\|_{\dot H^{-p}}]$ is uniformly bounded in $\dot H^p([0,1]^d)$. Next, since, for every $\theta\in\dot H^p([0,1]^d)$, $\inf_{x\in[0,1]^d}f_\theta(x) \geq e^{-2\|\theta\|_{L^\infty}}$, the triangle inequality gives
\begin{equation}
\label{Eq:DensityLowerBound}
    \inf_{x\in[0,1]^d}f_\theta(x)
    \geq
    \exp\left\{
        -2\big(
            \|\theta_0\|_{L^\infty}+c_pR_0
        \big)
    \right\}
    =:c_0,
\end{equation}
for every $\theta\in B_{R_0}^\Theta(\theta_0)$, where $c_p>0$ is the multiplicative constant in the continuous embedding 
$\dot H^p([0,1]^d)\subset C([0,1]^d)$. Moreover, since every $h\in \dot H^p([0,1]^d)$ has zero Lebesgue integral, the variational characterisation of variances yields
\begin{equation}
\label{Eq:DensityFisherLower}
\begin{split}
    I(\theta)[h]
    &=
    \inf_{a\in\R}
    \int_{[0,1]^d}(h(x)-a)^2f_\theta(x)\,dx
    \\
    &\geq
    \left(\inf_{x\in[0,1]^d}f_\theta(x)\right)
    \inf_{a\in\R}
    \int_{[0,1]^d}(h(x)-a)^2\,dx
    \ge
    c_0
    \|h\|_{L^2}^2.
\end{split}
\end{equation}
On the other hand,
\begin{equation}
\label{Eq:DensityFisherUpper}
    I(\theta_0)[h]
    \leq
    \int_{[0,1]^d}h(x)^2f_{\theta_0}(x)\,dx
    \leq
    \|f_{\theta_0}\|_{L^\infty}\|h\|_{L^2}^2.
\end{equation}
Recalling that $\dot L^2([0,1]^d) = \dot H^0([0,1]^d)$, this shows that, for any $R_0>0$, Assumption \ref{Ass:Link} holds with $c_0$ as in \eqref{Eq:DensityLowerBound} and $C_0:=\|f_{\theta_0}\|_{L^\infty}$.

Turning to Assumption \ref{Ass:KLTaylor}, in view of the continuous embedding $\dot H^p([0,1]^d)\subseteq C([0,1]^d)$ and the chain rule, $M$ is infinitely Fr\'echet differentiable (in fact, analytic) on $\dot H^p([0,1]^d)$. Fix any $m_K\in(d/2,p]$. Its first three derivatives in any direction $h\in \dot H^p([0,1]^d)$ satisfy $DM(\theta)[h]=E_\theta^{(1)}[h(X_1)]$, $D^2M(\theta)[h,h] = Var_\theta^{(1)}(h(X_1))=I(\theta)[h]$ 
and
$$
    D^3M(\theta)[h,h,h]
    =
    E_\theta^{(1)}
    \left[
        \left(
            h(X_1)-E_\theta^{(1)}[h(X_1)]
        \right)^3
    \right].
$$
In particular, $\left|D^3M(\theta)[h,h,h] \right|\leq 8\|h\|_{L^\infty}^3\le 8c_{m_K}^3\|h\|_{H^{m_K}}^3$, and Taylor's formula applied to $ K(\theta\mid\theta_0) = M(\theta)-M(\theta_0)-T_{\theta_0}(\theta-\theta_0)$ therefore yields
$$
    \left|
        K(\theta\mid\theta_0)
        -
        \frac12 I(\theta_0)[\theta-\theta_0]
    \right|
    \le \frac{4c_{m_K}^3}{3}
    \|\theta-\theta_0\|_{H^{m_K}}^3,
$$
for all $\theta\in\dot H^p([0,1]^d)$, whence Assumption \ref{Ass:KLTaylor} is seen to hold with $q_K=1$, $C_K = 4c_{m_K}^3/3$ and any $R_K>0$.

We next derive the uniform lower bound in weak geometry for the Kullback-Leibler divergence required by Assumption \ref{Ass:KLLB}. For $\theta\in \dot H^p([0,1]^d)$, write $h:=\theta-\theta_0$ and set
$$
    g
    :=
    h-E_{\theta_0}^{(1)}[h(X_1)],
    \qquad
    T_{\theta_0}(|g|)
    :=
    E_{\theta_0}^{(1)}[|g(X_1)|].
$$
Then $E_{\theta_0}^{(1)}[g(X_1)]=0$ and
$$
    K(\theta\mid\theta_0)
    =
    \log E_{\theta_0}^{(1)}
    \left[
        e^{g(X_1)}
    \right].
$$
Since $E_{\theta_0}^{(1)}[g_+(X_1)]=T_{\theta_0}(|g|)/2$, the elementary inequality $e^u\geq1+u+u_+^2/2$, $u\in\R$, gives
$$
    E_{\theta_0}^{(1)}
    \left[
        e^{g(X_1)}
    \right]
    \geq
    1+\frac{T_{\theta_0}(|g|)^2}{8}.
$$
This yields a quadratic lower bound when $T_{\theta_0}(|g|)$ is bounded. For $T_{\theta_0}(|g|)\geq2$, if
$q=P_{\theta_0}^{(1)}(g(X_1)\geq0)$, conditional Jensen's inequality gives
\begin{align*}
    E_{\theta_0}^{(1)}
    \left[
        e^{g(X_1)}
    \right]
    &\geq
    E_{\theta_0}^{(1)}
    \left[
        e^{g(X_1)}1_{g(X_1)\geq0}
    \right]\\
	&\geq
    P_{\theta_0}^{(1)}(g(X_1)\geq0)\exp\left\{\frac{T_{\theta_0}(|g|)}{2P_{\theta_0}^{(1)}(g(X_1)\geq0)}\right\}
    \geq
    e^{T_{\theta_0}(|g|)/2}.
\end{align*}
Combining the two preceding displays, we find that
$$
    K(\theta\mid\theta_0)
    \gtrsim
    T_{\theta_0}(|g|)^2\wedge T_{\theta_0}(|g|).
$$
Now since both $\theta$ and $\theta_0$ have zero Lebesgue integral, $h=g-\int_{[0,1]^d}g(x)\,dx$, and hence
$$
    \|h\|_{L^1}
    \leq
    2\|g\|_{L^1}
    \leq
    \frac{2}{\inf_{x\in[0,1]}f_{\theta_0}(x)}T_{\theta_0}(|g|)
    \le\frac{2}{e^{-2\|\theta_0\|_{L^\infty}}}T_{\theta_0}(|g|).
$$
Therefore, since for every $r_K>d/2$, the continuous embedding
$\dot H^{r_K}([0,1]^d)\subset L^\infty ([0,1]^d)$ and duality give $L^1([0,1]^d)\subset \dot H^{-r_K}([0,1]^d)$, so that
$$    K(\theta\mid\theta_0)
    \gtrsim
    \varphi\left(
        c_\varphi
        \|\theta-\theta_0\|_{\dot H^{-r_K}}
    \right),
$$
for some $c_\varphi>0$ and where $\varphi(u):=c(u^2\wedge u)$, $u\geq0$. This proves that Assumption \ref{Ass:KLLB} is valid for every $r_K>d/2$.

We proceed studying the concentration properties of the sufficient statistic $\hat T_n$ from \eqref{Eq:SuffStat}. For the index $m_K>d/2$ fixed above, $B_{m_K} := \sup_{x\in[0,1]^d}\|\delta_x\|_{\dot H^{-m_K}}<\infty$. Set $Z_i:=\delta_{X_i}-T_{\theta_0}$, $i=1,\dots,n$. These are independent, centred $\dot H^{-m_K}$-valued random elements satisfying $\|Z_i\|_{\dot H^{-m_K}}\leq 2B_{m_K}$ almost surely, whence
$$
    E_{\theta_0}^{(1)}
    \exp\left\{
        \|\delta_{X_1}-T_{\theta_0}\|_{\dot H^{-m_K}}/A_T
    \right\}
    \leq
    e^{2B_{m_K}/A_T}
    <\infty,
$$
for every $A_T>0$. This is precisely Assumption \ref{Ass:Concentration}. As $p>d/2$, the index $m_K$ may be chosen arbitrarily close to $d/2$ while still satisfying $m_K\leq p$. Thus, all assumptions of Theorem \ref{Theo:Main} are verified for the density estimation model \eqref{Eq:DensityLogistic}. The following posterior contraction result is obtained as a consequence.

\begin{theorem}
\label{Theo:DensityPCR}
Let $\theta_0\in \dot H^\beta([0,1]^d)$, and let $\Pi$ be an $\alpha$-regular Gaussian series prior as in \eqref{Eq:Prior}, for some $\alpha>5d/2$ with $\alpha\le\beta$. Assume data $X^{(n)}=(X_1,\dots,X_n)$ with $X_1,\dots,X_n\iid f_{\theta_0}$ for $f_{\theta_0}$ as in \eqref{Eq:DensityLogistic}. Then, for every $0\le s< \alpha\wedge\beta$ and every $M_n\to\infty$,
$$
	E_{\theta_0}^{(n)}\left[\Pi\big(\theta\in\Theta:\|\theta-\theta_0\|_{H^s}>M_n n^{-\frac{(\alpha\wedge\beta)-s}{2\alpha+d}}\big|\, X^{(n)}\big)\right]
	\to 0,
	\qquad{as} \ n\to\infty.
$$
\end{theorem}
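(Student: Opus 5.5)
Theorem \ref{Theo:DensityPCR} follows from Theorem \ref{Theo:Main}. The preceding discussion has already verified Assumptions \ref{Ass:Link}--\ref{Ass:Concentration} for the logistic density model. The constants obtained there are: $q_K=1$, any $R_0,R_K>0$, any $m_K\in(d/2,p]$, any $r_K>d/2$, with $c_0$ as in \eqref{Eq:DensityLowerBound} and $C_0=\|f_{\theta_0}\|_{L^\infty}$. What remains is to choose the free indices $p$, $m_K$, $r_K$ so that the structural requirements of Theorem \ref{Theo:Main} hold. These are $\beta>p$, $\alpha>p$, and the floor condition \eqref{Eq:Floor}. Since the statement assumes $\alpha\le\beta$, the oversmoothing condition \eqref{Eq:FloorOversmooth} is vacuous.

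With $q_K=1$ and $\alpha\wedge\beta=\alpha$, condition \eqref{Eq:Floor} reads
$$
\alpha>\max\{3p+2r_K,\;3m_K+d\}.
$$
Because $\alpha>5d/2$ is strict, fix $\eta>0$ with $\alpha>\tfrac52 d+5\eta$. Then take $p=m_K=\tfrac d2+\eta$ and $r_K=\tfrac d2+\eta$, which respects $d/2<m_K\le p$ and $r_K>d/2$. This gives
$$
3p+2r_K=\tfrac52 d+5\eta<\alpha,\qquad 3m_K+d=\tfrac52 d+3\eta<\alpha .
$$
Also $p<\alpha\le\beta$, so $\theta_0\in\dot H^\beta\subseteq\Theta=\dot H^p$ and the prior is supported on $\Theta$.

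The prior \eqref{Eq:Prior} has to be built on the mean-zero wavelet basis generating $\dot H^t([0,1]^d)$. This scale still satisfies the polynomial growth \eqref{Eq:PolyGrowth}, since removing a single basis element does not affect $\lambda_k\asymp k^{1/d}$. The posterior normalisation \eqref{Eq:NormConst} also needs checking. It holds because $\tau(\theta)-M(\theta)\le \|\tau\|_{\Theta^*}\|\theta\|_\Theta+\|\theta\|_{L^\infty}$, and $\|\theta\|_\Theta$ has Gaussian tails under $\Pi$ by Fernique's theorem. The exponent is therefore only linear in $\|\theta\|_\Theta$ and is integrable.

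Theorem \ref{Theo:Main} then gives contraction in $\|\cdot\|_{H_s}$ for every $0\le s<\alpha$ at rate $n^{-(\alpha-s)/(2\alpha+d)}$. Here $\|\cdot\|_{H_s}$ is the wavelet-scale norm of Example \ref{Ex:WaveletScale}, and that example shows it is equivalent to the Sobolev norm $\|\cdot\|_{H^s}$ on the relevant range. The equivalence constant can be absorbed into $M_n$. This is the rate claimed in the statement.

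The main obstacle is not analytic, since all the substantive estimates are already contained in Theorem \ref{Theo:Main} and in the verification above. The delicate point is bookkeeping: the Sobolev embedding constants $c_p$ and $c_{m_K}$ used in verifying the assumptions depend on the chosen $p$ and $m_K$. They must therefore be fixed before the constants $c_0$, $C_K$, $c_\varphi$ and $B_{m_K}$ are invoked, which the argument above does by fixing $\eta$ first.
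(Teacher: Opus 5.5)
Your proposal is correct and follows the paper's own route: with the assumptions verified in Section~\ref{Subsec:DensityEstim} for $q_K=1$, one applies Theorem~\ref{Theo:Main} with $p$, $m_K$, $r_K$ taken just above $d/2$, so that \eqref{Eq:Floor} reduces to $\alpha>\tfrac52 d$, and \eqref{Eq:FloorOversmooth} is vacuous since $\alpha\le\beta$. Your explicit choice $p=m_K=r_K=d/2+\eta$ and your Fernique check of \eqref{Eq:NormConst} just make explicit what the paper leaves implicit; also, since $H^s$ here is by definition the wavelet-scale norm, no norm-equivalence step is actually needed.
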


Theorem \ref{Theo:DensityPCR} yields, for smoothness matching priors, optimal recovery of the partial derivatives
of the natural parameter $\theta$, in view of the equivalence $\|\cdot\|_{H^s}\asymp \|\cdot\|_{W^{s,2}}$ holding for all $s\in\N$. The same conclusion transfers to the p.d.f. $f_\theta$: for any $t>d/2$ the space $H^t([0,1]^d)$ is closed under multiplication, and the map $\theta\mapsto f_\theta$ in \eqref{Eq:DensityLogistic} is Lipschitz from bounded
subsets of $\dot H^t([0,1]^d)$ into $H^t([0,1]^d)$. Since the events in Theorem
\ref{Theo:DensityPCR} may be taken to lie in a fixed $\dot H^t$-ball for all $n$ large,
the derivatives of $f_\theta$ of order at most $s$ therefore contract around those of
$f_{\theta_0}$ at the same rate $n^{-(\alpha\wedge\beta-s)/(2\alpha+d)}$, for every integer
$s$ with $d/2<s<\alpha\wedge\beta$.

The case $s=1$ deserves separate mention, as it concerns the \emph{score function} of the
model. Since $\log f_\theta=\theta-M(\theta)$ with $M(\theta)$ not depending on $x$, the
score of $f_\theta$ is exactly the gradient of the natural parameter,
$\nabla\log f_\theta=\nabla\theta$, so that Theorem \ref{Theo:DensityPCR} applied with
$s=1$ gives, for every $M_n\to\infty$,
$$
	E_{\theta_0}^{(n)}\left[\Pi\left(\theta:
	\|\nabla\log f_\theta-\nabla\log f_{\theta_0}\|_{L^2}
	>M_n n^{-\frac{(\alpha\wedge\beta)-1}{2\alpha+d}}\big|\, X^{(n)}\right)\right]
	\to 0,
	\qquad \text{as}\ n\to\infty,
$$
attaining for $\alpha=\beta$ the minimax rate $n^{-(\beta-1)/(2\beta+d)}$ for score
estimation \citep{wibisono2024optimal}. Equivalently, since $f_{\theta_0}$ is continuous
and strictly positive on $[0,1]^d$, and hence bounded above and below, the \emph{Fisher
divergence} satisfies
$$
	D_F(f_{\theta_0}\|f_\theta)
	:=
	\int_{[0,1]^d}
	\big\|\nabla\log f_{\theta_0}(x)-\nabla\log f_\theta(x)\big\|^2
	f_{\theta_0}(x)\,dx
	\asymp
	\|\nabla\theta-\nabla\theta_0\|_{L^2}^2,
$$
and therefore by Theorem \ref{Theo:DensityPCR}, the posterior $\Pi(\cdot|X^{(n)})$ contracts about $f_{\theta_0}$ if Fisher divergence at the squared rate $n^{-2((\alpha\wedge\beta)-1)/(2\alpha+d)}$, equal to
$n^{-2(\beta-1)/(2\beta+d)}$ when $\alpha=\beta$.

%
%
%

\subsection{Poisson Intensity estimation}
\label{Subsec:IntensityEstim}

We next consider the nonparametric intensity-estimation model introduced in Example \ref{Ex:IntensityEstim}. Let $N_1,\dots,N_n$ be independent Poisson point processes on $[0,1]^d$ with unknown common intensity
\begin{equation}
\label{Eq:IntensityModel}
    \rho_{\theta_0}(x)=e^{\theta_0(x)},
    \qquad x\in [0,1]^d,
\end{equation}
for some continuous function $\theta_0:[0,1]^d\to\R$. Here, we take the parameter space $\Theta=H^p([0,1]^d)$ for any $p>d/2$, constructed as in Example \ref{Ex:WaveletScale}. Then, $H^p([0,1]^d)\subset C([0,1]^d)$ continuously and, for every $r>d/2$, finite signed measures on $[0,1]^d$ define elements of $H^{-r}([0,1]^d)=(H^r([0,1]^d))^*$. Recall from Example \ref{Ex:IntensityEstim} that, relative to the law of a unit-intensity Poisson point process, this model is an exponential family with $\beta_N=N=\sum_{j=1}^{J}\delta_{X_j}$ and $M(\theta) =\int_{[0,1]^d}\big(e^{\theta(x)}-1\big)\,dx$.

We verify the assumptions of Theorem \ref{Theo:Main}. Since $p>d/2$, as argued at the beginning of Section \ref{Subsec:DensityEstim}, $B_p:=\sup_{x\in[0,1]^d}\|\delta_x\|_{H^{-p}} <\infty$.
Consequently,
$$
    \|N\|_{H^{-p}}
    \leq
    \sum_{j=1}^{J}\|\delta_{X_j}\|_{H^{-p}}
    \leq
    B_pJ,
$$
and therefore, recalling $J\sim \text{Po}(\Lambda_\theta)$, $\Lambda_\theta := \int_{[0,1]^d}e^{\theta(x)}dx$, 
$$ 
	E_\theta^{(1)}\big[\|N\|_{H^{-p}}\big] \leq B_p\Lambda_\theta.
$$  
The right-hand side is uniformly bounded over any $H^p$-neighbourhood of $B_{R_0}^\Theta(\theta_0)$, $R_0>0$, of $\theta_0$, because $\Lambda_\theta\leq
e^{\|\theta\|_{L^\infty}}\leq \exp\big\{\|\theta_0\|_{L^\infty}+c_pR_0\big\}$, where $c_p>0$ denote the multiplicative constant in the continuous embedding  $H^p([0,1]^d)\subset C([0,1]^d)$. Next, by the standard moment identities for Poisson stochastic integrals \citep[][Section~1.1]{kutoyants1998statistical}, 
$$
    T_\theta(h)
    =
    E_\theta^{(1)}[N(h)]
    =
    \int_{[0,1]^d} h(x)e^{\theta(x)}\,dx,
    \qquad \text{for all}\ h\in H^p([0,1]^d),
$$
and
$$
	I(\theta)[h]= 
	Var_\theta^{(1)}[N(h)]
	=
	\int_{[0,1]^d} h(x)^2e^{\theta(x)}\,dx.
$$ 
Let $c_p>0$ denote the multiplicative constant in the continuous embedding  $H^p([0,1]^d)\subset L^\infty([0,1]^d)$. For every $\theta$ satisfying
$\|\theta-\theta_0\|_{H^p}\leq R_0$, we have $\|\theta\|_{L^\infty}\leq\|\theta_0\|_{L^\infty}+c_pR_0$ and hence
$$
    I(\theta)[h]
    \geq
    \exp\big\{-\|\theta_0\|_{L^\infty}-c_pR_0\big\}
    \|h\|_{L^2}^2,
$$
whereas, at the truth,
\[
    I(\theta_0)[h]
    \leq
    e^{\|\theta_0\|_{L^\infty}}\|h\|_{L^2}^2.
\]
This shows that Assumption \ref{Ass:Link} holds with any $R_0>0$ and with $c_0 = \exp\big\{-\|\theta_0\|_{L^\infty}-c_pR_0\big\}$ and $C_0=e^{\|\theta_0\|_{L^\infty}}$.

We proceed with the verification of Assumption \ref{Ass:KLTaylor}. Similarly to Section \ref{Subsec:DensityEstim}, The log-partition function $M$ is smooth (in fact, analytic) on $H^p([0,1]^d)$, and its first three derivatives along any direction $h\in H^p([0,1]^d)$ are $DM(\theta)[h] = T_\theta(h)$,  $D^2M(\theta)[h,h] = I(\theta)[h]$ and
\begin{align*}
    D^3M(\theta)[h,h,h]
    &=
    \int_{[0,1]^d} h(x)^3e^{\theta(x)}\,dx.
\end{align*}
Fix any $m_K\in(d/2,p]$ and $R_K>0$, and write $h=\theta-\theta_0$. If $\|h\|_{H^{m_K}}\leq R_K$, then, uniformly over $t\in[0,1]$,
\[
    \|\theta_0+th\|_{L^\infty}
    \leq
    \|\theta_0\|_{L^\infty}+c_{m_K}R_K.
\]
It follows that
\begin{align*}
    \left|
        D^3M(\theta_0+th)[h,h,h]
    \right|
    &\leq
    \exp\big\{\|\theta_0\|_{L^\infty}+c_{m_K}R_K\big\}
    \|h\|_{L^\infty}\|h\|_{L^2}^2\\
    &\lesssim
    \exp\big\{\|\theta_0\|_{L^\infty}+c_{m_K}R_K\big\}
    \|h\|_{H^{m_K}}^3.
\end{align*}
Taylor's theorem therefore gives
$$
    \left|
        M(\theta)-M(\theta_0)
        -DM(\theta_0)[h]
        -\frac12 I(\theta_0)[h]
    \right|
    \leq
    C_K\|h\|_{H^{m_K}}^3
$$
for a constant $C_K$ depending only on $\theta_0$, $m_K$ and $R_K$. This verifies Assumption \ref{Ass:KLTaylor} with $q_K=1$.

Moving to Assumption \ref{Ass:KLLB}, For $h=\theta-\theta_0$, the Kullback--Leibler divergence admits the explicit representation
$$
    K(\theta\mid\theta_0)
    =
    M(\theta)-M(\theta_0)-T_{\theta_0}(h)
 	=
    \int_{[0,1]^d} e^{\theta_0(x)}
    \big(e^{h(x)}-1-h(x)\big)\,dx.
$$
The function $\psi(u)=e^u-1-u$, $u\in\R$, satisfies $\psi(u)\geq c_\psi\big(u^2\wedge |u|\big)$ for a universal constant $c_\psi>0$. Since $\inf_{x\in[0,1]^d}e^{\theta_0(x)}\geq e^{-\|\theta_0\|_{L^\infty}}$, we obtain
$$
    K(\theta\mid\theta_0)
    \geq
    c_\psi e^{-\|\theta_0\|_{L^\infty}}
    \int_{[0,1]^d}\big(h(x)^2\wedge |h(x)|\big)\,dx.
$$
To relate the right-hand side to the $L^1$-norm of $h$, note that
$$
	\|h\|_{L^1}
	=
    	\int_{\{|h|\leq1\}}|h(x)|\,dx
	+
    \int_{\{|h|>1\}}|h(x)|\,dx,
$$
so that, by the Cauchy--Schwarz inequality on the set $\{x:|h(x)|\leq1\}$, whose Lebesgue measure is at most one,
\begin{align*}
    \int_{[0,1]^d}\big(h^2(x)\wedge|h(x)|\big)\,dx
    &=
    \int_{\{x:|h(x)|\leq1\}}h(x)^2\,dx+\int_{\{x:|h(x)|>1\}}|h(x)|\,dx\\
    &\geq
    \left( \int_{\{x:|h(x)|\leq1\}}|h(x)|\,dx\right)^2+\int_{\{x:|h(x)|>1\}}|h(x)|\,dx.
\end{align*}
Considering separately the cases $\|h\|_{L^1}\leq1$ and $\|h\|_{L^1}>1$, one can lower bound the right-hand side in the last display by $\frac{1}{4}(\|h\|_{L^1}^2\wedge \|h\|_{L^1})$. Consequently,
$$
    K(\theta\mid\theta_0)
    \gtrsim
    \|\theta-\theta_0\|_{L^1}^2
    \wedge
    \|\theta-\theta_0\|_{L^1},
$$
and by proceeding similarly to Section \ref{Subsec:DensityEstim} we obtain that Assumption \ref{Ass:KLLB} is valid with the choice $\varphi(u)=u^2\wedge u$, for every $r_K>d/2$, for some $c_\varphi>0$.

Finally, for the index $m_K>d/2$ fixed above, let $B_{m_K}:=\sup_{x\in[0,1]^d}\|\delta_x\|_{H^{-m_K}} <\infty$ and write $dT_{\theta_0}(x)=e^{\theta_0(x)}\,dx$, $Z_i=N_i-T_{\theta_0}$. If $J_i=N_i([0,1]^d)$ denotes the total number of points of $N_i$, then
$J_i\sim\operatorname{Poisson}(\Lambda_{\theta_0})$ and
$$
    \|Z_i\|_{H^{-m_K}}
    \leq
    B_{m_K}(J_i+\Lambda_{\theta_0}).
$$
Hence, for every $\lambda>0$,
\begin{align*}
    E_{\theta_0}^{(1)}
    \left[
        \exp\left\{
            \lambda\|Z_i\|_{H^{-m_K}}
        \right\}
    \right]
    &\leq
    e^{\lambda B_{m_K}\Lambda_{\theta_0}}
    E\left[e^{\lambda B_{m_K}J_i}\right]\\
    &=
    \exp\left\{
        \lambda B_{m_K}\Lambda_{\theta_0}
        +
        \Lambda_{\theta_0}\big(e^{\lambda B_{m_K}}-1\big)
    \right\}
    <\infty.
\end{align*}
Since $\lambda>0$ is arbitrary, taking $\lambda=1/A_T$ for any $A_T>0$ shows that $Z_1,\dots,Z_n$ are i.i.d.~exponentially integrable, centred $H^{-m_K}$-valued random elements, with
$$
    E_{\theta_0}^{(1)}
    \exp\left\{
        \|Z_1\|_{H^{-m_K}}/A_T
    \right\}
    <\infty .
$$
This is precisely Assumption \ref{Ass:Concentration}, and concludes the verification of Assumptions \ref{Ass:Link}--\ref{Ass:Concentration}, yielding the following posterior contraction theorem.

\begin{theorem}
\label{Theo:IntensityPCR}
Let $\theta_0\in H^\beta([0,1]^d)$, and let $\Pi$ be an $\alpha$-regular Gaussian series prior as in \eqref{Eq:Prior}, for some $\alpha>5d/2$ with $\alpha\le\beta$. Assume data $N^{(n)}=(N_1,\dots,N_n)$, where $N_1,\dots,N_n$ are i.i.d.~copies of an inhomogeneous Poisson process on $[0,1]^d$ with intensity $\rho_{\theta_0}$ as in \eqref{Eq:IntensityModel}. Then, for every $0\le s< \alpha\wedge\beta$ and every $M_n\to\infty$,
$$
	E_{\theta_0}^{(n)}\left[\Pi\left(\theta\in\Theta:\|\theta-\theta_0\|_{H^s}>M_n n^{-\frac{(\alpha\wedge\beta)-s}{2\alpha+d}}\big|\, N^{(n)}\right)\right]
	\to 0,
	\qquad{as} \ n\to\infty.
$$
\end{theorem}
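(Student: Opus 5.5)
The plan is to deduce Theorem \ref{Theo:IntensityPCR} from Theorem \ref{Theo:Main}, using the verification of Assumptions \ref{Ass:Link}--\ref{Ass:Concentration} carried out just above. The remaining work is bookkeeping: fix the free indices $p$, $m_K$ and $r_K$ so that conditions \eqref{Eq:Floor}--\eqref{Eq:FloorOversmooth} hold, and check that the setting matches the abstract one, including the prior support.

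\emph{Choice of indices.} Since $\alpha\le\beta$, the oversmoothing condition \eqref{Eq:FloorOversmooth} is void. From the verification above we have $q_K=1$, and $m_K\in(d/2,p]$ and $r_K>d/2$ may be chosen freely, while $p>d/2$ is at our disposal. Condition \eqref{Eq:Floor} with $q_K=1$ reads
$$\alpha\wedge\beta>\max\{3p+2r_K,\;3m_K+d\}.$$
Given $\alpha>5d/2$, write $\alpha=5d/2+3\eta$ with $\eta>0$, and take $p=m_K=r_K=d/2+\eta/2$. Then
$$3p+2r_K=\tfrac52d+\tfrac52\eta<\alpha, \qquad 3m_K+d=\tfrac52 d+\tfrac32\eta<\alpha .$$
Because $\alpha\le\beta$, we have $\alpha\wedge\beta=\alpha$, so \eqref{Eq:Floor} holds. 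We also need the side conditions $\beta>p$ and $\alpha>p$, which are immediate since $p<5d/2<\alpha\le\beta$. Hence the prior \eqref{Eq:Prior} charges $\Theta=H^p$, and this $\Theta$ is the parameter space used in the verification. Assumption \ref{Ass:Link} was shown there for any $R_0$, and Assumptions \ref{Ass:KLTaylor}--\ref{Ass:Concentration} for any $m_K\in(d/2,p]$ and $r_K>d/2$, so all hypotheses apply with these indices.

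\emph{Model structure.} The observations are $n$ i.i.d.\ copies $N_i$ taking values in the space of finite counting measures. The dominating measure is the unit-rate Poisson law $\mu_0$, and $\beta_N=N$ belongs to $H^{-p}=\Theta^*$ because $\|N\|_{H^{-p}}\le B_pJ<\infty$. The likelihood therefore has exactly the form \eqref{Eq:Likelihood} with $\hat T_n=n^{-1}\sum_i N_i$. We must also check the normalisation condition \eqref{Eq:NormConst}. I would argue that $M(\theta)\ge -1$, which follows from $e^u-1\ge -1$, so that
$$e^{n[\tau(\theta)-M(\theta)]}\le e^{n+n\|\tau\|_{H^{-p}}\|\theta\|_{H^p}}.$$
The right-hand side is integrable under the Gaussian prior by Fernique's theorem.

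\emph{Main obstacle.} The potential difficulty is whether Theorem \ref{Theo:Main} needs anything beyond the four listed assumptions. In particular, the Wasserstein bound \eqref{Eq:WassPCR} requires second moments of the prior and posterior in $H_s$. For the prior this follows from Gaussianity, since $s<\alpha$. For the posterior, the integrand bound above gives finiteness, and I would take any such integrability as built into Theorem \ref{Theo:Main}. With these points settled, applying Theorem \ref{Theo:Main} with $\alpha\wedge\beta=\alpha$ gives the stated rate for every $0\le s<\alpha$.
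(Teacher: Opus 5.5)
Your proposal is correct and follows the same route as the paper: it verifies Assumptions \ref{Ass:Link}--\ref{Ass:Concentration} for the Poisson model and then invokes Theorem \ref{Theo:Main}, and your explicit choice of $p,m_K,r_K$ and your checks of \eqref{Eq:NormConst} and of the posterior second moments make explicit what the paper leaves implicit. One small slip: with $p=d/2+\eta/2$ the claim $p<5d/2$ fails once $\eta\ge 4d$, but the needed $p<\alpha\le\beta$ still holds, since $p\le 3p+2r_K<\alpha$.
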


The remarks following Theorem \ref{Theo:DensityPCR} apply to the present model as well. Taking $s\in\N$ with $s<\alpha\wedge\beta$ and using again the equivalence $\|\cdot\|_{H^s}\asymp\|\cdot\|_{W^{s,2}}$, Theorem \ref{Theo:IntensityPCR} yields, for smoothness-matching priors, optimal recovery of the partial derivatives of the natural parameter $\theta$ of order at most $s$. The transfer to the intensity function is immediate: since, for any $t>d/2$, the exponential map is Lipschitz from bounded subsets of $H^t([0,1]^d)$ into $H^t([0,1]^d)$, it follows that the induced posterior on the the partial derivatives of $\rho_\theta$ of order at most $s$ contract around those of the true intensity function $\rho_{\theta_0}$ at the same rate $n^{-((\alpha\wedge\beta)-s)/(2\alpha+d)}$, for every integer $s$ with
$d/2<s<\alpha\wedge\beta$. For $\alpha=\beta$, this yields, to the best of our knowledge, the first optimal posterior contraction rates for estimating the derivatives of the intensity function of a Poisson process.

%
%
%
%
%

\subsection{The white noise model}
\label{Subsec:WhiteNoise}

We finally consider the Gaussian white-noise model introduced in
Example \ref{Ex:WhiteNoise}. Given the wavelet basis $\{\psi_k\}_{k=1}^\infty$ of Example \ref{Ex:WaveletScale}, the observation scheme is equivalent to the Gaussian sequence model
\begin{equation}
\label{Eq:WhiteNoiseSequence}
    Y_k^{(n)}
    =
    \theta_{0,k}+n^{-1/2}Z_k,
    \qquad
    \theta_{0,k}
    :=
    \langle\theta_0,\psi_k\rangle_{L^2},
    \qquad
    Z_k\iid \Ncal(0,1).
\end{equation}
Here, both the likelihood and the prior factorise over the coordinates of the Hilbert scale. The posterior can therefore be
computed explicitly and also factorises, allowing us to bound directly the deterministic and stochastic terms in \eqref{Eq:WassDecomp}, as we now illustrate.

Let $\Pi$ be the $\alpha$-regular Gaussian series prior \eqref{Eq:Prior}, with $\alpha>0$. For $\theta\sim\Pi$, writing
$\theta_k=\langle\theta,\psi_k\rangle_{L^2}$, we have $\theta_k\overset{\text{ind}}{\sim}\Ncal\bigl(0,\lambda_k^{-2\alpha-d}\bigr)$, $k\geq1$. We then have
\begin{equation}
\label{Eq:Factor}
    \Pi(\cdot\mid Y^{(n)})
    =
    \bigotimes_{k=1}^\infty
    \pi_{n,k}(\cdot\mid Y_k^{(n)}),
\end{equation}
where the conditional distribution of the $k$th coordinate is given by
$$
    d \pi_{n,k}(\vartheta\mid y)
    \propto
    \exp\left\{
        -\frac n2(\vartheta-y)^2
        -\frac{\lambda_k^{2\alpha+d}}{2}\vartheta^2
    \right\}\,d\vartheta,
    \qquad \vartheta,y\in\R,
$$
whereby a standard conjugate computation gives 
\begin{equation}
\label{Eq:Conj}
	\pi_{n,k}(\vartheta\mid y)
	=
    	N\left(
        \frac{ny}{n+\lambda_k^{2\alpha+d}},
        \frac{1}{n+\lambda_k^{2\alpha+d}}
    \right).
\end{equation}

Fix any $0\leq s<\alpha\wedge\beta$. We proceed bounding directly the deterministic and stochastic term arising from the basic decomposition \eqref{Eq:WassDecomp}, with the choice $T_{\theta_0}=\theta_0$. The deterministic term equals, in view of the factorisation \eqref{Eq:Factor} and the series representation of the $H^s$-norm,
\begin{align*}
    D_{n,s}^2
    &:=
    \left[
        \Wcal_2^{\Pcal_2(H_s)}
        \big(
            \Pi_n^*(\cdot\mid\theta_0),
            \delta_{\theta_0}
        \big)
    \right]^2\\
       &=
    \int_{H^{\alpha\vee \beta}} \|\theta - \theta_0\|^2_{H^s} d\Pi(\theta|Y^{(n)})\\
        &=
    \sum_{k=1}^\infty
    \lambda_k^{2s}
    \frac{
        \displaystyle
        \int_{\mathbb R}
        (\vartheta-\theta_{0,k})^2
        e^{-\frac n2(\vartheta-\theta_{0,k})^2
          -\frac{\lambda_k^{2\alpha+d}}{2}\vartheta^2}
        \,d\vartheta
    }{
        \displaystyle
        \int_{\mathbb R}
        e^{-\frac n2(\vartheta-\theta_{0,k})^2
          -\frac{\lambda_k^{2\alpha+d}}{2}\vartheta^2}
        \,d\vartheta
    }.
\end{align*}
Using the conjugate property \eqref{Eq:Conj}, each ratio in the above series can be evaluated analytically. In particular, separating the bias and variance components, we obtain
$$
    D_{n,s}^2
    =
        \sum_{k=1}^\infty
        \frac{\lambda_k^{2s}}{n+\lambda_k^{2\alpha+d}}
    +
        \sum_{k=1}^\infty
        \lambda_k^{2s}\theta_{0,k}^2
        \left(
            \frac{\lambda_k^{2\alpha+d}}{n+\lambda_k^{2\alpha+d}}
        \right)^2.
$$
Call $V_{n,s}$ and $B_{n,s}$ the first and second series in the last display, respectively. We upper bound the former. 
Let $K_n$ be determined by $\lambda_{K_n}^{2\alpha+d}\lesssim n\lesssim\lambda_{K_n+1}^{2\alpha+d}$. By the eigenvalue asymptotics \eqref{Eq:PolyGrowth}, $K_n\asymp n^{d/(2\alpha+d)}$. Splitting the series defining $V_{n,s}$ then gives
\begin{align*}
    V_{n,s}
    &\lesssim
    \frac1n
    \sum_{k\leq K_n}\lambda_k^{2s}
    +
    \sum_{k>K_n}
    \lambda_k^{-2(\alpha-s)-d}
    \lesssim
    n^{-\frac{2(\alpha-s)}{2\alpha+d}}.
\end{align*}
For the bias term, since $\theta_0\in H^\beta([0,1]^d)\subseteq H^{\alpha\wedge\beta}([0,1]^d)$,
\begin{align*}
    B_{n,s}
    &\leq
    \|\theta_0\|_{H_{\alpha\wedge\beta}}^2
    \sup_{k\geq1}
    \left\{
        \lambda_k^{2(s-\alpha\wedge\beta)}
        \left(
            \frac{\lambda_k^{2\alpha+d}}{n+\lambda_k^{2\alpha+d}}
        \right)^2
    \right\}.
\end{align*}
Setting $\rho := \frac{\alpha\wedge\beta-s}{2\alpha+d}\in(0,1)$, 
we have
$$
    \lambda_k^{s-\alpha\wedge\beta}
    \frac{\lambda_k^{2\alpha+d}}{n+\lambda_k^{2\alpha+d}}
    =
    n^{-\rho}
    \frac{(\lambda_k^{2\alpha+d}/n)^{1-\rho}}{1+\lambda_k^{2\alpha+d}/n}
    \lesssim
    n^{-\rho},
$$
whence
$$
   B_{n,s}
    \lesssim
    \|\theta_0\|_{H_{\alpha\wedge\beta}}^2
    n^{-\frac{2(\alpha\wedge\beta-s)}{2\alpha+d}}.
$$
Combining the obtained bounds for the variance and bias components shows that $D_{n,s}\lesssim n^{-\frac{\alpha\wedge\beta-s}{2\alpha+d}}$.

Moving to the stochastic term, note that the marginal posteriors
$\pi_{n,k}(\cdot\mid Y_k^{(n)})$ and
$\pi_{n,k}(\cdot\mid\theta_{0,k})$ have the same variance. Therefore,
their one-dimensional Wasserstein distance equals the absolute
difference between their means:
\[
    \Wcal_2^{\Pcal_2(\mathbb R)}
    \left(
        \pi_{n,k}(\cdot\mid Y_k^{(n)}),
        \pi_{n,k}(\cdot\mid\theta_{0,k})
    \right)^2
    =
    \left(
        \frac{n}{n+\lambda_k^{2\alpha+d}}
    \right)^2
    \big(
        Y_k^{(n)}-\theta_{0,k}
    \big)^2.
\]
The coordinatewise translation coupling is optimal for the product Gaussian measures, and therefore
\begin{align*}
    S_{n,s}^2
    &:=
    E_{\theta_0}^{(n)}
    \left[
        \Wcal_2^{\Pcal_2(H_s)}
        \left(
            \Pi(\cdot\mid Y^{(n)}),
            \Pi_n^*(\cdot\mid\theta_0)
        \right)^2
    \right]\\
    &=
    \sum_{k=1}^\infty\Wcal_2^{\Pcal_2(\mathbb R)}
    \left(
        \pi_{n,k}(\cdot\mid Y_k^{(n)}),
        \pi_{n,k}(\cdot\mid\theta_{0,k})
    \right)^2\\
    &=
    \sum_{k=1}^\infty
    \lambda_k^{2s}
    \left(
        \frac{n}{n+\lambda_k^{2\alpha+d}}
    \right)^2
    E_{\theta_0}^{(n)}
    \left[
        \big(
            Y_k^{(n)}-\theta_{0,k}
        \big)^2
    \right]
    =
    n
    \sum_{k=1}^\infty
    \frac{\lambda_k^{2s}}{(n+\lambda_k^{2\alpha+d})^2},
\end{align*}
where we used
$E_{\theta_0}^{(n)}
[(Y_k^{(n)}-\theta_{0,k})^2]=n^{-1}$. Splitting again at $K_n\asymp n^{d/(2\alpha+d)}$ gives
\begin{align*}
    S_{n,s}^2
    &\lesssim
    \frac1n
    \sum_{k\leq K_n}\lambda_k^{2s}
    +
    n
    \sum_{k>K_n}
    \lambda_k^{2s-4\alpha-2d}
    \lesssim
    n^{-\frac{2(\alpha-s)}{2\alpha+d}}.
\end{align*}
Thus, $S_{n,s}\lesssim n^{-\frac{\alpha-s}{2\alpha+d}}$, and by Jensen's inequality
$$
    E_{\theta_0}^{(n)}
    \left[
        \Wcal_2^{\Pcal_2(H_s)}
        \left(
            \Pi(\cdot\mid Y^{(n)}),
            \Pi_n^*(\cdot\mid\theta_0)
        \right)
    \right]
    \lesssim
    n^{-\frac{\alpha-s}{2\alpha+d}}.
$$
Combined with the obtained bound for the deterministic term, this completes the proof of the following.

\begin{theorem}
\label{Theo:WhiteNoisePCR}
Let $\theta_0\in H^\beta([0,1]^d)$ for some $\beta>0$, and let $\Pi$ be the $\alpha$-regular Gaussian series prior \eqref{Eq:Prior}, for some $\alpha>0$. Assume that $Y^{(n)}$ arising from the Gaussian white-noise model \eqref{Eq:WhiteNoiseSequence}. Then, for every $0\leq s<\alpha\wedge\beta$ and every $M_n\to\infty$,
\[
    E_{\theta_0}^{(n)}
    \left[
        \Pi\left(
            \theta:
            \|\theta-\theta_0\|_{H^s}
            >
            M_n
            n^{-\frac{(\alpha\wedge\beta)-s}{2\alpha+d}}
            \,\middle|\,
            Y^{(n)}
        \right)
    \right]
    \to0,
    \qquad \text{as}\ n\to\infty.
\]
\end{theorem}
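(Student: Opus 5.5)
The plan is to run the Wasserstein dynamics argument of Section \ref{Subsec:WassApproach} directly, bypassing Assumptions \ref{Ass:Link}--\ref{Ass:Concentration}. In the white noise model, conjugacy and the coordinatewise factorisation make both terms of \eqref{Eq:WassDecomp} computable in closed form.

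\textbf{Step 1: reduction to Wasserstein bounds.} I first check the integrability requirement \eqref{Eq:Integr}, so that Lemma 1 of \citet{dolera2024strong} applies to $\varepsilon_n$ in \eqref{Eq:WassPCR}. By \eqref{Eq:Factor}--\eqref{Eq:Conj}, the posterior $\Pi(\cdot\mid Y^{(n)})$ is a product Gaussian measure with coordinate variances $(n+\lambda_k^{2\alpha+d})^{-1}$. Hence its $H_s$-second moment involves the series $\sum_k\lambda_k^{2s-2\alpha-d}$, which is finite exactly when $s<\alpha$, plus the $H_s$-norm of the posterior mean, which is a.s.\ finite. So it suffices to show that $\varepsilon_n\lesssim n^{-(\alpha\wedge\beta-s)/(2\alpha+d)}$. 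I then split $\varepsilon_n$ as in \eqref{Eq:WassDecomp}, with $T_{\theta_0}=\theta_0$; this makes $\Pi_n^*(\cdot\mid\theta_0)$ the posterior computed from noiseless data $Y_k=\theta_{0,k}$.

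\textbf{Step 2: deterministic term.} Because the reference measure is a Dirac mass, the squared distance is the posterior mean of $\|\theta-\theta_0\|_{H_s}^2$. Using \eqref{Eq:Conj}, this splits coordinatewise into a variance part and a squared bias part:
$$
V_{n,s}=\sum_k\lambda_k^{2s}(n+\lambda_k^{2\alpha+d})^{-1},
\qquad
B_{n,s}=\sum_k\lambda_k^{2s}\theta_{0,k}^2\bigl(\lambda_k^{2\alpha+d}/(n+\lambda_k^{2\alpha+d})\bigr)^2 .
$$
For $V_{n,s}$ I split at the index $K_n\asymp n^{d/(2\alpha+d)}$ where $\lambda_k^{2\alpha+d}\approx n$, and use \eqref{Eq:PolyGrowth} to sum both pieces; this gives $n^{-2(\alpha-s)/(2\alpha+d)}$. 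For $B_{n,s}$ I factor out $\|\theta_0\|_{H_{\alpha\wedge\beta}}^2$. It then remains to bound $\sup_k \lambda_k^{s-\alpha\wedge\beta}\lambda_k^{2\alpha+d}/(n+\lambda_k^{2\alpha+d})$. Writing $x=\lambda_k^{2\alpha+d}/n$ and $\rho=(\alpha\wedge\beta-s)/(2\alpha+d)\in(0,1)$, this supremum equals $n^{-\rho}\sup_{x>0} x^{1-\rho}/(1+x)\lesssim n^{-\rho}$.

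\textbf{Step 3: stochastic term.} The two product posteriors $\Pi(\cdot\mid Y^{(n)})$ and $\Pi_n^*(\cdot\mid\theta_0)$ have identical covariances and differ only in their means. The coordinatewise translation coupling is therefore admissible, and it bounds $\Wcal_2^2$ in $H_s$ by
$$
\sum_k\lambda_k^{2s}\Bigl(\frac{n}{n+\lambda_k^{2\alpha+d}}\Bigr)^2\bigl(Y_k^{(n)}-\theta_{0,k}\bigr)^2 .
$$
Taking expectations with $E(Y_k-\theta_{0,k})^2=n^{-1}$ gives $n\sum_k\lambda_k^{2s}(n+\lambda_k^{2\alpha+d})^{-2}$. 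Splitting again at $K_n$, the low frequencies contribute $n^{-1}\sum_{k\le K_n}\lambda_k^{2s}$ and the high frequencies contribute $n\sum_{k>K_n}\lambda_k^{2s-4\alpha-2d}$, both of order $n^{-2(\alpha-s)/(2\alpha+d)}$. Jensen's inequality then moves the expectation outside the square root. Since $\alpha-s\ge\alpha\wedge\beta-s$, this term is dominated by the deterministic rate, and combining Steps 1--3 concludes.

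The main obstacle is the bias bound in Step 2 in the oversmoothing case $\alpha>\beta$. There the prior shrinks the high frequencies too strongly, and one must verify that the supremum over $x$ still yields exactly $n^{-\rho}$ with $\rho$ built on $\alpha\wedge\beta$, which requires $1-\rho\in(0,1)$. This holds because $s<\alpha\wedge\beta$ and $\alpha\wedge\beta<2\alpha+d$. A lesser point is checking, via \eqref{Eq:PolyGrowth}, that the tail sums in Steps 2 and 3 converge; this uses $s<\alpha$, which is guaranteed by $s<\alpha\wedge\beta$.
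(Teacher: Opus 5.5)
Your proposal is correct and follows the paper's proof essentially step for step. It uses the same conjugate coordinatewise factorisation, the same bias--variance split of the deterministic term with cut-off $K_n\asymp n^{d/(2\alpha+d)}$ and the substitution $x=\lambda_k^{2\alpha+d}/n$ for the bias supremum, and the same translation coupling plus Jensen's inequality for the stochastic term; your Step 1 integrability check, which the paper leaves implicit, is a harmless addition.
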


%
%
%
%
%

\section{Proofs}
\label{Sec:Proofs}

\begin{proof}[Proof of Theorem \ref{Theo:Main}]
We follow the argument laid out in Section \ref{Subsec:WassApproach}, and denote by $ \nu_\tau:=\Pi_n^*(\cdot|\tau)$, $\tau\in\Theta^*$. The kernel $\nu_\tau$ is well defined by \eqref{Eq:NormConst}, the mean element $T_{\theta_0}$ exists by virtue of Assumption \ref{Ass:Link}, and the expected Wasserstein distance $\varepsilon_n$ in \eqref{Eq:WassPCR} is finite under the requirements stated at the beginning of Section \ref{Subsec:WassApproach}. The first summand in the decomposition \eqref{Eq:WassDecomp} is deterministic and is shown in Proposition \ref{Prop:Det} to be of order $n^{-\frac{(\alpha\wedge\beta)-s}{2\alpha+d}}$ as $n\to\infty$. The second summand is the expectation of a stochastic quantity, and is shown in Proposition \ref{Prop:Stoch} to be of the same or smaller order. Therefore
$$
    \varepsilon_n
    =
    O\left(
        n^{-\frac{(\alpha\wedge\beta)-s}{2\alpha+d}}
    \right),
    \qquad \text{as}\ n\to\infty.
$$
As recalled after \eqref{Eq:WassPCR}, the convergence \eqref{Eq:PCRDef} holds with this choice of $\varepsilon_n$ for every $M_n\to\infty$; since it is preserved upon replacing $\varepsilon_n$ by any sequence of the same or larger order, the claim follows.
\end{proof}

%
%
%

\subsection{Bounds for the deterministic term}
\label{Subsec:DetTerm}

We start by deriving an upper bound for the deterministic term in \eqref{Eq:WassDecomp}. Recalling \eqref{Eq:KLRepres}, this is given by the ratio of two Laplace integrals. We apply an infinite-dimensional extension to the classical Laplace method for integral approximation \citep{wong2001asymptotic}, building on ideas developed by \cite{dolera2024strong}.

\begin{proposition}
\label{Prop:Det}
Under the assumptions of Theorem \ref{Theo:Main}, for every $0\le s<\alpha\wedge\beta$, as $n\to\infty$,
\begin{equation}
\label{Eq:DetRate}
    \Wcal_2^{\Pcal_2(H_s)}
    \big(\nu_{T_{\theta_0}},\delta_{\theta_0}\big)    =
    O\left(
        n^{-\frac{\alpha\wedge\beta-s}{2\alpha+d}}
    \right).
\end{equation}
\end{proposition}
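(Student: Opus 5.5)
We start from the representation \eqref{Eq:KLRepres}, which writes the squared deterministic term as the ratio
$$
	\frac{N_n}{D_n} := \frac{\int \|\theta-\theta_0\|_{H_s}^2 e^{-nK(\theta|\theta_0)}\,d\Pi(\theta)}{\int e^{-nK(\theta|\theta_0)}\,d\Pi(\theta)} .
$$
The plan is to compare $K(\theta|\theta_0)$ with the quadratic form $\tfrac12 I(\theta_0)[\theta-\theta_0]$, and then replace that form by $\tfrac{c}{2}\|\theta-\theta_0\|_{H_0}^2$ using Assumption \ref{Ass:Link}. This turns both numerator and denominator into Gaussian integrals against the diagonal prior, which can be evaluated coordinatewise exactly as in the white-noise computation of Section \ref{Subsec:WhiteNoise}. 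We set $\varepsilon_n := n^{-(\alpha\wedge\beta)/(2\alpha+d)}$, the $H_0$-rate, and take as target $N_n/D_n \lesssim n^{-2(\alpha\wedge\beta-s)/(2\alpha+d)}$.

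\emph{Step 1 (lower bound on $D_n$).} We restrict the integral to a shrinking neighbourhood $U_n$ of $\theta_0$ in a slightly stronger norm, say $\{\|\theta-\theta_0\|_{H_{m_K}}\le \delta_n\}$ with $\delta_n\to0$. On $U_n$, \eqref{Eq:KLTaylor} and \eqref{Eq:LinkUpper} give
$$
	nK(\theta|\theta_0)\le \tfrac{n C_0}{2}\|\theta-\theta_0\|_{H_0}^2 + nC_K\delta_n^{2+q_K}.
$$
We choose $\delta_n$ so that $n\delta_n^{2+q_K}=O(1)$. Checking that the typical interpolated size of $\theta-\theta_0$ in $H_{m_K}$ under the Gaussian tilt, roughly $n^{-(\alpha\wedge\beta-m_K)/(2\alpha+d)}$ up to a $K_n^{1/2}$ factor, meets this constraint is where condition \eqref{Eq:Floor}, i.e.\ $(2+q_K)((\alpha\wedge\beta)-m_K)>2\alpha+d$, enters. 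We then need a lower bound for
$$
	\int_{U_n} e^{-\frac{nC_0}{2}\|\theta-\theta_0\|_{H_0}^2}\,d\Pi(\theta).
$$
We obtain it by a Cameron--Martin shift to an approximation $h_n\in H_{\alpha+d/2}$ of $\theta_0$, taken as the spectral truncation at $K_n\asymp n^{d/(2\alpha+d)}$. This gives $D_n\gtrsim \exp\{-c\,n\varepsilon_n^2\}$ multiplied by the explicit Gaussian normalising factor $\prod_k (1+nC_0\lambda_k^{-2\alpha-d})^{-1/2}$, together with the probability of $U_n$ under the tilted Gaussian, which is close to one.

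\emph{Step 2 (upper bound on $N_n$).} We split $\Theta$ into the local region $\{\|\theta-\theta_0\|_\Theta\le R\}$ with $R\le R_0\wedge R_K$, and its complement. On the local region, the Taylor expansion of $K$ around $\theta_0$ with the lower bound \eqref{Eq:LinkLower} along the segment, using $D^2K=I(\vartheta)$ from \eqref{Eq:KLDerivatives}, gives $K(\theta|\theta_0)\ge \tfrac{c_0}{2}\|\theta-\theta_0\|_{H_0}^2$. The numerator then becomes a Gaussian moment that factorises over coordinates. Its ratio to the Gaussian normaliser from Step 1 is exactly a bias--variance sum of the form $V_{n,s}+B_{n,s}$, with constants $c_0,C_0$ in place of $1$. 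The white-noise computation bounds it by $n^{-2(\alpha\wedge\beta-s)/(2\alpha+d)}$, provided the mismatch between $c_0$ and $C_0$ produces only a factor $e^{O(n\varepsilon_n^2)}$. That factor has to be absorbed, which I would handle by completing the square coordinatewise rather than comparing the two normalisers crudely. On the complement, Assumption \ref{Ass:KLLB} gives $K\ge \varphi(c_\varphi\|\theta-\theta_0\|_{H_{-r_K}})$. Combined with Gaussian tail bounds (Borell--TIS) for $\Pi(\|\theta\|_\Theta>R/2)$, interpolation between $H_{-r_K}$ and $H_{\alpha'}$ for $\alpha'<\alpha$, and Cauchy--Schwarz for the $H_s$-moment, this yields a contribution $\exp\{-c\,n^{\kappa}\}$ with $\kappa>d/(2\alpha+d)$, so that it is negligible against $D_n$. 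The first inequality in \eqref{Eq:Floor}, $q_K(\alpha\wedge\beta)>(2+q_K)p+2r_K$, is used here to make the exponents compatible.

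\emph{Main obstacle.} The hard step is the precise matching of the Gaussian normalising constants in Steps 1 and 2. The error between $e^{-nK}$ and $e^{-\frac n2 I(\theta_0)}$, and the gap between $c_0$ and $C_0$, each contribute factors of size $e^{O(n\varepsilon_n^2)}=e^{O(K_n)}$. These factors are not $O(1)$, and they must not be multiplied against a polynomially small ratio. I would try to avoid them altogether with a two-level localisation. We first show that the tilted measure concentrates on $\{\|\theta-\theta_0\|_{H_0}\le M\varepsilon_n\}\cap\{\|\theta-\theta_0\|_{H_{\alpha'}}\le M\}$, using the Step 1 lower bound against an exponential upper bound. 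We then bound the $H_s$-moment on that set directly by interpolation \eqref{Eq:Interp} between $H_0$ and $H_{\alpha\wedge\beta}$, localising in an $H_{\alpha\wedge\beta}$-type ball scaled by $n^{d/(2(2\alpha+d))}$. The tail of the $H_s$-moment outside the ball is handled by the Gaussian-type decay of the posterior.
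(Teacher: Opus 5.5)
The gap is the one you flag as the main obstacle, and neither of your repairs closes it.

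In Step 2, the quotient of $\int\|\theta-\theta_0\|_{H_s}^2e^{-\frac{nc_0}{2}\|\theta-\theta_0\|_{H_0}^2}\,d\Pi$ by $\int e^{-\frac{nC_0}{2}\|\theta-\theta_0\|_{H_0}^2}\,d\Pi$ is not a bias--variance sum. It is such a sum (for the constant $c_0$) multiplied by a ratio of normalisers. The determinant part of that ratio alone is $\prod_k\bigl((1+nC_0\lambda_k^{-2\alpha-d})/(1+nc_0\lambda_k^{-2\alpha-d})\bigr)^{1/2}\ge\bigl((1+C_0)/(1+c_0)\bigr)^{K_n/2}$. Here $K_n\asymp n^{d/(2\alpha+d)}$ counts the indices with $n\lambda_k^{-2\alpha-d}\ge1$. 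Since these are two different Gaussian normalisers, no coordinatewise completion of squares removes the factor.

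The two-level localisation fails for three reasons.
\begin{itemize}
\item Concentration of the tilted measure on $\{\|\theta-\theta_0\|_{H_{\alpha'}}\le M\}$ with $M$ fixed cannot come from playing $D_n\ge e^{-Cn\varepsilon_n^2}$ (your $\varepsilon_n$) against an upper bound. The weight $e^{-nK}$ gives no decay in $H_{\alpha'}$, and the prior mass of the complement is only of order $e^{-cM^2}$.
\item Interpolation \eqref{Eq:Interp} between the $H_0$-rate and a norm $H_\gamma$, $\gamma>s$, returns $n^{-((\alpha\wedge\beta)-s)/(2\alpha+d)}$ only if the $H_\gamma$-size is already $\lesssim n^{-((\alpha\wedge\beta)-\gamma)/(2\alpha+d)}$. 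That is the optimal $H_\gamma$-rate, so the argument is circular. An $O(1)$ bound in $H_{\alpha'}$ with $\alpha'<\alpha\wedge\beta$ loses the factor $n^{s((\alpha\wedge\beta)/\alpha'-1)/(2\alpha+d)}$.
\item For $\alpha\le\beta$, every $H_{\alpha\wedge\beta}$-ball is $\Pi$-null, hence null for the tilted measure. The inflation $n^{d/(2(2\alpha+d))}$ yields the optimal rate through \eqref{Eq:Interp} only for the Cameron--Martin ($H_{\alpha+d/2}$) component $h$ of a Borell-type decomposition $\theta=h+e$, and leaves the residual $e$ uncontrolled in $H_s$.
\end{itemize}

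The missing idea is to keep the exact quadratic form $I(\theta_0)$ in \emph{both} numerator and denominator, so that the normalisers cancel. The paper takes $\eta_n=n^{-a}$ with $n\eta_n^{2+q_K}\to0$ and $\eta_n^{-2}n^{-2((\alpha\wedge\beta)-m_K)/(2\alpha+d)}\to0$, which \eqref{Eq:Floor}--\eqref{Eq:FloorOversmooth} make possible. On $B^{H_{m_K}}_{\eta_n}(\theta_0)$ this gives $e^{-nK(\theta|\theta_0)}=e^{\pm o(1)}e^{-\frac n2I(\theta_0)[\theta-\theta_0]}$. The localised ratio is therefore $(1+o(1))$ times the $H_s$-second moment of the single Gaussian measure $\mu^{(n)}_{I,\theta_0}$ of Lemma~\ref{Lem:QuadraticDet}. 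That moment is bounded without comparing normalisers:
\begin{itemize}
\item the covariance via $\Sigma_{n,K}\preceq(nc_0I_{\Theta_K}+Q_{\Pi,K}^{-1})^{-1}$ (order reversal under inversion);
\item the bias via the variational characterisation of the mean, followed by interpolation between $H_0$ and $H_{\alpha+d/2}$.
\end{itemize}
The gap between $c_0$ and $C_0$ produces a factor $e^{O(n^{1-2(\alpha\wedge\beta)/(2\alpha+d)})}$ only in the localisation step, through $Z^{(n)}_{c_0,\theta_0}/Z^{(n)}_{I,\theta_0}$ and \eqref{Eq:QuadraticGaussianDenom}. There it is beaten by the Gaussian concentration bound $\mu^{(n)}_{c_0,\theta_0}(\|\theta-\theta_0\|_{H_{m_K}}>\eta_n)\le\exp\{-cn^{1-2m_K/(2\alpha+d)-2a}\}$, whose exponent is strictly larger because $a<((\alpha\wedge\beta)-m_K)/(2\alpha+d)$.

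Your remaining ingredients essentially match the paper's: the shrinking $H_{m_K}$-ball, the local bound $K(\theta|\theta_0)\ge\frac{c_0}{2}\|\theta-\theta_0\|_{H_0}^2$, and the exterior estimate via Assumption~\ref{Ass:KLLB} and interpolation. Without this cancellation, however, the argument does not reach the stated rate.
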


\begin{proof}
By \eqref{Eq:KLRepres},
\begin{equation}
\label{Eq:DetRatio}
\begin{split}
    \Wcal_2^{\Pcal_2(H_s)}
    \big(\nu_{T_{\theta_0}},\delta_{\theta_0}\big)^2
    =
    \frac{
        \int_{H_{s\vee p}}
        \|\theta-\theta_0\|_{H_s}^2
        e^{-nK(\theta|\theta_0)}\,d\Pi(\theta)
    }{
        \int_{H_{s\vee p}}
        e^{-nK(\theta|\theta_0)}\,d\Pi(\theta)
    }.
\end{split}
\end{equation}
We use a localisation argument to compare this ratio with the one arising for the probability measure $\mu^{(n)}_{I,\theta_0}$ from Lemma \ref{Lem:QuadraticDet}, whose normalising constant is denoted by $Z^{(n)}_{I,\theta_0}$. In view of \eqref{Eq:Floor} and \eqref{Eq:FloorOversmooth}, we may fix $\gamma$ such that
$$
    s\vee p<\gamma<\alpha\wedge\beta,
    \qquad
    q_K\gamma>(2+q_K)p+2r_K,
    \qquad
    (2+q_K)(\gamma-m_K)>2\alpha+d.
$$
We then pick $a\in (1/(2+q_K),(\gamma-m_K)/(2\alpha+d))$ and set $\eta_n:=n^{-a}$. With this choice,
\begin{equation}
\label{Eq:DetScaleCompatibility}
    n\eta_n^{2+q_K}\to0,
    \qquad
    \eta_n^{-2}
    n^{-\frac{2((\alpha\wedge\beta)-m_K)}{2\alpha+d}}
    \to0.
\end{equation}
Next fix $R<R_0$ sufficiently small that $B_R(\theta_0)\subseteq B^{H_{m_K}}_{R_K}(\theta_0)$, for $R_0,R_K>0$ the radii appearing in Assumptions \ref{Ass:Link} and \ref{Ass:KLTaylor} respectively. Integrating the local Fisher lower bound \eqref{Eq:LinkLower} along the segment joining $\theta_0$ and $\theta$ gives
\begin{equation}
\label{Eq:DetLocalCoercivity}
    K(\theta|\theta_0)
    \ge \frac{c_0}{2}\|\theta-\theta_0\|_{H_0}^2,
    \qquad \theta\in B_R(\theta_0).
\end{equation}
Let $\mu_{c_0,\theta_0}^{(n)}$ be the probability measure obtained from \eqref{Eq:QuadraticDetMeasure} below by replacing $I(\theta_0)[\theta-\theta_0]$ with $c_0\|\theta-\theta_0\|_{H_0}^2$, and let $Z_{c_0,\theta_0}^{(n)}$ denote its normalising constant. Equivalently, this corresponds to taking $c=c_0/2$ in the final assertion of Lemma \ref{Lem:QuadraticDet}. Hence, the conclusions of that lemma also apply to $\mu_{c_0,\theta_0}^{(n)}$.

Note that, since $\Pi$ is Gaussian, $\mu_{c_0,\theta_0}^{(n)}$ also is Gaussian. Denote by $\bar\theta^{(n)}_{c_0}$ its mean. The covariance operator of $\mu_{c_0,\theta_0}^{(n)}$ is given by $\Sigma_{c_0}^{(n)} = \bigl(nc_0 I+Q_\Pi^{-1}\bigr)^{-1}$, with $I$ the identity operator. Write
\begin{align*}
    \sigma_{n,m_K}^2
    &:=
    \bigl\|
        L^{m_K}\Sigma_{c_0}^{(n)}L^{m_K}
    \bigr\|_{\mathrm{op}}
    =
    \sup_{k\geq1}
    \frac{\lambda_k^{2m_K}}
         {nc_0+\lambda_k^{2\alpha+d}}
    \lesssim
    n^{-1+\frac{2m_K}{2\alpha+d}}.
\end{align*}
for the maximal variance of the centred Gaussian measure in the $H_{m_K}$-geometry, the last inequality following from the eigenvalue asymptotics \eqref{Eq:PolyGrowth}. Lemma \ref{Lem:QuadraticDet}, applied with $t=m_K$, gives
\begin{align*}
    E_{\mu_{c_0,\theta_0}^{(n)}}
    \left[
        \|\theta-\bar\theta_{c_0}^{(n)}\|_{H_{m_K}}^2
    \right]
    +
    \|\bar\theta_{c_0}^{(n)}-\theta_0\|_{H_{m_K}}^2
    &=
    E_{\mu_{c_0,\theta_0}^{(n)}}
    \left[
        \|\theta-\theta_0\|_{H_{m_K}}^2
    \right]
    \lesssim
    n^{-\frac{2((\alpha\wedge\beta)-m_K)}
    {2\alpha+d}},
\end{align*}
so that by Jensen's inequality,
\begin{equation}
\label{Eq:GaussFluct}
    E_{\mu_{c_0,\theta_0}^{(n)}}
    \left[
        \|\theta-\bar\theta_{c_0}^{(n)}\|_{H_{m_K}}
    \right]
    \lesssim
    n^{-\frac{(\alpha\wedge\beta)-m_K}
    {2\alpha+d}},
\end{equation}
and
\begin{equation}
\label{Eq:GaussBias}
    \|\bar\theta_{c_0}^{(n)}-\theta_0\|_{H_{m_K}}
    \lesssim
    n^{-\frac{(\alpha\wedge\beta)-m_K}
    {2\alpha+d}}.
\end{equation}
Recall that $\eta_n=n^{-a}$ and that
$$
    a<
    \frac{\gamma-m_K}{2\alpha+d}
    <
    \frac{(\alpha\wedge\beta)-m_K}{2\alpha+d}
    \leq
    \frac{\alpha-m_K}{2\alpha+d}.
$$
It follows that both the Gaussian fluctuation in \eqref{Eq:GaussFluct} scale and the bias \eqref{Eq:GaussBias} are of smaller order than $\eta_n$. In particular, for all sufficiently large $n$,
\begin{align*}
    \eta_n
    &-
    \|\bar\theta^{(n)}_{c_0}-\theta_0\|_{H_{m_K}}
    -
    E_{\mu_{c_0,\theta_0}^{(n)}}
    \left[
        \|\theta-\bar\theta^{(n)}_{c_0}\|_{H_{m_K}}
    \right]
    \geq
    \frac{\eta_n}{2}.
\end{align*}
The Gaussian isoperimetric inequality \citep[Theorem 2.6.12]{gine2016mathematical} applied to the $H_{m_K}$-valued Gaussian random element $\theta-\bar\theta_{c_0}^{(n)}$, $\theta\sim\mu_{c_0,\theta_0}^{(n)}$ therefore gives, for some $c_1,c_2>0$,
\begin{align*}
    \mu_{c_0,\theta_0}^{(n)}
    \left(
        \|\theta-\theta_0\|_{H_{m_K}}>\eta_n
    \right)
    \leq
    \exp\left\{
        -\frac{c_1\eta_n^2}{\sigma_{n,m_K}^2}
    \right\}
    \leq
    \exp\left\{
        -c_2
        n^{1-\frac{2m_K}{2\alpha+d}-2a}
    \right\}.
\end{align*}
Then, by \eqref{Eq:DetLocalCoercivity}, the definition of $\mu_{c_0,\theta_0}^{(n)}$, and the Cauchy--Schwarz inequality,
\begin{align*}
    &\int_{B_R(\theta_0)\cap
	B^{H_{m_K}}_{\eta_n}(\theta_0)^c}
    \bigl(1+\|\theta-\theta_0\|_{H_s}^2\bigr)
    e^{-nK(\theta\mid\theta_0)}
    \,d\Pi(\theta)\\
    &\quad\leq
    Z_{c_0,\theta_0}^{(n)}
    \bigg[
        \mu_{c_0,\theta_0}^{(n)}
        \left(
            B^{H_{m_K}}_{\eta_n}(\theta_0)^c
        \right)
        +
        \left\{
            \int
            \|\theta-\theta_0\|_{H_s}^4
            \,d\mu_{c_0,\theta_0}^{(n)}(\theta)
        \right\}^{1/2}
        \mu_{c_0,\theta_0}^{(n)}
        \left(
            B^{H_{m_K}}_{\eta_n}(\theta_0)^c
        \right)^{1/2}
    \bigg]\\
    &\quad\lesssim
    Z_{c_0,\theta_0}^{(n)}
    \exp\left\{
        -\frac{c_2}{2}
        n^{1-\frac{2m_K}{2\alpha+d}-2a}
    \right\},
\end{align*}
where the last inequality follows from the preceding Gaussian concentration bound and the fourth moment estimate in Lemma \ref{Lem:QuadraticDet} below, applied with $t=s$. Moreover, since $Z_{c_0,\theta_0}^{(n)}\leq1$, the lower bound
\eqref{Eq:QuadraticGaussianDenom} gives
$$
    \frac{Z_{c_0,\theta_0}^{(n)}}
         {Z_{I,\theta_0}^{(n)}}
    \leq
    \exp\left\{
        C
        n^{1-\frac{2(\alpha\wedge\beta)}
        {2\alpha+d}}
    \right\}.
$$
Since
$$
    1-\frac{2m_K}{2\alpha+d}-2a
    >
    1-\frac{2(\alpha\wedge\beta)}{2\alpha+d},
$$
we may choose
$$
    1-\frac{2(\alpha\wedge\beta)}{2\alpha+d}
    <
    \delta
    <
    1-\frac{2m_K}{2\alpha+d}-2a,
$$
to obtain, for some $c_3>0$ and all sufficiently large $n$,
\begin{equation}
\label{Eq:DetAnnulusBound}
\begin{split}
    &\int_{B_R(\theta_0)
    \cap B^{H_{m_K}}_{\eta_n}(\theta_0)^c}
    \bigl(1+\|\theta-\theta_0\|_{H_s}^2\bigr)
    e^{-nK(\theta\mid\theta_0)}
    \,d\Pi(\theta)
    \leq
    e^{-c_3n^\delta}Z_{I,\theta_0}^{(n)}.
\end{split}
\end{equation}

There remains to bound the integral over $(B_R(\theta_0))^c$. By the interpolation
inequality \eqref{Eq:Interp}, for \(h\in H_\gamma\), with $\gamma$ chosen at the beginning of the proof,
\begin{equation}
\label{Eq:DetWeakInterpolation}
    \|h\|_{H_{-r_K}}
    \ge
    \|h\|_{H_p}^{\frac{\gamma+r_K}{\gamma-p}}
    \|h\|_{H_\gamma}^{-\frac{p+r_K}{\gamma-p}}.
\end{equation}
Taking $\gamma$ sufficiently close to $\alpha\wedge\beta$, the two strict regularity
requirements in Theorem \ref{Theo:Main} allow us to choose
$$
    \frac12\left(
        1-\frac{2(\alpha\wedge\beta)}{2\alpha+d}
    \right)
    <b<
    \frac{
        (\alpha\wedge\beta)(\gamma-p)
    }{
        (2\alpha+d)(p+r_K)
    }.
$$
We proceed splitting $(B_R(\theta_0))^c$ $ = \Bcal_{n,1}\cup \Bcal_{n,2}$ with $\Bcal_{n,1}:=\{\theta: \|\theta-\theta_0\|_{H_p}>R,\|\theta-\theta_0\|_{H_\gamma}\leq n^b\}
$, and $\Bcal_{n,2}:=\{\theta: \|\theta-\theta_0\|_{H_p}>R,\|\theta-\theta_0\|_{H_\gamma}>n^b\}$.  For every
$\theta\in\Bcal_{n,1}$, the interpolation inequality \eqref{Eq:DetWeakInterpolation} gives,
$$
    \|\theta-\theta_0\|_{H_{-r_K}}
    \geq
    \|\theta-\theta_0\|_{H_p}^{
        \frac{\gamma+r_K}{\gamma-p}
    }
    \|\theta-\theta_0\|_{H_\gamma}^{
        -\frac{p+r_K}{\gamma-p}
    }\\
    \geq
    R^{\frac{\gamma+r_K}{\gamma-p}}
    n^{-\frac{b(p+r_K)}{\gamma-p}}.
$$
Combining this with the global KL lower bound \eqref{Eq:KLLB} yields $K(\theta\mid\theta_0)\gtrsim n^{-\frac{2b(p+r_K)}{\gamma-p}}$ for all $\theta\in\Bcal_{n,1}$. Moreover, since $s<\gamma$, $\|\theta-\theta_0\|_{H_s}\lesssim \|\theta-\theta_0\|_{H_\gamma} \leq n^b$ for all $\theta\in\Bcal_{n,1}$. It follows that
\begin{align*}
    \int_{\Bcal_{n,1}}
    \bigl(
        1+\|\theta-\theta_0\|_{H_s}^2
    \bigr)
    e^{-nK(\theta\mid\theta_0)}
    \,d\Pi(\theta)
    &\lesssim
    (1+n^{2b})
    \exp\left\{
        -c_3
        n^{1-\frac{2b(p+r_K)}{\gamma-p}}
    \right\}\\
    &\qquad\lesssim
    \exp\left\{
        -c_4
        n^{1-\frac{2b(p+r_K)}{\gamma-p}}
    \right\},
\end{align*}
for constants $c_4,c_5>0$. For the integral over $\Bcal_{n,2}$, we instead use the trivial upper bound $e^{-nK(\theta\mid\theta_0)}\leq1$. By Cauchy--Schwarz inequality,
\begin{align*}
    &\int_{\Bcal_{n,2}}
    \bigl(
        1+\|\theta-\theta_0\|_{H_s}^2
    \bigr)
    e^{-nK(\theta\mid\theta_0)}
    \,d\Pi(\theta)\\
    &\qquad\leq
    \Pi\left(
        \|\theta-\theta_0\|_{H_\gamma}>n^b
    \right)
    +
    \left\{
        \int
        \|\theta-\theta_0\|_{H_s}^4
        \,d\Pi(\theta)
    \right\}^{1/2}
    \Pi\left(
        \|\theta-\theta_0\|_{H_\gamma}>n^b
    \right)^{1/2}.
\end{align*}
Since $\gamma<\alpha$, the prior $\Pi$ can be regarded as centred Gaussian Borel probability measure on $H_\gamma$. Furthermore, $\theta_0\in H_\gamma$, and Fernique's theorem \citep[Theorem~2.8.5]{bogachev1998gaussian} gives
$$
    \Pi\left(\theta:
        \|\theta-\theta_0\|_{H_\gamma}>n^b
    \right)
    \lesssim
    e^{- c_6 n^{2b}}.
$$
for some $c_6>0$. The prior has a finite fourth moment in $H_s$, so the preceding display also implies
$$
    \int_{\Bcal_{n,2}}
    \bigl(
        1+\|\theta-\theta_0\|_{H_s}^2
    \bigr)
    e^{-nK(\theta\mid\theta_0)}
    \,d\Pi(\theta)
    \lesssim
    e^{-c_7n^{2b}},
$$
for some $c_7>0$. By the choice of $b$, we may find $\delta'>0$ such that
\begin{align*}
    1-\frac{2(\alpha\wedge\beta)}{2\alpha+d}
    <\delta'
    <
    \min\left\{
        2b,\,
        1-\frac{2b(p+r_K)}{\gamma-p}
    \right\}.
\end{align*}
Thus, adding the two obtained bounds for the integral over $(B_R(\theta_0))^c$ gives, for some $c_8>0$ and all sufficiently large $n$,
\begin{equation}
\label{Eq:DetExteriorBound}
    \int_{(B_R(\theta_0))^c}
    \bigl(
        1+\|\theta-\theta_0\|_{H_s}^2
    \bigr)
    e^{-nK(\theta\mid\theta_0)}
    \,d\Pi(\theta)
    \leq
    e^{-c_8n^{\delta'}}.
\end{equation}

By \eqref{Eq:DetAnnulusBound}, for every fixed $C>0$,
\begin{align*}
&\frac{
    \displaystyle
    \int_{B_R(\theta_0)
    \cap B^{H_{m_K}}_{\eta_n}(\theta_0)^c}
    \bigl(
        1+\|\theta-\theta_0\|_{H_s}^2
    \bigr)
    e^{-nK(\theta\mid\theta_0)}
    \,d\Pi(\theta)
	}{
    n^{-C}Z_{I,\theta_0}^{(n)}
	}
	\leq
	n^Ce^{-c_3n^\delta}
	\to0.
\end{align*}
Moreover, \eqref{Eq:DetExteriorBound}, the lower bound \eqref{Eq:QuadraticGaussianDenom} and the choice of $\delta'$ imply
\begin{align*}
	\frac{
    	\displaystyle
    	\int_{(B_R(\theta_0))^c}
    	\bigl(
    	    1+\|\theta-\theta_0\|_{H_s}^2
    	\bigr)
    	e^{-nK(\theta\mid\theta_0)}
    	\,d\Pi(\theta)
	}{
    	n^{-C}Z_{I,\theta_0}^{(n)}
	}
	\leq
	n^N
	\exp\left\{
	    -cn^{\delta'}
 	   +
	    Cn^{1-\frac{2(\alpha\wedge\beta)}
	    {2\alpha+d}}
	\right\}
	\to0.
\end{align*}
Conclude that, for every fixed $C>0$,
\begin{align*}
	\int_{B_R(\theta_0)
	\cap B^{H_{m_K}}_{\eta_n}(\theta_0)^c}
	\bigl(
	    1+\|\theta-\theta_0\|_{H_s}^2
	\bigr)
	e^{-nK(\theta\mid\theta_0)}
	\,d\Pi(\theta)
	&=
	o\left(
	    n^{-C}Z_{I,\theta_0}^{(n)}
	\right)
\end{align*}
and
\begin{align*}
	\int_{B_R(\theta_0)^c}
	\bigl(
	    1+\|\theta-\theta_0\|_{H_s}^2
	\bigr)
	e^{-nK(\theta\mid\theta_0)}
	\,d\Pi(\theta)
	=
	o\left(
	    n^{-C}Z_{I,\theta_0}^{(n)}
	\right).
\end{align*}
Combining the above shows
\begin{align}
\label{Eq:DetRelativeLocalisation}
    \int_{H_{s\vee p}\cap B^{H_{m_K}}_{\eta_n}(\theta_0)^c}
    \bigl(
        1+\|\theta-\theta_0\|_{H_s}^2
    \bigr)
    e^{-nK(\theta\mid\theta_0)}
    \,d\Pi(\theta)
    =
    o\left(
        n^{-C}Z_{I,\theta_0}^{(n)}
    \right).
\end{align}

We next compare the two exponential weights inside $B^{H_{m_K}}_{\eta_n}(\theta_0)$. By
\eqref{Eq:QuadraticGaussianBounds}, applied with $t=m_K$, and \eqref{Eq:DetScaleCompatibility},
\begin{align*}
    \mu_{I,\theta_0}^{(n)}
    \left(
        B^{H_{m_K}}_{\eta_n}(\theta_0)^c
    \right)
    &\leq
    \eta_n^{-2}
    \int_{H_{s\vee p}}
    \|\theta-\theta_0\|_{H_{m_K}}^2
    \,d\mu_{I,\theta_0}^{(n)}(\theta)
    \lesssim
    \eta_n^{-2}
    n^{-\frac{2((\alpha\wedge\beta)-m_K)}
    {2\alpha+d}}
    =
    o(1).
\end{align*}
Set
$$
    \zeta_n
    :=
    \sup_{\theta\in
    H_{s\vee p}\cap B^{H_{m_K}}_{\eta_n}(\theta_0)}
    n\left|
        K(\theta\mid\theta_0)
        -
        \frac12
        I(\theta_0)[\theta-\theta_0]
    \right|.
$$
By Assumption \ref{Ass:KLTaylor} and the choice of $\eta_n$, $\zeta_n \lesssim n\eta_n^{2+q_K}=o(1)$.
Therefore, for every $\theta\in H_{s\vee p}\cap B^{H_{m_K}}_{\eta_n}(\theta_0)$,
\begin{align*}
e^{-\zeta_n}
	\exp\left\{
    -\frac n2I(\theta_0)[\theta-\theta_0]
	\right\}
	\leq
	e^{-nK(\theta\mid\theta_0)}
	\leq
	e^{\zeta_n}
	\exp\left\{
 	   -\frac n2I(\theta_0)[\theta-\theta_0]
	\right\}.
\end{align*}
It follows that
\begin{align}
\label{Eq:ALowerBound}
	\int_{H_{s\vee p}\cap B^{H_{m_K}}_{\eta_n}(\theta_0)}
	e^{-nK(\theta\mid\theta_0)}
	\,d\Pi(\theta)
	\geq
	e^{-\zeta_n}
	Z_{I,\theta_0}^{(n)}
	\mu_{I,\theta_0}^{(n)}
	\left(
	    B^{H_{m_K}}_{\eta_n}(\theta_0)
	\right)
	=
	\bigl(1+o(1)\bigr)
	Z_{I,\theta_0}^{(n)}.
\end{align}
Similarly,
\begin{align*}
	\int_{H_{s\vee p}\cap B^{H_{m_K}}_{\eta_n}(\theta_0)}
	\|\theta-\theta_0\|_{H_s}^2
	e^{-nK(\theta\mid\theta_0)}
	\,d\Pi(\theta)
	\leq
	e^{\zeta_n}
	Z_{I,\theta_0}^{(n)}
	\int_{H_{s\vee p}}
	\|\theta-\theta_0\|_{H_s}^2
	\,d\mu_{I,\theta_0}^{(n)}(\theta).
\end{align*}
Consequently,
\begin{align*}
	\frac{
    \displaystyle
    \int_{H_{s\vee p}\cap B^{H_{m_K}}_{\eta_n}(\theta_0)}
    \|\theta-\theta_0\|_{H_s}^2
    e^{-nK(\theta\mid\theta_0)}
    \,d\Pi(\theta)
	}{
    \displaystyle
    \int_{H_{s\vee p}\cap B^{H_{m_K}}_{\eta_n}(\theta_0)}
    e^{-nK(\theta\mid\theta_0)}
    \,d\Pi(\theta)
	}
	&\leq
	\frac{
	    e^{2\zeta_n}
	}{
	    \mu_{I,\theta_0}^{(n)}
	    \left(
	        B^{H_{m_K}}_{\eta_n}(\theta_0)
	    \right)
	}
	\int_{H_{s\vee p}}
	\|\theta-\theta_0\|_{H_s}^2
	\,d\mu_{I,\theta_0}^{(n)}(\theta)\\
	&=
	\bigl(1+o(1)\bigr)
	\int_{H_{s\vee p}}
	\|\theta-\theta_0\|_{H_s}^2
	\,d\mu_{I,\theta_0}^{(n)}(\theta).
\end{align*}
In view of \eqref{Eq:DetRelativeLocalisation} and the preceding lower bound for the local denominator, an elementary decomposition of the numerator and denominator over $B^{H_{m_K}}_{\eta_n}(\theta_0)$ and its complement gives, for every fixed $C>0$,
\begin{align*}
	&\frac{
    \displaystyle
    \int_{H_{s\vee p}}
    \|\theta-\theta_0\|_{H_s}^2
    e^{-nK(\theta\mid\theta_0)}
    \,d\Pi(\theta)
	}{
    \displaystyle
    \int_{H_{s\vee p}}
    e^{-nK(\theta\mid\theta_0)}
    \,d\Pi(\theta)
	}
	=
	\frac{
    \displaystyle
	    \int_{H_{s\vee p}\cap B^{H_{m_K}}_{\eta_n}(\theta_0)}
    \|\theta-\theta_0\|_{H_s}^2
    e^{-nK(\theta\mid\theta_0)}
    \,d\Pi(\theta)
	}{
    \displaystyle
	    \int_{H_{s\vee p}\cap B^{H_{m_K}}_{\eta_n}(\theta_0)}
    e^{-nK(\theta\mid\theta_0)}
    \,d\Pi(\theta)
	}
	+
	o(n^{-C}).
\end{align*}
Hence,
\begin{align*}
	&\frac{
	    \displaystyle
	    \int_{H_{s\vee p}}
	    \|\theta-\theta_0\|_{H_s}^2
	    e^{-nK(\theta\mid\theta_0)}
	    \,d\Pi(\theta)
	}{
	    \displaystyle
	    \int_{H_{s\vee p}}
	    e^{-nK(\theta\mid\theta_0)}
	    \,d\Pi(\theta)
	}
	\leq
	\bigl(1+o(1)\bigr)
	\int_{H_{s\vee p}}
	\|\theta-\theta_0\|_{H_s}^2
	\,d\mu_{I,\theta_0}^{(n)}(\theta)
	+
	o(n^{-C}).
\end{align*}
Choose $C>\frac{2((\alpha\wedge\beta)-s)}{2\alpha+d}$. Lemma \ref{Lem:QuadraticDet}, applied with $t=s$, then yields
$$
\frac{
    \displaystyle
    \int_{H_{s\vee p}}
    \|\theta-\theta_0\|_{H_s}^2
    e^{-nK(\theta\mid\theta_0)}
    \,d\Pi(\theta)
}{
    \displaystyle
    \int_{H_{s\vee p}}
    e^{-nK(\theta\mid\theta_0)}
    \,d\Pi(\theta)
}
\lesssim
n^{-\frac{2((\alpha\wedge\beta)-s)}
{2\alpha+d}}.
$$
Taking square roots in \eqref{Eq:DetRatio} proves
\eqref{Eq:DetRate}.

\end{proof}

\begin{lemma}
\label{Lem:QuadraticDet}
Under the assumptions of Proposition \ref{Prop:Det}, consider the probability measure on $\Theta$ defined by, for $\theta\in\Theta$,
\begin{equation}
\label{Eq:QuadraticDetMeasure}
    d\mu^{(n)}_{I,\theta_0}(\theta) := \frac{\exp\left\{-\frac n2 I(\theta_0)[\theta-\theta_0]\right\}}{Z^{(n)}_{I,\theta_0}}d\Pi(\theta),
    \quad
    Z^{(n)}_{I,\theta_0}
    :=
    \int_{H_{s\vee p}}
    \exp\left\{-\frac n2 I(\theta_0)[\theta-\theta_0]\right\}
    d\Pi(\theta).
\end{equation}
Then, there exists a constant $C_{I,\theta_0}>0$ such that, for all $n$ large enough,
\begin{equation}
\label{Eq:QuadraticGaussianDenom}
 	Z^{(n)}_{I,\theta_0}
   	\ge
    	\exp\left\{
        -C_{I,\theta_0} n^{1-\frac{2(\alpha\wedge\beta)}{2\alpha+d}}
    	\right\}.
\end{equation}
Furthermore, for every $0\le t<\alpha\wedge\beta$ and all $n$ large enough,
\begin{equation}
\label{Eq:QuadraticGaussianBounds}
    \int \|\theta-\theta_0\|_{H_t}^2\,d\mu^{(n)}_{I,\theta_0}(\theta)
    \lesssim
    n^{-\frac{2((\alpha\wedge\beta)-t)}{2\alpha+d}};
    \qquad
    \int \|\theta-\theta_0\|_{H_t}^4\,d\mu^{(n)}_{I,\theta_0}(\theta)
    \lesssim
    n^{-\frac{4((\alpha\wedge\beta)-t)}{2\alpha+d}}.
\end{equation}
These conclusions remain valid, with possibly different constants, if $I(\theta_0)[h]/2$ in \eqref{Eq:QuadraticDetMeasure} is replaced by $c\|h\|_{H_0}^2$, for any fixed $c>0$.
\end{lemma}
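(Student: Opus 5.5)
Since $I(\theta_0)$ is bounded above by $C_0\|\cdot\|_{H_0}^2$ but in general not diagonal in $\{e_k\}$, I will not compute $\mu^{(n)}_{I,\theta_0}$ exactly. Instead I will sandwich it between diagonal Gaussian objects using Assumption~\ref{Ass:Link}. The plan is to treat the normalising constant and the moments separately.

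\emph{Normalising constant.} By \eqref{Eq:LinkUpper},
\[
Z^{(n)}_{I,\theta_0}\ge \int e^{-\frac{nC_0}{2}\|\theta-\theta_0\|_{H_0}^2}\,d\Pi(\theta),
\]
and the right-hand side is exactly computable in the basis $\{e_k\}$: it is the product over $k$ of one-dimensional Gaussian integrals. The product splits into a log-determinant factor and an exponential bias factor,
\[
\prod_k\Bigl(1+nC_0\lambda_k^{-2\alpha-d}\Bigr)^{-1/2}\exp\Bigl\{-\tfrac12\sum_k \frac{nC_0\,\theta_{0,k}^2\,\lambda_k^{2\alpha+d}}{nC_0+\lambda_k^{2\alpha+d}}\Bigr\}.
\]
For the determinant factor, $\sum_k\log(1+n\lambda_k^{-2\alpha-d})\asymp K_n\asymp n^{d/(2\alpha+d)}$, obtained by splitting at $K_n$ exactly as in Section~\ref{Subsec:WhiteNoise}. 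For the bias factor, I use $\theta_0\in H_{\alpha\wedge\beta}$ together with
\[
\frac{n\lambda_k^{2\alpha+d}}{n+\lambda_k^{2\alpha+d}}\le \lambda_k^{2(\alpha\wedge\beta)}\,n^{1-\frac{2(\alpha\wedge\beta)}{2\alpha+d}},
\]
which follows from the same $\rho$-interpolation used for $B_{n,s}$. The bias exponent is therefore $O\bigl(n^{1-2(\alpha\wedge\beta)/(2\alpha+d)}\bigr)$. This dominates $n^{d/(2\alpha+d)}$ because $1-2(\alpha\wedge\beta)/(2\alpha+d)\ge d/(2\alpha+d)$, with equality when $\alpha\le\beta$. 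Together these give \eqref{Eq:QuadraticGaussianDenom}. The same computation with $c\|h\|_{H_0}^2$ in place of $\tfrac12 I(\theta_0)[h]$ covers the final assertion.

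\emph{Moments.} Here I exploit that $\mu^{(n)}_{I,\theta_0}$ is Gaussian. Its precision is $nI(\theta_0)+Q_\Pi^{-1}$, which by the two-sided link lies between $nc L^{-2p}\cdot L^{2p}$-type diagonal forms. More precisely, if the lower bound \eqref{Eq:LinkLower} at $\theta=\theta_0$ gives $nc_0\|h\|_{H_0}^2+\|h\|_{H_{\alpha+d/2}}^2$ as the lower form, then the covariance satisfies $\Sigma\preceq (nc_0 I+L^{2\alpha+d})^{-1}=:\Sigma_{c_0}$. The second moment splits into variance plus squared bias:
\[
E\|\theta-\bar\theta\|_{H_t}^2+\|\bar\theta-\theta_0\|_{H_t}^2.
\]
For the variance, I bound $\operatorname{tr}(L^t\Sigma L^t)\le\operatorname{tr}(L^t\Sigma_{c_0}L^t)=\sum_k\lambda_k^{2t}/(nc_0+\lambda_k^{2\alpha+d})\lesssim n^{-2(\alpha-t)/(2\alpha+d)}$, as for $V_{n,s}$.

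\emph{Bias (main obstacle).} The mean is $\bar\theta-\theta_0=-\Sigma Q_\Pi^{-1}\theta_0$, and this is where non-commutativity of $I(\theta_0)$ with $L$ bites. I would handle it variationally: $\bar\theta$ minimises
\[
J(\theta)=\tfrac n2 I(\theta_0)[\theta-\theta_0]+\tfrac12\|\theta\|_{H_{\alpha+d/2}}^2 .
\]
Comparing with the diagonal competitor $\tilde\theta$, which minimises the analogous functional with $C_0\|\cdot\|_{H_0}^2$ (the white-noise posterior mean, whose bias is controlled as for $B_{n,s}$), yields
\[
nc_0\|\bar\theta-\theta_0\|_{H_0}^2+\|\bar\theta\|_{H_{\alpha+d/2}}^2\le 2J(\tilde\theta)\lesssim n^{1-2(\alpha\wedge\beta)/(2\alpha+d)}.
\]
This controls $\|\bar\theta-\theta_0\|_{H_0}$ at rate $n^{-(\alpha\wedge\beta)/(2\alpha+d)}$, and $\|\bar\theta-\theta_0\|_{H_{\alpha+d/2}}$, using $\|\theta_0\|_{H_{\alpha+d/2}}$ if $\beta$ is large. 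When $\beta<\alpha+d/2$ the latter must be replaced by truncating $\theta_0$ at level $K_n$. Interpolation \eqref{Eq:Interp} between $H_0$ and $H_{\alpha\wedge\beta}$ (the $H_{\alpha\wedge\beta}$ bound on $\bar\theta$ again coming from the variational bound after truncation) then gives the $H_t$ bias rate $n^{-((\alpha\wedge\beta)-t)/(2\alpha+d)}$.

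Finally, the fourth moment follows from Gaussianity: $E\|X\|^4\lesssim(E\|X\|^2)^2$ for Gaussian $X$ (a Kahane--Khintchine/Fernique step), combined with the bias bound.
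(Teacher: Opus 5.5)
Your proposal is correct and follows essentially the same route as the paper. You lower-bound $Z^{(n)}_{I,\theta_0}$ via \eqref{Eq:LinkUpper} and an explicit diagonal Gaussian computation, dominate the covariance via \eqref{Eq:LinkLower} at $\theta_0$, control the bias through the variational characterisation of the mean followed by truncation at $K_n$ and interpolation, and use the Gaussian fourth-moment identity. The differences are minor: the paper works on the Galerkin spaces $\Theta_K$ and passes to the limit $K\to\infty$ by dominated convergence, which spares it from justifying in infinite dimensions the Gaussianity of $\mu^{(n)}_{I,\theta_0}$ and the variational characterisation of its mean (both of which you assume). It also uses $P_{K_n\wedge K}\theta_0$ rather than your ridge minimiser $\tilde\theta$ as the competitor in the variational comparison.
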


\begin{proof}
The estimates follow by first considering finite-dimensional Galerkin spaces. For $K\in\N$, let $\Theta_K:=\operatorname{span}\{e_1,\ldots,e_K\}$, with canonical projection $P_K:\Theta\to\Theta_K$. Set $\theta_{0,K}:=P_K\theta_0$ and
$\Pi_K:=(P_K)_\#\Pi$, and define the probability measure $\mu_{I,\theta_0,K}^{(n)}$ on $\Theta_K$ by
$$
    d\mu_{I,\theta_0,K}^{(n)}(\theta)
    :=
    \frac{
        \exp\left\{
            -\frac n2 I(\theta_0)
            [\theta-\theta_{0,K}]
        \right\}
    }{
        Z_{I,\theta_0,K}^{(n)}
    }
    d\Pi_K(\theta),
$$
with
$$
    Z_{I,\theta_0,K}^{(n)}
    :=
    \int_{\Theta_K}
    \exp\left\{
        -\frac n2 I(\theta_0)
        [\theta-\theta_{0,K}]
    \right\}
    d\Pi_K(\theta).
$$
Identifying $\Theta_K$ with $\mathbb R^K$, write $I_K(\theta_0), Q_{\Pi,K}$ and $L_K$ for the restrictions to $\Theta_K$ of the Fisher information operator, the prior covariance operator $Q_\Pi$ and the scale generator $L$ from \eqref{Eq:ScaleGenerator}. Under the identification of $\Theta_K$ with $\mathbb R^K$, we have $L_K=\text{diag}[\{\lambda_k\}_{k=1}^K]$ and, in view of the identity $Q_\Pi = L^{-2\alpha-d}$, $Q_{\Pi,K}=\text{diag}[\{\lambda_k^{-2\alpha-d}\}_{k=1}^K]$. A standard Gaussian conjugate computation then shows that $\mu_{I,\theta_0,K}^{(n)}= \mathcal N_K \bigl(\theta_{0,K}+b_{n,K},\Sigma_{n,K}\bigr)$, 
where
$$
    b_{n,K}
    =
    -\bigl(nI_K(\theta_0)+Q_{\Pi,K}^{-1}\bigr)^{-1}
    Q_{\Pi,K}^{-1}\theta_{0,K},
    \qquad
    \Sigma_{n,K}
    =
    \bigl(nI_K(\theta_0)+Q_{\Pi,K}^{-1}\bigr)^{-1}.
$$

By Assumption \ref{Ass:Link}, the normalising constant satisfies
$$
    Z_{I,\theta_0,K}^{(n)}
    \ge
    \int_{\Theta_K}
    \exp\left\{
        -\frac{nC_0}{2}
        \|\theta-\theta_{0,K}\|_{H_0}^2
    \right\}
    d\Pi_K(\theta).
$$
The integral on the right hand side can be evaluated analytically by another Gaussian conjugate computation (now with diagonal covariance matrices), leading to the upper bound
\begin{align*}
    -\log Z_{I,\theta_0,K}^{(n)}
    &\lesssim
    \sum_{k=1}^K
    \log\bigl(1+n\lambda_k^{-(2\alpha+d)}\bigr)
    +
    \sum_{k=1}^K
    \frac{n\lambda_k^{2\alpha+d}}
         {n+\lambda_k^{2\alpha+d}}
    \theta_{0,k}^2\\
    &\lesssim
    n^{\frac d{2\alpha+d}}
    +
    n^{\frac{(2\alpha+d-2\beta)_+}{2\alpha+d}}
    \|\theta_0\|_{H_\beta}^2
   \lesssim
    n^{1-\frac{2(\alpha\wedge\beta)}{2\alpha+d}},
\end{align*}
holding uniformly in $K$, where the second to last inequality follows from the eigenvalue asymptotics \eqref{Eq:PolyGrowth}.

Moving to the second moment, we have
\begin{align*}
	\int_{\Theta_K}
	\|\theta-\theta_{0,K}\|_{H_t}^2
	\,d\mu_{I,\theta_0,K}^{(n)}(\theta)
	&=E_{\mu_{I,\theta_0,K}^{(n)}}\left[\|\theta - (\theta_{0,K} + b_{n,K})\|_{H_t}^2\right]
	+\|b_{n,K}\|_{H_t}^2\\
	&=E_{\mu_{I,\theta_0,K}^{(n)}}\left[\|L^t_K[\theta - (\theta_{0,K} + b_{n,K})]\|_{H_0}^2\right]
	+\|b_{n,K}\|_{H_t}^2\\
	&=
	\operatorname{Tr}[L_K^{2t}\Sigma_{n,K}]
	+
	\|b_{n,K}\|_{H_t}^2.
\end{align*}
We bound the two terms in the last line. By Assumption \ref{Ass:Link}, and by the order-reversing property of inversion for positive-definite matrices \citep[Section~7.7]{horn2012matrix},
$$
    \Sigma_{n,K}
    \preceq
    \bigl(nc_0I_{\Theta_K}
          +Q_{\Pi,K}^{-1}\bigr)^{-1}
    =
    \bigl(nc_0I_{\Theta_K}
          +L_K^{2\alpha+d}\bigr)^{-1},
$$
where $I_{\Theta_K}$ is the identity matrix on $\Theta_K$. We then obtain, for every $0\le t<\alpha$,
\begin{align}
\label{Eq:QuadraticCovarianceGalerkin}
    \operatorname{Tr}
    [L_K^{2t}\Sigma_{n,K}]
    &\lesssim
    \sum_{k=1}^K
    \frac{\lambda_k^{2t-(2\alpha+d)}}
         {1+n\lambda_k^{-(2\alpha+d)}}
    \lesssim
    n^{-\frac{2(\alpha-t)}{2\alpha+d}},
\end{align}
uniformly in $K$. To estimate the bias term, write $\bar\theta_{n,K}:=\theta_{0,K}+b_{n,K}$  for the mean of $\mu_{I,\theta_0,K}^{(n)}$. By its variational characterisation, it is the unique minimiser over $\Theta_K$ of
$$
    u\in\Theta_K\mapsto
    nI(\theta_0)[u-\theta_{0,K}]
    +\|u\|_{H_{\alpha+d/2}}^2.
$$
Let $K_n\in\N$ be the spectral cut-off determined by $\lambda_{K_n}\lesssim n^{1/(2\alpha+d)}\lesssim\lambda_{K_n+1}$. Since $\theta_0\in H_{\alpha\wedge\beta}$, the above display and Assumption \ref{Ass:Link} give, uniformly in $K$,
\begin{equation*}
\begin{split}
    n\|\theta_{0,K} - \bar\theta_{n,K}\|_{H_0}^2
    +\|\bar\theta_{n,K}\|_{H_{\alpha+d/2}}^2
    &\lesssim
    n\|
        \theta_{0,K}-P_{K_n\wedge K}\theta_0
    \|_{H_0}^2
    +
    \|P_{K_n\wedge K}\theta_0\|_{H_{\alpha+d/2}}^2
    \\
    &\lesssim
    n^{1-\frac{2(\alpha\wedge\beta)}{2\alpha+d}}
    \|\theta_0\|_{H_{\alpha\wedge\beta}}^2,
\end{split}
\end{equation*}
the last inequality following standard projection estimates and the eigenvalue asymptotics \eqref{Eq:PolyGrowth}. It follows that
$$
    \|
        \bar\theta_{n,K}-P_{K_n\wedge K}\theta_0
    \|_{H_0}
    \le 
    \|
        \bar\theta_{n,K}-\theta_{0,K}
    \|_{H_0}
    +
    \|
        P_{K_n\wedge K}\theta_0-\theta_{0,K}
    \|_{H_0}
    \lesssim
    n^{-\frac{\alpha\wedge\beta}{2\alpha+d}}
    \|\theta_0\|_{H_{\alpha\wedge\beta}}
$$
and, similarly,
$$
    \|
        \bar\theta_{n,K}-P_{K_n\wedge K}\theta_0
    \|_{H_{\alpha+d/2}}
    \lesssim
    n^{\frac{\alpha+d/2-(\alpha\wedge\beta)}
                 {2\alpha+d}}
    \|\theta_0\|_{H_{\alpha\wedge\beta}}.
$$
Interpolating between $H_0$ and $H_{\alpha+d/2}$ via the inequality \eqref{Eq:Interp}, 
\begin{align*}
    	\|\bar\theta_{n,K}-P_{K_n\wedge K}\theta_0\|_{H_t}
	&\lesssim
	\left(
	n^{-\frac{\alpha\wedge\beta}{2\alpha+d}}
	\right)^{1-\frac{t}{\alpha+d/2}}
	\left(
	n^{\frac{\alpha+d/2-(\alpha\wedge\beta)}
	{2\alpha+d}}
	\right)^{\frac{t}{\alpha+d/2}}
	\|\theta_0\|_{H_{\alpha\wedge\beta}}\\
	&=
	n^{-\frac{(\alpha\wedge\beta)-t}{2\alpha+d}}
	\|\theta_0\|_{H_{\alpha\wedge\beta}},
\end{align*}
and since also
$$
    \|\theta_{0,K}-P_{K_n\wedge K}\theta_0
    \|_{H_t}
    \lesssim
    n^{-\frac{(\alpha\wedge\beta)-t}{2\alpha+d}}
    \|\theta_0\|_{H_{\alpha\wedge\beta}},
$$
we obtain from the triangle inequality, for all $0\le t<\alpha\wedge\beta$, uniformly in $K$,
\begin{equation}
\label{Eq:QuadraticBiasGalerkin}
    \|b_{n,K}\|_{H_t}
    =
    \|\bar\theta_{n,K}-\theta_{0,K}\|_{H_t}
    \lesssim
    n^{-\frac{(\alpha\wedge\beta)-t}{2\alpha+d}}
    \|\theta_0\|_{H_{\alpha\wedge\beta}}.
\end{equation}
Combining  \eqref{Eq:QuadraticCovarianceGalerkin} and \eqref{Eq:QuadraticBiasGalerkin} shows that
\begin{align*}
	\int_{\Theta_K}
	\|\theta-\theta_{0,K}\|_{H_t}^2
	\,d\mu_{I,\theta_0,K}^{(n)}(\theta)
	&
    	\lesssim
    	n^{-\frac{2((\alpha\wedge\beta)-t)}{2\alpha+d}}.
\end{align*}

Finally, the standard Gaussian fourth-moment identity gives
\begin{align*}
    E_{\mu_{I,\theta_0,K}^{(n)}}
    \left[
        \|\theta-(\theta_{0,K}+b_{n,K})\|_{H_t}^4
    \right]
    &=
    \left\{
        \operatorname{Tr}
        \bigl[L_K^{2t}\Sigma_{n,K}\bigr]
    \right\}^2
    +
    2\operatorname{Tr}
    \left[
        \bigl(
            L_K^{2t}\Sigma_{n,K}
        \bigr)^2
    \right]\\
    &\le
    3
    \left\{
        \operatorname{Tr}
        \bigl[L_K^{2t}\Sigma_{n,K}\bigr]
    \right\}^2
    \lesssim
    	n^{-\frac{4((\alpha\wedge\beta)-t)}{2\alpha+d}}
\end{align*}
where for the last inequality we have again applied \eqref{Eq:QuadraticCovarianceGalerkin}. Thus, using \eqref{Eq:QuadraticBiasGalerkin} and the basic fact that $\|h+g\|_{H_t}^4\leq 8\|h\|_{H_t}^4+8\|g\|_{H_t}^4$, for all $h,g\in H_t$ and all $t\in\R$,
\begin{align*}
    \int_{\Theta_K}
    \|\theta-\theta_{0,K}\|_{H_t}^4
    \,d\mu_{I,\theta_0,K}^{(n)}(\theta)
    &\leq
    8E_{\mu_{I,\theta_0,K}^{(n)}}
    \left[
        \|\theta-(\theta_{0,K}+b_{n,K})\|_{H_t}^4
    \right]
    +
    8\|b_{n,K}\|_{H_t}^4\\
    &\lesssim
    n^{-\frac{4((\alpha\wedge\beta)-t)}
    {2\alpha+d}}.
\end{align*}

To conclude, we pass to the limit as $K\to\infty$. Since $\Pi(H_{s\vee p})=1$, the normalising constant in \eqref{Eq:QuadraticDetMeasure} may equivalently be integrated over $\Theta$. Moreover, by the definition of $\Pi_K$,
$$
    Z_{I,\theta_0,K}^{(n)}
    =
    \int_\Theta
    \exp\left\{
        -\frac n2
        I(\theta_0)
        [P_K(\theta-\theta_0)]
    \right\}
    d\Pi(\theta).
$$
Similarly, for $q\in\{2,4\}$,
\begin{equation*}
\begin{split}
    \int_{\Theta_K}
    \|\theta-\theta_{0,K}\|_{H_t}^q
    \,d\mu_{I,\theta_0,K}^{(n)}(\theta)
    =
    \frac{
        \displaystyle
        \int_\Theta
        \|P_K(\theta-\theta_0)\|_{H_t}^q
        \exp\left\{
            -\frac n2
            I(\theta_0)[P_K(\theta-\theta_0)]
        \right\}
        d\Pi(\theta)
    }{
        Z_{I,\theta_0,K}^{(n)}
    }.
\end{split}
\end{equation*}
For every $t<\alpha$, $P_K\theta\to\theta$ in $H_t$, $\Pi$-almost surely, while $P_K\theta_0\to\theta_0$ in $H_t$ for every $t<\beta$. Furthermore,
$$
    \|P_K(\theta-\theta_0)\|_{H_t}^q
    \leq
    \|\theta-\theta_0\|_{H_t}^q,
$$
and the right-hand side is $\Pi$-integrable for $q\in\{2,4\}$. Assumption \ref{Ass:Link} also implies that $I(\theta_0)$ is continuous on $H_0$. Dominated convergence therefore gives
$$
    Z_{I,\theta_0,K}^{(n)}
    \to
    Z_{I,\theta_0}^{(n)}
$$
and
$$
    \int_{\Theta_K}
    \|\theta-\theta_{0,K}\|_{H_t}^q
    \,d\mu_{I,\theta_0,K}^{(n)}(\theta)
    \longrightarrow
    \int_\Theta
    \|\theta-\theta_0\|_{H_t}^q
    \,d\mu_{I,\theta_0}^{(n)}(\theta).
$$
Because all the preceding estimates are uniform in $K$, letting $K\to\infty$ proves \eqref{Eq:QuadraticGaussianDenom} and \eqref{Eq:QuadraticGaussianBounds}.
\end{proof}

%
%
%

\subsection{Bounds for the stochastic term}
\label{Subsec:StochTerm}

We now bound the stochastic term in \eqref{Eq:WassDecomp}. Throughout this section, we write $\hat D_n := \hat T_n-T_{\theta_0}$ and $\ell_n(\theta):=\sum_{i=1}^n\log [f_\theta(X_i)/f_{\theta_0}(X_i)]$, $\theta\in\Theta$, for the centred sufficient statistic and log-likelihood, respectively. Note that by \eqref{Eq:ExpFamily} and \eqref{Eq:Bregman},
\begin{equation}
\label{Eq:LikelihoodIdentity}
    \ell_n(\theta)
    =
    n\hat D_n(\theta-\theta_0)
    -
    nK(\theta|\theta_0),
    \qquad
    d\nu_{\hat T_n}(\theta)
    =
    \frac{e^{\ell_n(\theta)}d\Pi(\theta)}
    {\int_{H_{s\vee p}}e^{\ell_n(\theta)}\,d\Pi(\theta)},
    \qquad \theta\in\Theta.
\end{equation}
Thus, the tilt of the posterior distribution $\nu_{\hat T_n}$ relative to the deterministic probability measure $\nu_{T_{\theta_0}}$ is given by the $\Theta^*$-valued random element $n\hat D_n$, a structural feature of exponential families that we use repeatedly.

The following auxiliary definitions are necessary. For $R>0$ to be fixed below, write shorthand $\Bcal:=B^\Theta_R(\theta_0)=\big\{\theta\in\Theta:\|\theta-\theta_0\|_\Theta\le R\big\}$. Using that $\|h\|_{H_{m_K}}\le\lambda_1^{m_K-p}\|h\|_{H_p} = \lambda_1^{m_K-p}\|h\|_\Theta$, holding since $m_K\le p$, we may take sufficiently small $R<R_0$ such that
\begin{equation}
\label{Eq:BallInclusion}
    \Bcal
    \subseteq
    B^\Theta_{R_0}(\theta_0)\cap B^{H_{m_K}}_{R_K}(\theta_0),
\end{equation}
so that both the coercivity property \eqref{Eq:LinkLower} and the Taylor expansion \eqref{Eq:KLTaylor} are available on $\Bcal$, as is the local Kullback-Leibler lower bound \eqref{Eq:DetLocalCoercivity} in view of the proof of Proposition \ref{Prop:Det}. For any $\tau\in\Theta^*$, let
\begin{equation}
\label{Eq:CondKernel}
    \tilde\nu_\tau(\cdot):=\nu_\tau(\cdot\,|\,\Bcal) = \frac{\nu_\tau(\cdot\,\cap\,\Bcal)}{\nu_\tau(\Bcal)}
\end{equation}
be the conditioning of $\nu_\tau$ to $\Bcal$, which is well defined since $\nu_\tau$ and $\Pi$ are equivalent and $\Pi(\Bcal)>0$.

\begin{proposition}
\label{Prop:Stoch}
Under the assumptions of Theorem \ref{Theo:Main}, for every $0\le s<\alpha\wedge\beta$, as $n\to\infty$,
\begin{equation}
\label{Eq:StochRate}
    E_{\theta_0}^{(n)}
    \left[
        \Wcal_2^{\Pcal_2(H_s)}
        \big(
            \nu_{\hat T_n},
            \nu_{T_{\theta_0}}
        \big)
    \right]
    =
    O\left(
        n^{-\frac{(\alpha\wedge\beta)-s}{2\alpha+d}}
    \right).
\end{equation}
\end{proposition}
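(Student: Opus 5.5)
We will bound the stochastic term by localising to a good event, conditioning the posterior kernels on $\Bcal$, and then estimating the Wasserstein distance between the conditioned kernels through a Benamou--Brenier length argument along the path $\tau_t:=T_{\theta_0}+t\hat D_n$, $t\in[0,1]$.

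\textbf{Localisation.} Fix the good event
$$
\Ecal_n:=\big\{\|\hat D_n\|_{H_{-m_K}}\le n^{-1/2}\log n\big\}.
$$
By Assumption \ref{Ass:Concentration} and a Bernstein/Yurinskii-type inequality for i.i.d.\ sub-exponential Hilbert-space valued sums, $\mu^{(n)}_{\theta_0}(\Ecal_n^c)$ decays faster than any power of $n$. On $\Ecal_n^c$ we use the triangle inequality through $\delta_{\theta_0}$ and Cauchy--Schwarz. The needed moment bound is $E\big[\int\|\theta-\theta_0\|^2_{H_s}\,d\nu_{\hat T_n}\big]\le$ a polynomial in $n$. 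We obtain it by comparing $\nu_{\hat T_n}$ with $\Pi$ via the denominator lower bound \eqref{Eq:QuadraticGaussianDenom}, together with an exponential-moment bound for $n\hat D_n(\theta-\theta_0)\le n\|\hat D_n\|_{H_{-m_K}}\|\theta-\theta_0\|_{H_{m_K}}$ and Fernique's theorem. This makes the $\Ecal_n^c$ contribution $o(n^{-C})$.

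On $\Ecal_n$ we write
$$
\Wcal_2(\nu_{\hat T_n},\nu_{T_{\theta_0}})\le \Wcal_2(\nu_{\hat T_n},\tilde\nu_{\hat T_n})+\Wcal_2(\tilde\nu_{\hat T_n},\tilde\nu_{T_{\theta_0}})+\Wcal_2(\tilde\nu_{T_{\theta_0}},\nu_{T_{\theta_0}}).
$$
The two outer terms are controlled by $\nu_\tau(\Bcal^c)^{1/2}$ times second moments. For these we repeat the proof of Proposition \ref{Prop:Det}, with the tilt $e^{n\tau_t(\theta-\theta_0)}$ absorbed on $\Ecal_n$. The absorption uses the bound $n\|\hat D_n\|_{H_{-m_K}}\|\theta-\theta_0\|_{H_{m_K}}\lesssim n^{1/2}\log n\,\|\theta-\theta_0\|_{H_{m_K}}$, which is negligible against the exponents $n^{\delta}$ and $n^{\delta'}$ obtained there, since $\delta,\delta'>1-\frac{2(\alpha\wedge\beta)}{2\alpha+d}$. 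The conclusion is that $\nu_{\tau_t}(\Bcal^c)$ is superpolynomially small, uniformly in $t$.

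\textbf{Dynamic stability on $\Bcal$.} Set $\tilde\nu_t:=\tilde\nu_{\tau_t}$. Then $\frac{d}{dt}\tilde\nu_t=n\big(\hat D_n(\theta)-\tilde\nu_t[\hat D_n]\big)\tilde\nu_t$. By Benamou--Brenier,
$$
\Wcal_2(\tilde\nu_0,\tilde\nu_1)\le\int_0^1\|v_t\|_{L^2(\tilde\nu_t;H_s)}\,dt
$$
for any velocity field $v_t$ solving $\nabla\cdot(\tilde\nu_t v_t)=-\frac{d}{dt}\tilde\nu_t$. We take $v_t$ to be a gradient field in $H_s$-geometry, $v_t=L^{-2s}\nabla u_t$, with $u_t$ solving the weighted Poisson equation for the centred function $n(\hat D_n(\cdot)-\tilde\nu_t[\hat D_n])$. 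Duality then gives
$$
\|v_t\|_{L^2(\tilde\nu_t;H_s)}\le \sup_{g}\frac{n\,\mathrm{Cov}_{\tilde\nu_t}\big(\hat D_n(\theta),g\big)}{\|\nabla g\|_{L^2(\tilde\nu_t;H_{-s})}},
$$
in the mixed geometry. The key ingredient is a Brascamp--Lieb/Poincaré inequality for $\tilde\nu_t$ on the convex set $\Bcal$. It is proved by Galerkin truncation: in finite dimensions, the potential $nK(\theta|\theta_0)-n\tau_t(\theta)+\frac12\|\theta\|^2_{H_{\alpha+d/2}}$ has Hessian $\succeq nc_0 I+L^{2\alpha+d}$ on $\Bcal$ by \eqref{Eq:LinkLower}. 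Brascamp--Lieb \citep{bakry2008simple} then gives
$$
\mathrm{Cov}_{\tilde\nu_t}(F,G)\le \big(E\langle\nabla F,\Sigma\nabla F\rangle\big)^{1/2}\big(E\langle\nabla G,\Sigma\nabla G\rangle\big)^{1/2},\qquad \Sigma=(nc_0+L^{2\alpha+d})^{-1}.
$$
Convexity of $\Bcal$ is needed for the Bakry--Émery argument, and the estimate passes to the limit as in Lemma \ref{Lem:QuadraticDet}. Applying this with $F=\hat D_n(\theta)$, so that $\nabla F=\hat D_n$, we get
$$
\|v_t\|\lesssim n\,\big\|\Sigma^{1/2}\hat D_n\big\|_{H_0}\cdot\sup_k\frac{\lambda_k^{s}}{(nc_0+\lambda_k^{2\alpha+d})^{1/2}}.
$$
The first factor is evaluated in the weak geometry $H_{-m_K}$, and the second carries the strong geometry $H_s$; this is the decoupling.

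\textbf{Rate computation.} Taking expectations, \eqref{Eq:FisherFluct} and \eqref{Eq:LinkUpper} give $E\big[n^2\|\Sigma^{1/2}\hat D_n\|^2\big]\lesssim n\sum_k (nc_0+\lambda_k^{2\alpha+d})^{-1}\lesssim n^{d/(2\alpha+d)}$. We expect the main obstacle to be that this crude sup/trace split loses a factor, since multiplying gives the variance term without the sharp weighting. To avoid this, we will not split into a product of sup and trace. Instead, we apply the Brascamp--Lieb inequality coordinatewise to the direction-weighted covariance $\mathrm{Cov}(\hat D_n(\theta),\langle\theta,L^{2s}w\rangle)$, which yields the velocity bound
$$
\|v_t\|^2\lesssim n^2\sum_k\frac{\lambda_k^{2s}\,\hat D_{n,k}^2}{(nc_0+\lambda_k^{2\alpha+d})^2}.
$$
Its expectation is $n\sum_k\lambda_k^{2s}(n+\lambda_k^{2\alpha+d})^{-2}\lesssim n^{-2(\alpha-s)/(2\alpha+d)}$, exactly as in the white noise computation. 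Jensen's inequality then gives order $n^{-(\alpha-s)/(2\alpha+d)}\le n^{-(\alpha\wedge\beta-s)/(2\alpha+d)}$. Making this sharp coordinatewise bound rigorous is the delicate step, since $I(\tau)$ is not diagonal. We plan to handle it by using the two-sided comparison of the Hessian with diagonal operators, together with the restriction to $\Ecal_n$ and $\Bcal$.
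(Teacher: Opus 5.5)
Your architecture is the paper's. You localise, condition on $\Bcal$, prove a preconditioned Brascamp--Lieb/Poincar\'e inequality on the convex Galerkin sets with $\Sigma=(nc_0+L^{2\alpha+d})^{-1}$, and bound the Benamou--Brenier length of $t\mapsto\tilde\nu_{\tau_t}$ using a gradient velocity field. Your intermediate estimate $\|v_t\|_{L^2(\tilde\nu_t;H_s)}\le n\sqrt{\kappa_{n,s}}\,\sqrt{\hat D_n(\Ccal_n\hat D_n)}$ is exactly \eqref{Eq:CondTransport}.

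You then abandon it, believing the sup/trace split loses a factor. It does not. Your own numbers give $n^2\kappa_{n,s}\,E^{(n)}_{\theta_0}[\hat D_n(\Ccal_n\hat D_n)]\lesssim n^{-1+\frac{2s}{2\alpha+d}}\cdot n^{\frac{d}{2\alpha+d}}=n^{-\frac{2(\alpha-s)}{2\alpha+d}}$. This is the same order as your coordinatewise sum, and it is precisely how the paper concludes. The coordinatewise bound you commit to instead is left unproven. It also does not follow from the curvature comparison you invoke. A Loewner bound $\mathrm{Hess}\,V\succeq\Sigma^{-1}$ controls the quadratic form $\langle\chi,(\mathrm{Hess}\,V)^{-1}\chi\rangle_{H_0}$. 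It does not control the $H_s$-norm of the response $(\mathrm{Hess}\,V)^{-1}\chi$ by $\|\Sigma\chi\|_{H_s}$ when the two operators do not commute. As written, your argument rests on a step that is both unjustified and unnecessary; keep the product bound.

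The genuine gaps are in the localisation.

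\emph{The bad event.} Your event only guarantees $\mu^{(n)}_{\theta_0}(\Ecal_n^c)\lesssim e^{-c\log^2 n}$. So on $\Ecal_n^c$ you need $E^{(n)}_{\theta_0}\int\|\theta-\theta_0\|^2_{H_s}\,d\nu_{\hat T_n}(\theta)$ to grow more slowly than $e^{c\log^2 n}$. The tools you name cannot deliver this, for two reasons.
\begin{itemize}
\item \eqref{Eq:QuadraticGaussianDenom} concerns the deterministic normaliser $Z^{(n)}_{I,\theta_0}$, not the random one. It is itself only $e^{-Cn^{1-2(\alpha\wedge\beta)/(2\alpha+d)}}$.
\item Integrating $e^{n\|\hat D_n\|_{H_{-m_K}}\|\theta-\theta_0\|_{H_{m_K}}}$ against the Gaussian prior yields $e^{Cn^2\|\hat D_n\|^2_{H_{-m_K}}}$. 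Its expectation is at least $e^{cn}$ by Jensen, and it is infinite in the Poisson model.
\end{itemize}
The paper instead exploits global log-concavity. Convexity of $M$ makes the Hessian of the negative log-posterior dominate $Q_\Pi^{-1}$ everywhere. This gives a global Poincar\'e inequality and the Lipschitz bound $\Wcal_2^{\Pcal_2(H_s)}(\nu_\tau,\nu_{\tau'})\le\bar\kappa_s^{1/2}\,n\,\|\tau-\tau'\|_{H_{-\alpha-d/2}}$, whose square has expectation $\lesssim n$.

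\emph{The conditioning error for $\nu_{\hat T_n}$.} This is not obtained by comparing the tilt with $n^{\delta}$ and $n^{\delta'}$. On $\Bcal^c$ the $H_{m_K}$-norm of $\theta-\theta_0$ need not be small. Moreover, the exponent $\delta$ of Proposition \ref{Prop:Det} is below $1/3$, so even a factor $n^{1/2}\log n$ exceeds $n^{\delta}$.
\begin{itemize}
\item On $\Bcal^c\cap B^{H_\gamma}_{n^b}(\theta_0)$ the tilt is up to order $n^{1/2+b}\log n$. It must be beaten by the Kullback--Leibler lower bound $n^{1-\omega}$, which requires $b+\omega<1/2$. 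This is why the paper chooses $b+\omega<a<1/2$ in \eqref{Eq:ExponentWindows}--\eqref{Eq:ThetaB}.
\item On $\{\|\theta-\theta_0\|_{H_\gamma}>n^b\}$ with $b<1/2$, the tilt is linear in $\|\theta-\theta_0\|$ with coefficient $n^{1/2}\log n$. The Gaussian prior tail alone cannot absorb it. The paper handles this region through the events of Lemma \ref{Lem:GoodEvent}, using Fubini with $E^{(n)}_{\theta_0}e^{\ell_n(\theta)}=1$ and Markov's inequality rather than pointwise absorption.
\end{itemize}
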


\begin{proof}
Let $\Ecal_n$ be the event defined in Lemma \ref{Lem:GoodEvent} and split
\begin{equation}
\label{Eq:AnotherSplit}
    E^{(n)}_{\theta_0}\big[\Wcal_2^{\Pcal_2(H_s)}(\nu_{\hat T_n},\nu_{T_{\theta_0}})\big]
    =
    E^{(n)}_{\theta_0}\big[\Wcal_2^{\Pcal_2(H_s)}(\nu_{\hat T_n},\nu_{T_{\theta_0}})1_{\Ecal^c_n}\big]
    +
    E^{(n)}_{\theta_0}\big[\Wcal_2^{\Pcal_2(H_s)}(\nu_{\hat T_n},\nu_{T_{\theta_0}})1_{\Ecal_n}\big].
\end{equation}

We upper bound the first term. Since $M$ is convex, $D^2M(\theta)\succeq0$ on $\Theta$, so that the curvature of every $\nu_\tau$ is bounded below globally by the prior precision $Q_\Pi^{-1}$. Applying the Bakry--\'Emery criterion on the finite-dimensional Galerkin spaces and passing to the limit as in the proof of Lemma \ref{Lem:CondPoincare} below (with $\Theta_{0,K}$ in place of $\Bcal\cap\Theta_{0,K}$, which in fact simplifies the argument) gives, for all $\tau\in\Theta^*$,
$$
    Var_{\nu_\tau}[F(\theta)]
    \le
    \bar\kappa_s
    \int_\Theta\|\nabla^{H_s}F\|^2_{H_s}\,d\nu_\tau,
    \qquad
    Var_{\nu_\tau}[F(\theta)]
    \le
    \int_\Theta\|\nabla^{H_{\alpha+d/2}}F\|^2_{H_{\alpha+d/2}}\,d\nu_\tau,
$$
holding for all bounded smooth cylindrical functions of the form $F(\theta)=f(\theta_1,\dots,\theta_K)$ for some $K\in\N$ and some bounded and smooth $f:\R^K\to\R$ with bounded derivatives of all orders, and where, for $q\in\R$, $\nabla^{H_q}F=\sum_{k\le K}\lambda_k^{-2q}\partial_kf\,e_k$ denotes the gradient of $F$ in the $H_q$-geometry. Above, $\bar\kappa_s:=\sup_{k\ge1}\lambda_k^{2s-2\alpha-d}=\lambda_1^{2s-2\alpha-d}<\infty$ because $s<\alpha$.

Applying the transport techniques in the proof of Lemma \ref{Lem:CondTransport} below with these two global inequalities in place of \eqref{Eq:CondPoincare} yields the crude estimate
$$
    \Wcal_2^{\Pcal_2(H_s)}\big(\nu_\tau,\nu_{\tau'}\big)
    \le
    \bar\kappa_s^{1/2} n\|\tau-\tau'\|_{H_{-\alpha-d/2}},
    \qquad\text{for all}\ \tau,\tau'\in\Theta^* .
$$
Then, $\Wcal_2^{\Pcal_2(H_s)}(\nu_{\hat T_n},\nu_{T_{\theta_0}})^2\lesssim n^2 \|\hat D_n\|^2_{H_{-\alpha-d/2}}$ and since, by \eqref{Eq:FisherFluct}, \eqref{Eq:LinkUpper} and Fubini's theorem,
$$
    E^{(n)}_{\theta_0}\|\hat D_n\|^2_{H_{-\alpha-d/2}}
    =
    \frac1n\sum_{k=1}^\infty\lambda_k^{-2\alpha-d}I(\theta_0)[e_k]
    \le
    \frac{C_0}{n}\sum_{k=1}^\infty\lambda_k^{-2\alpha-d}
    \lesssim
    \frac1n,
$$
the series converging in view of the eigenvalue asymptotics \eqref{Eq:PolyGrowth} for any $\alpha>0$, we obtain $E^{(n)}_{\theta_0}[\Wcal_2^{\Pcal_2(H_s)}(\nu_{\hat T_n},\nu_{T_{\theta_0}})^2]\lesssim n$. By the Cauchy--Schwarz inequality and the exponential probability decay \eqref{Eq:GoodProb}, this implies that
$$
    E^{(n)}_{\theta_0}\big[\Wcal_2^{\Pcal_2(H_s)}(\nu_{\hat T_n},\nu_{T_{\theta_0}})1_{\Ecal^c_n}\big]
    \lesssim
    n^{1/2}\exp\big\{-\tfrac c2n^{\varsigma}\big\},
$$
for some $c,\varsigma>0$. The right hand side vanishes faster than any negative power of $n$, and hence is negligible relative to the target rate in \eqref{Eq:StochRate}.

Turning to the second term in \eqref{Eq:AnotherSplit}, we introduce the further split \begin{equation}
\label{Eq:StochSplitProof}
    \Wcal_2^{\Pcal_2(H_s)}
    \big(\nu_{\hat T_n},\nu_{T_{\theta_0}}\big)
    \le
    \Wcal_2^{\Pcal_2(H_s)}\big(\nu_{\hat T_n},\tilde\nu_{\hat T_n}\big)
    +
    \Wcal_2^{\Pcal_2(H_s)}\big(\tilde\nu_{T_{\theta_0}},\nu_{T_{\theta_0}}\big)
    +
    \Wcal_2^{\Pcal_2(H_s)}\big(\tilde\nu_{\hat T_n},\tilde\nu_{T_{\theta_0}}\big),
\end{equation}
where $\tilde\nu_{\hat T_n},\tilde\nu_{T_{\theta_0}}$ are defined as in \eqref{Eq:CondKernel} with $\tau = \hat T_n$ and $\tau = T_{\theta_0}$, respectively. Lemma \ref{Lem:CondError} then shows that the expectations of the first two terms in \eqref{Eq:StochSplitProof}, restricted to $\Ecal_n$, are of lower order than the rate in \eqref{Eq:StochRate}. Lastly, by Lemma \ref{Lem:CondTransport} and the Cauchy--Schwarz inequality,
$$
    E^{(n)}_{\theta_0}
    \Big[\Wcal_2^{\Pcal_2(H_s)}\big(\tilde\nu_{\hat T_n},\tilde\nu_{T_{\theta_0}}\big)1_{\Ecal_n}\Big]
    \le
    n\sqrt{\kappa_{n,s}}\,
    \sqrt{E^{(n)}_{\theta_0}\big[\hat D_n(\Ccal_n\hat D_n)\big]},
$$
where $\Ccal_n:\Theta^*\to \Theta$ is the linear diagonal operator defined by \eqref{Eq:Cn} below, and where the constant $\kappa_{n,s}$ is given by \eqref{Eq:CondPoincareRate} and satisfies $\kappa_{n,s}\lesssim n^{-1+2s/(2\alpha+d)}$. By \eqref{Eq:FisherFluct}, the definition of $\Ccal_n$, \eqref{Eq:LinkUpper} and Fubini's theorem,
$$
    E^{(n)}_{\theta_0}\big[\hat D_n(\Ccal_n\hat D_n)\big]
    =
    \frac1n\sum_{k=1}^\infty\frac{I(\theta_0)[e_k]}{nc_0+\lambda_k^{2\alpha+d}}
    \le
    \frac{C_0}{n}\sum_{k=1}^\infty\frac{1}{nc_0+\lambda_k^{2\alpha+d}}
    \lesssim
    n^{-2+\frac{d}{2\alpha+d}},
$$
the last inequality following from \eqref{Eq:PolyGrowth} upon splitting the series at the spectral cut-off $\lambda_k^{2\alpha+d}\asymp n$, with both resulting pieces being of order $n^{-1+d/(2\alpha+d)}$. It follows that
$$
    E^{(n)}_{\theta_0}
    \Big[\Wcal_2^{\Pcal_2(H_s)}\big(\tilde\nu_{\hat T_n},\tilde\nu_{T_{\theta_0}}\big)\Big]
    \lesssim
    n\ n^{-\frac12+\frac{s}{2\alpha+d}}\ n^{-1+\frac{d/2}{2\alpha+d}}
    =
    n^{-\frac{\alpha-s}{2\alpha+d}} .
$$
Combining the obtained upper bounds proves \eqref{Eq:StochRate}.
\end{proof}

The rest of this section is devoted to proving the auxiliary results for the proof of Proposition \ref{Prop:Stoch}. To that end, we introduce the following notation and select suitable exponents that appear repeatedly in the argument. Recall that $\alpha\wedge\beta>p$ and, from Assumption \ref{Ass:Link}--\ref{Ass:Concentration}, that $m_K\in(d/2,p]$ and $r_K\ge0$. Define
\begin{equation}
\label{Eq:Rho}
    \rho:=\frac{\alpha\wedge\beta-m_K}{2(\alpha-m_K)+d}\in(0,1/2),
    \qquad
    \delta:=1-2\rho=\frac{2(\alpha-\alpha\wedge\beta)+d}{2(\alpha-m_K)+d}\in(0,1),
    \qquad
    \epsilon_n:=n^{-\rho},
\end{equation}
so that $n\epsilon_n^2=n^{\delta}$, and for $\gamma\in(p,\alpha\wedge\beta]$ set
\begin{equation}
\label{Eq:CStar}
    C_\ast(\gamma):=\frac{\gamma+p+2r_K}{\gamma-p}.
\end{equation}
The compatibility conditions \eqref{Eq:Floor} and \eqref{Eq:FloorOversmooth} of Theorem \ref{Theo:Main} imply
$\delta(1+C_\ast(\alpha\wedge\beta))<1$. Since $C_\ast$ is decreasing and continuous, we can therefore choose $\gamma\in (s\vee p,\alpha \wedge\beta)$, $a<1/2$ and $b>\delta/2$ such that
\begin{equation}
\label{Eq:ExponentWindows}
    b\,C_\ast(\gamma)<a<\frac{1-\delta}{2}.
\end{equation}
Furthermore, for these choices, the quantity $\omega:=2b(p+r_K)/(\gamma-p)$ satisfies
\begin{equation}
\label{Eq:ThetaB}
    a>b+\omega,
    \qquad
    1-\omega>\delta .
\end{equation}

The preceding choices reflect the three Hilbert-scale geometries used in the proof. The conditioning set $\Bcal$ appearing in \eqref{Eq:CondKernel} is a fixed-radius ball in the parameter space $\Theta=H_p$, on which the Fisher information is coercive; see Assumption \ref{Ass:Link}. The index $m_K\in(d/2,p]$ corresponds to a weaker geometry, relative to which the sufficient-statistic fluctuations are controlled and the prior places sufficient mass on the shrinking ball $B^{H_{m_K}}_{\epsilon_n}(\theta_0)$; see Lemma \ref{Lem:SmallBall}. Finally, $\gamma\in(s\vee p,\alpha\wedge\beta)$ indexes a stronger geometry in which the complement of the growing ball $B^{H_\gamma}_{n^b}(\theta_0)$ has negligible prior probability; see the proof of Lemma \ref{Lem:GoodEvent}.

%
%
%

\subsubsection{Construction of the localising event}

We first record a small-ball estimate for the prior $\Pi$ in the Hilbert scale. Its verification is standard in Bayesian nonparametrics \citep[Chapter 11]{ghosal2017fundamentals}. It is included for completeness and the convenience of the reader.

\begin{lemma}
\label{Lem:SmallBall}
Let $\theta_0\in H_\beta$ for some $\beta>0$, and let $\Pi$ be the $\alpha$-regular Gaussian series prior \eqref{Eq:Prior}, for some $\alpha>0$. Then, for any $0\le m < \alpha\wedge\beta$ and all sufficiently small $\epsilon>0$,
\begin{equation}
\label{Eq:SmallBall}
    -\log\Pi\big(B_\epsilon^{H_{m}}(\theta_0)\big)
    \ \lesssim\
    \epsilon^{-\frac{d}{\alpha-m}}
    +
    \epsilon^{-\frac{(2\alpha+d-2\beta)_+}{\beta-m}}.
\end{equation}
In particular, for $m_K\in(d/2,p]$ from Assumption \ref{Ass:KLTaylor} for $\rho,\delta$ and $\epsilon_n$ as in \eqref{Eq:Rho},
\begin{equation}
\label{Eq:SmallBallRate}
    -\log\Pi(B^{H_{m_K}}_{\epsilon_n}(\theta_0))\ \lesssim\ n^{\delta}=n\epsilon_n^2 .
\end{equation}
\end{lemma}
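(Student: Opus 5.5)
The plan is the standard decentred small-ball argument for Gaussian measures. I first use the RKHS structure recalled after \eqref{Eq:Prior}: $\Pi$ is a centred Gaussian measure on $H_m$ for every $m<\alpha$, with Cameron--Martin space $H_\Pi=H_{\alpha+d/2}$ and $\|h\|_{H_\Pi}=\|h\|_{H_{\alpha+d/2}}$. The classical inequality (e.g.\ \citealp[Chapter 11]{ghosal2017fundamentals}; Kuelbs--Li--Linde, via the Cameron--Martin formula and Anderson's inequality) gives, for every $h\in H_\Pi$,
\begin{equation*}
    -\log\Pi\big(B^{H_m}_{2\epsilon}(\theta_0)\big)
    \le \tfrac12\|h\|_{H_{\alpha+d/2}}^2-\log\Pi\big(B^{H_m}_{\epsilon}(0)\big),
    \qquad \|h-\theta_0\|_{H_m}\le\epsilon .
\end{equation*}
So I bound the centred small-ball exponent and the approximation (concentration) term separately. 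Replacing $2\epsilon$ by $\epsilon$ only changes constants.

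For the centred term, I write $\|\theta\|_{H_m}^2=\sum_k\lambda_k^{2(m-\alpha)-d}t_k^2$ with $t_k\iid\Ncal(0,1)$. Since $\lambda_k\asymp k^{1/d}$ by \eqref{Eq:PolyGrowth}, the coefficient weights are $\sigma_k^2\asymp k^{-1-2(\alpha-m)/d}$. I split at a cutoff $J$ chosen so that the tail satisfies $\sum_{k>J}\sigma_k^2\asymp J^{-2(\alpha-m)/d}\le\epsilon^2/4$, i.e.\ $J\asymp\epsilon^{-d/(\alpha-m)}$. Then:
\begin{itemize}
    \item the tail event $\{\sum_{k>J}\sigma_k^2t_k^2\le\epsilon^2/2\}$ has probability at least $1/2$ by Markov's inequality;
    \item the head event $\{\sum_{k\le J}\sigma_k^2t_k^2\le\epsilon^2/2\}$ contains $\{|t_k|\le \epsilon/(\sigma_k\sqrt{2J})\ \forall k\le J\}$, by independence.
\end{itemize}
Each one-dimensional probability for the head is $\gtrsim\min(1,\epsilon/(\sigma_k\sqrt J))$. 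Taking logarithms, the head therefore contributes $\lesssim\sum_{k\le J}\log(\sigma_k\sqrt J/\epsilon)_+\lesssim J\log(1/\epsilon)$. To remove the logarithm and obtain exactly $\epsilon^{-d/(\alpha-m)}$, I would do a dyadic-block version of the same bound, which is the standard polynomial-eigenvalue small-ball computation (Li--Shao). I expect this log-removal to be the only mildly delicate point. If necessary I would simply cite the known $-\log\Pi(B_\epsilon(0))\asymp\epsilon^{-d/(\alpha-m)}$ for Gaussian series with polynomially decaying variances.

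For the approximation term, I take $h=P_J\theta_0$, the projection onto the first $J$ basis elements, and work with $\bar\beta:=\alpha\wedge\beta$. Because $\theta_0\in H_\beta$, the truncation error is $\|\theta_0-P_J\theta_0\|_{H_m}\le\lambda_{J}^{-(\bar\beta-m)}\|\theta_0\|_{H_{\bar\beta}}$, so $\lambda_J\asymp\epsilon^{-1/(\bar\beta-m)}$ suffices. The RKHS norm is then
\begin{equation*}
    \|P_J\theta_0\|_{H_{\alpha+d/2}}^2\le\lambda_J^{(2\alpha+d-2\bar\beta)_+}\|\theta_0\|_{H_{\bar\beta}}^2
    \lesssim\epsilon^{-(2\alpha+d-2\beta)_+/(\beta-m)}.
\end{equation*}
To check the last step I separate cases. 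If $\beta\ge\alpha+d/2$, the exponent is zero and the term is $O(1)$. If $\beta<\alpha+d/2$, two subcases arise. When $\beta\le\alpha$ we have $\bar\beta=\beta$, which gives exactly the stated form. When $\alpha<\beta<\alpha+d/2$ we have $\bar\beta=\alpha$, and I get $\epsilon^{-d/(\alpha-m)}$, which is absorbed into the first term. Combining the two bounds yields \eqref{Eq:SmallBall}.

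For \eqref{Eq:SmallBallRate}, I plug in $m=m_K<\alpha\wedge\beta$ and $\epsilon_n=n^{-\rho}$. The first term gives
\begin{equation*}
    n^{\rho d/(\alpha-m_K)}\le n^{\delta}
    \iff (\alpha\wedge\beta-m_K)d\le(\alpha-m_K)(2\alpha-2(\alpha\wedge\beta)+d),
\end{equation*}
which holds because $\alpha\wedge\beta\le\alpha$. For the second term in the undersmoothing case $\beta\le\alpha$, the exponent is $\rho(2\alpha+d-2\beta)/(\beta-m_K)=(2\alpha+d-2\beta)/(2(\alpha-m_K)+d)=\delta$ exactly. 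In the case $\beta>\alpha$, the second term is dominated by the first as shown above. This completes the plan.
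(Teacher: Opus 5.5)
Your proposal is correct and follows essentially the paper's proof: centred small-ball asymptotics for the Gaussian series, which the paper simply cites as you offer to do. (In fact your head bound already gives $O(J)$ without dyadic blocks, since $\sum_{k\le J}\log(J/k)\le J$.) Then come the Cameron--Martin decentring with $h=P_K\theta_0$ as in \citet[Lemma 5.3]{vaart2008rates} and the substitution of $\epsilon_n$. The one slip is that truncating with $\bar\beta=\alpha\wedge\beta$ makes your displayed RKHS bound equal to $\lambda_J^{d}\asymp\epsilon^{-d/(\alpha-m)}$ whenever $\beta>\alpha$, so your claim that this term is $O(1)$ for $\beta\ge\alpha+d/2$ does not follow from it. This is harmless because the term is absorbed into the first one, and the paper avoids it by choosing $\lambda_K\asymp\epsilon^{-1/(\beta-m)}$ and bounding $\|P_K\theta_0\|_{H_{\alpha+d/2}}^2\le\lambda_K^{(2\alpha+d-2\beta)_+}\|\theta_0\|_{H_\beta}^2$ directly.
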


\begin{proof}
In the orthonormal basis $\{\lambda_k^{-m}e_k\}_{k\ge1}$ of $H_{m}$, the coordinates of a draw from $\Pi$ are independent centred Gaussian random variables with variances $\lambda_k^{-2(\alpha-m)-d}\asymp k^{-1-2(\alpha-m)/d}$, $k\in\N$. Classical small-deviation asymptotics for weighted sums of squared Gaussians \citep[][Section 4.3]{bogachev1998gaussian} give, as $\epsilon\to0$,
$$
	-\log \Pi(B_\epsilon^{H_{m}}(0))
	=
	-\log\Pr\left(
	\sum_{k=1}^\infty
	\lambda_k^{-2(\alpha-m)-d}Z_k^2
	\le \epsilon^2
	\right)
	\asymp -\epsilon^{-d/(\alpha-m)},
$$ 
yielding the first term of \eqref{Eq:SmallBall}. Next, recall from Section \ref{Subsec:GenThm} that the RKHS of $\Pi$ is $H_{\alpha+d/2}$. Let $P_K$ denote the projection onto $\Theta_K=\operatorname{span}\{e_1,\dots,e_K\}$ and take $K(\epsilon)\in\N$ to be the smallest integer such that $\lambda_{K(\epsilon)}\geq \big((1+\|\theta_0\|_{H_\beta})/\epsilon\big)^{1/(\beta-m)}$. Then, $\lambda_{K(\epsilon)}\asymp \big((1+\|\theta_0\|_{H_\beta})/\epsilon\big)^{1/(\beta-m)}$ and
$$
    \|\theta_0-P_{K(\epsilon)}\theta_0\|_{H_{m}}
    \le\lambda_{K(\epsilon)}^{-(\beta-m)}\|\theta_0\|_{H_\beta}
    \le\epsilon,
    \qquad
    \|P_{K(\epsilon)}\theta_0\|^2_{H_{\alpha+d/2}}
    \le\lambda_{K(\epsilon)}^{(2\alpha+d-2\beta)_+}\|\theta_0\|^2_{H_\beta}.
$$
The first claim follows from the second inequality in the last display and an application of the Cameron--Martin theorem as in \citet[][Lemma 5.3]{vaart2008rates}. The second claim follows from the first upon choosing $m = m_K$ and $\epsilon = \epsilon_n$, since $\epsilon_n\to0$.
\end{proof}

The next lemma converts the exponential moment condition of Assumption \ref{Ass:Concentration} into two deviation inequalities for $\hat D_n$: one in the $H_{-m_K}$-norm and the other when $\hat D_n$ is evaluated at a fixed element of $H_{m_K}$.

\begin{lemma}
\label{Lem:Bernstein}
Let Assumption \ref{Ass:Concentration} hold for some $A_T>0$. Then, there exists a constant $v_T>0$ such that, for all $u>0$ and $n\in\N$,
\begin{equation}
\label{Eq:BernsteinNorm}
    \mu^{(n)}_{\theta_0}
    \big(\|\hat D_n\|_{H_{-m_K}}\ge u\big)
    \le
    2\exp\left\{-\frac{nu^2}{2(v_T+A_Tu)}\right\},
\end{equation}
and, for every fixed $h\in H_{m_K}$, $h\neq 0$, and all $u>0$,
\begin{equation}
\label{Eq:BernsteinLinear}
    \mu^{(n)}_{\theta_0}
    \big(|\hat D_n(h)|\ge u\big)
    \le
    2\exp\left\{-\frac{nu^2}
    {2\big(v_T\|h\|^2_{H_{m_K}}+A_T\|h\|_{H_{m_K}}u\big)}\right\}.
\end{equation}
\end{lemma}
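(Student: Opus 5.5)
The plan is to deduce both inequalities from a Bernstein inequality for sums of i.i.d.\ centred random elements of the separable Hilbert space $H_{-m_K}$, in the moment form of \citet[][Theorem 3.3.4 / Corollary]{yurinskii}-type, or equivalently Pinelis' inequality for martingales in $2$-smooth Banach spaces. Write $Z_i:=\beta_{X_i}-T_{\theta_0}$. These are i.i.d.\ and centred in $H_{-m_K}$, and $\hat D_n=\frac1n\sum_{i=1}^n Z_i$. Centredness holds because $T_{\theta_0}$ is the Bochner mean: the mean exists in $\Theta^*$ by Assumption \ref{Ass:Link}, and it coincides with the $H_{-m_K}$-Bochner mean since $\|Z_1\|_{H_{-m_K}}$ is integrable.

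\textbf{Step 1 (moment growth).} From Assumption \ref{Ass:Concentration}, set $\kappa:=E^{(1)}_{\theta_0}\exp\{\|Z_1\|_{H_{-m_K}}/A_T\}<\infty$. The elementary bound $x^k/k!\le e^x$ with $x=\|Z_1\|/A_T$ gives, for every $k\ge2$,
$$E\|Z_1\|^k_{H_{-m_K}}\le k!\,\kappa A_T^{k}=\tfrac{k!}{2}\,v_T\,A_T^{k-2},\qquad v_T:=2\kappa A_T^2 .$$
This is exactly the Bernstein moment condition.

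\textbf{Step 2 (norm bound \eqref{Eq:BernsteinNorm}).} Apply Pinelis' Bernstein inequality in Hilbert space. Since $H_{-m_K}$ has smoothness constant $1$, it gives, for i.i.d.\ centred $Z_i$ satisfying the Step 1 condition with parameters $(nv_T,A_T)$ for the sum,
$$P\Big(\Big\|\sum_{i=1}^n Z_i\Big\|\ge t\Big)\le 2\exp\Big\{-\frac{t^2}{2(nv_T+A_Tt)}\Big\}.$$
Substituting $t=nu$ yields \eqref{Eq:BernsteinNorm}. The only care needed is the constant convention in that inequality; possibly $v_T$ and $A_T$ must be enlarged by absolute factors. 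This is harmless, since the statement only asserts the existence of some $v_T$, and a larger $A_T$ is allowed because Assumption \ref{Ass:Concentration} remains true when $A_T$ is enlarged.

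\textbf{Step 3 (linear bound \eqref{Eq:BernsteinLinear}).} For fixed $h\in H_{m_K}$, the variables $\xi_i:=Z_i(h)$ are real, i.i.d.\ and centred, with $|\xi_i|\le\|Z_i\|_{H_{-m_K}}\|h\|_{H_{m_K}}$ by duality. Hence $E|\xi_1|^k\le\frac{k!}{2}(v_T\|h\|^2)(A_T\|h\|)^{k-2}$, and the scalar Bernstein inequality with a two-sided union gives \eqref{Eq:BernsteinLinear}, again with $t=nu$. The same constants as in Step 2 can be used once $v_T$ is taken as the larger of the two.

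The main obstacle is Step 2, which needs a genuinely infinite-dimensional Bernstein inequality without a dimension-dependent or expectation term. If one instead used Yurinskii's form, which bounds deviations above $E\|\sum Z_i\|\le\sqrt{n v_T}$, an extra shift would appear. To avoid this I would invoke Pinelis (1994, Theorem 3.4) directly, whose constant-$1$ case in Hilbert spaces gives the clean sub-exponential bound above.
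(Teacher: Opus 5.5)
Your proposal is correct and follows the paper's proof step for step. Both use the same moment bound $E\|Z_1\|^k_{H_{-m_K}}\le \frac{k!}{2}v_TA_T^{k-2}$ with $v_T=2A_T^2E^{(1)}_{\theta_0}e^{\|Z_1\|_{H_{-m_K}}/A_T}$, Pinelis' Bernstein inequality in the Hilbert space $H_{-m_K}$ for \eqref{Eq:BernsteinNorm}, and the scalar Bernstein inequality via $|Z_i(h)|\le\|Z_i\|_{H_{-m_K}}\|h\|_{H_{m_K}}$ for \eqref{Eq:BernsteinLinear}.

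One small correction to your hedge. Enlarging $A_T$ would not actually be harmless for the lemma as stated, because the conclusion uses the same $A_T$ as Assumption \ref{Ass:Concentration}. However, no enlargement is needed: in a Hilbert space, the Bernstein moment condition already gives exactly $2\exp\{-t^2/(2(nv_T+A_Tt))\}$ for $\|\sum_{i\le n}Z_i\|$.
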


\begin{proof}
Write $\xi_i:=\beta_{X_i}-T_{\theta_0}$. These are independent and identically distributed centred random elements of the separable Hilbert space $H_{-m_K}$. By \eqref{Eq:Concentration} and the elementary bound $w^k\le k!\,A_T^ke^{w/A_T}$, valid for any $w\ge0$,
$$
    E^{(1)}_{\theta_0}\left[\|\xi_1\|^k_{H_{-m_K}}\right]
    \le
    k!\,A_T^k\,E^{(1)}_{\theta_0}\left[e^{\|\xi_1\|_{H_{-m_K}}/A_T}\right]
    =
    \frac{k!}{2}\,v_T\,A_T^{k-2},
    \qquad k\ge2,
$$
with $v_T:=2A_T^2E^{(1)}_{\theta_0}[e^{\|\xi_1\|_{H_{-m_K}}/A_T}]<\infty$. This is the Bernstein moment condition, and, recalling $\hat D_n=n^{-1}\sum_{i=1}^n\xi_i$, \eqref{Eq:BernsteinNorm} is the corresponding Bernstein inequality for sums of independent random elements of a separable Hilbert space, e.g.~\citep[Theorem~3.3]{pinelis1994optimum}. For \eqref{Eq:BernsteinLinear}, note that $|\xi_i(h)|\le\|\xi_i\|_{H_{-m_K}}\|h\|_{H_{m_K}}$, so the real random variables $\xi_i(h)$ are centred and satisfy the same moment condition with $v_T\|h\|^2_{H_{m_K}}$ and $A_T\|h\|_{H_{m_K}}$ in place of $v_T$ and $A_T$; the classical scalar Bernstein inequality then applies.
\end{proof}

Finally, we note that Assumptions \ref{Ass:Link} and \ref{Ass:KLTaylor} yield the following Kullback--Leibler upper bound in $H_{m_K}$.

\begin{lemma}
\label{Lem:KLUpper}
Let Assumptions \ref{Ass:Link} and \ref{Ass:KLTaylor} hold. Then, with
$C_U:=\tfrac12C_0\lambda_1^{-2m_K}+C_KR_K^{q_K}$,
\begin{equation}
\label{Eq:KLUpper}
    K(\theta|\theta_0)\ \le\ C_U\|\theta-\theta_0\|^2_{H_{m_K}},
    \qquad \text{for all}\ \theta\in \Theta\cap  B^{H_{m_K}}_{R_K}(\theta_0).
\end{equation}
\end{lemma}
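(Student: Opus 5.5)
The bound follows directly from the triangle inequality applied to the expansion in Assumption~\ref{Ass:KLTaylor}, together with the upper Fisher bound \eqref{Eq:LinkUpper}. Fix $\theta\in\Theta\cap B^{H_{m_K}}_{R_K}(\theta_0)$ and set $h:=\theta-\theta_0\in\Theta$. Then \eqref{Eq:KLTaylor} gives
$$
	K(\theta|\theta_0)\le \tfrac12 I(\theta_0)[h]+C_K\|h\|_{H_{m_K}}^{2+q_K}.
$$
The two summands on the right are bounded separately.

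For the quadratic term, use \eqref{Eq:LinkUpper}, $I(\theta_0)[h]\le C_0\|h\|^2_{H_0}$, and then compare $H_0$ with $H_{m_K}$ through the series representation. Since $\{\lambda_k\}$ is nondecreasing and $m_K>0$, we have $\lambda_k^{-2m_K}\le \lambda_1^{-2m_K}$, and therefore
$$
	\|h\|_{H_0}^2=\sum_k h_k^2=\sum_k\lambda_k^{-2m_K}\lambda_k^{2m_K}h_k^2\le \lambda_1^{-2m_K}\|h\|^2_{H_{m_K}}.
$$
This gives $\tfrac12 I(\theta_0)[h]\le \tfrac12 C_0\lambda_1^{-2m_K}\|h\|^2_{H_{m_K}}$.

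For the remainder term, write $\|h\|^{2+q_K}_{H_{m_K}}=\|h\|^{q_K}_{H_{m_K}}\|h\|^2_{H_{m_K}}$. Because $q_K>0$ and $\|h\|_{H_{m_K}}\le R_K$ on the ball, we get $\|h\|^{q_K}_{H_{m_K}}\le R_K^{q_K}$. Adding the two estimates yields \eqref{Eq:KLUpper} with $C_U=\tfrac12C_0\lambda_1^{-2m_K}+C_KR_K^{q_K}$.

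There is no real obstacle here. The only points needing care are that $h\in\Theta$, so that \eqref{Eq:LinkUpper} applies, and that the monotonicity of $\lambda_k$ is used correctly in the $H_0$–$H_{m_K}$ comparison.
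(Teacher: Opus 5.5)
Your proposal is correct and follows the paper's proof essentially verbatim: apply the Taylor bound of Assumption~\ref{Ass:KLTaylor} and the upper Fisher bound \eqref{Eq:LinkUpper}, use $\|h\|_{H_0}\le\lambda_1^{-m_K}\|h\|_{H_{m_K}}$ from the monotonicity of $\lambda_k$, and absorb $\|h\|_{H_{m_K}}^{q_K}\le R_K^{q_K}$ on the ball. Nothing is missing.
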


\begin{proof}
Fix $\theta\in \Theta\cap B^{H_{m_K}}_{R_K}(\theta_0)$ and write $h=\theta-\theta_0$. By \eqref{Eq:KLTaylor} and \eqref{Eq:LinkUpper},
$$
    K(\theta|\theta_0)
    \le
    \tfrac12 I(\theta_0)[h]+C_K\|h\|^{2+q_K}_{H_{m_K}}
    \le
    \tfrac12 C_0\|h\|^2_{H_0}+C_KR_K^{q_K}\|h\|^2_{H_{m_K}},
$$
and $\|h\|_{H_0}\le\lambda_1^{-m_K}\|h\|_{H_{m_K}}$ because $m_K\ge0$ and $\lambda_k\ge\lambda_1$ for all $k$.
\end{proof}

We are now in a position to state and prove the main result of this subsection of the proof, identifying the localising event for the basic stochastic term split \eqref{Eq:AnotherSplit}.

\begin{lemma}
\label{Lem:GoodEvent}
Let $\gamma\in(s\vee p,\alpha\wedge\beta)$, $a\in(0,1/2)$ and $b>0$, and let $\delta$ and $\epsilon_n$ be as in \eqref{Eq:Rho}. For $C_1,C_2>0$, define the events $\Ecal^{(1)}_n:=\{\|\hat D_n\|_{H_{-m_K}}\le n^{-a}\}$, $\Ecal^{(2)}_n:=\{\textstyle\int_{H_{s\vee p}} e^{\ell_n(\theta)}\,d\Pi(\theta)\ge\ e^{-C_1n^{\delta}}\}$,
\begin{align*}
    	\Ecal^{(3)}_n
	&:=\Big\{\textstyle\int_{H_{s\vee p}} (1+\|\theta-\theta_0\|_{H_s}^2)\,e^{\ell_n(\theta)}\,d\Pi(\theta)
        \ \le\ e^{n^{\delta}}\Big\}
\end{align*}
and
\begin{align*}
    \Ecal^{(4)}_n
    &:=\Big\{\textstyle\int_{(B^{H_{\gamma}}_{n^b}(\theta_0))^c} (1+\|\theta-\theta_0\|_{H_s}^2)\,
    e^{\ell_n(\theta)}\,d\Pi(\theta)
        \ \le\ e^{-C_2n^{2b}/2}\Big\},
\end{align*}
Set $\Ecal_n:=\bigcap_{j=1}^4\Ecal^{(j)}_n$. Then, there exist constants $C_1,C_2>0$, depending only on the
model, the prior and the above exponents, such that, for some $C>0$ and all $n$ large enough,
\begin{equation}
\label{Eq:GoodProb}
    \mu^{(n)}_{\theta_0}\big(\Ecal_n^c\big)
    \ \le\
    \exp\big\{-C\,n^{\varsigma}\big\},
    \qquad
    \varsigma:=\min\{1-2a,\ \delta,\ 2b\}>0 .
\end{equation}
\end{lemma}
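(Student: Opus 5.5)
We bound $\mu^{(n)}_{\theta_0}(\Ecal_n^c)\le\sum_{j=1}^4\mu^{(n)}_{\theta_0}((\Ecal^{(j)}_n)^c)$ and treat each event separately. Each failure probability should be at most $\exp\{-Cn^{\varsigma_j}\}$ with $\varsigma_j\in\{1-2a,\delta,2b\}$.

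\emph{Event $\Ecal^{(1)}_n$.} Apply \eqref{Eq:BernsteinNorm} with $u=n^{-a}$. Since $a>0$, the denominator $2(v_T+A_Tn^{-a})$ is bounded, so the exponent is of order $n^{1-2a}$. This gives $\mu^{(n)}_{\theta_0}((\Ecal^{(1)}_n)^c)\le 2e^{-cn^{1-2a}}$.

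\emph{Event $\Ecal^{(2)}_n$.} By \eqref{Eq:LikelihoodIdentity}, restrict the evidence to the small ball $B_n:=B^{H_{m_K}}_{\epsilon_n}(\theta_0)\cap H_{s\vee p}$. On $B_n$, Lemma \ref{Lem:KLUpper} gives $nK(\theta|\theta_0)\le C_Un\epsilon_n^2=C_Un^\delta$. For the linear term, take $\nu$ to be $\Pi$ restricted to $B_n$ and normalised, and apply Jensen's inequality:
$$\log\int_{B_n}e^{\ell_n}\,d\Pi\ge\log\Pi(B_n)-C_Un^\delta+n\hat D_n(\bar h_n),\qquad \bar h_n:=\int(\theta-\theta_0)\,d\nu .$$
Here $\|\bar h_n\|_{H_{m_K}}\le\epsilon_n$ is deterministic. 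Apply \eqref{Eq:BernsteinLinear} with $h=\bar h_n$ and $u=n^{\delta-1}$. Since $\rho<1/2$, one has $n^{\delta-1}\lesssim\epsilon_n$ up to constants, so the exponent is of order $nu^2/\epsilon_n^2=n^{2\delta-1}\cdot n^{2\rho}=n^{\delta}$. The bound $|n\hat D_n(\bar h_n)|\le n^\delta$ therefore fails with probability at most $e^{-cn^\delta}$. Combining this with \eqref{Eq:SmallBallRate} yields $\Ecal^{(2)}_n$ for a large $C_1$.

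\emph{Event $\Ecal^{(3)}_n$.} Use Markov's inequality. Since $E^{(n)}_{\theta_0}[e^{\ell_n(\theta)}]=1$ for each $\theta$, Fubini gives
$$E^{(n)}_{\theta_0}\int(1+\|\theta-\theta_0\|_{H_s}^2)e^{\ell_n}\,d\Pi=\int(1+\|\theta-\theta_0\|^2_{H_s})\,d\Pi<\infty,$$
because $s<\alpha$. Hence the failure probability is at most $Ce^{-n^\delta}$.

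\emph{Event $\Ecal^{(4)}_n$.} The same Fubini identity gives expectation $\int_{(B^{H_\gamma}_{n^b})^c}(1+\|\theta-\theta_0\|^2_{H_s})\,d\Pi$. As in the treatment of $\Bcal_{n,2}$ in the proof of Proposition \ref{Prop:Det}, Fernique's theorem on $H_\gamma$ (using $\gamma<\alpha$ and $\theta_0\in H_\gamma$) together with Cauchy--Schwarz bounds this by $e^{-C_2n^{2b}}$. Markov's inequality then gives failure probability at most $e^{-C_2n^{2b}/2}$.

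Taking the minimum of the exponents gives \eqref{Eq:GoodProb} with $\varsigma=\min\{1-2a,\delta,2b\}$.

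The main obstacle is $\Ecal^{(2)}_n$. The linear fluctuation term must be controlled uniformly over the ball without paying $n\epsilon_n\|\hat D_n\|_{H_{-m_K}}$, which would be too large. The Jensen step, which reduces the problem to a single deterministic direction $\bar h_n$, is what avoids this, and the exponents must match exactly through $n\epsilon_n^2=n^\delta$.
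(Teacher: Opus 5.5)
Your proposal is correct and takes essentially the same route as the paper. You use Bernstein's inequality for $\Ecal^{(1)}_n$. For $\Ecal^{(2)}_n$ you apply Jensen's inequality over the prior conditioned on $B^{H_{m_K}}_{\epsilon_n}(\theta_0)$ and then the scalar Bernstein bound in the single deterministic direction $\bar h_n$; your threshold $u=n^{\delta-1}$ is exactly the paper's $u=\epsilon_n^2$, since $\delta=1-2\rho$. For $\Ecal^{(3)}_n$ and $\Ecal^{(4)}_n$ you use the Fubini identity $E^{(n)}_{\theta_0}[e^{\ell_n(\theta)}]=1$ with Markov's inequality, adding Cauchy--Schwarz and Gaussian tails of $\Pi$ on $H_\gamma$ for the latter.

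Two cosmetic points only. The finite second prior moment in $H_s$ needs $s<\alpha\wedge\beta$, so that also $\theta_0\in H_s$, not just $s<\alpha$. The trivial case $\bar h_n=0$ should be set aside before invoking \eqref{Eq:BernsteinLinear}.
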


\begin{proof}
We upper bound the probability of each event individually. For $\Ecal^{(1)}_n$, we apply \eqref{Eq:BernsteinNorm} with $u=n^{-a}$ obtaining $\mu^{(n)}_{\theta_0}\big((\Ecal^{(1)}_n)^c\big)\le2e^{-c_1 n^{1-2a}}$ for some $c_1>0$, where $1-2a>0$ since $a<1/2$.

The estimate for the second event is a standard argument in Bayesian nonparametrics and follows from the small ball lower bound in Lemma \ref{Lem:SmallBall}, cf.~\citep[Lemma 8.10]{ghosal2017fundamentals}. Indeed, restricting the integral to $B^{H_{m_K}}_{\epsilon_n}(\theta_0)$ and applying Jensen's inequality to the probability measure $\Pi(\cdot|B^{H_{m_K}}_{\epsilon_n}(\theta_0))$,
\begin{align*}
    \int_{H_{s\vee p}}e^{\ell_n(\theta)}\,d\Pi(\theta)
    &\ \ge\
    \Pi(B^{H_{m_K}}_{\epsilon_n}(\theta_0))
    \exp\left\{\int_{B^{H_{m_K}}_{\epsilon_n}(\theta_0)}\ell_n\,d\Pi(\theta|B^{H_{m_K}}_{\epsilon_n}(\theta_0))\right\}\\
    &=
    \Pi(B^{H_{m_K}}_{\epsilon_n}(\theta_0))\,
    e^{\,n\hat D_n(\bar h_n)-n\bar K_n},
\end{align*}
by the identity \eqref{Eq:LikelihoodIdentity}, where
$\bar h_n:=\Pi(B^{H_{m_K}}_{\epsilon_n}(\theta_0))^{-1}\int_{B^{H_{m_K}}_{\epsilon_n}(\theta_0)}(\theta-\theta_0)\,d\Pi(\theta)$
is a Bochner integral in $H_{m_K}$ and
$\bar K_n:=\Pi(B^{H_{m_K}}_{\epsilon_n}(\theta_0))^{-1}\int_{B^{H_{m_K}}_{\epsilon_n}(\theta_0)}K(\theta|\theta_0)\,d\Pi(\theta)$. Here, the interchange of $\hat D_n$, which takes values in $H_{-m_K}$ almost surely, with the Bochner integral is legitimate by the properties of the latter. Since $B^{H_{m_K}}_{\epsilon_n}(\theta_0)$ is closed and convex in $H_{m_K}$, its barycentre lies in $B^{H_{m_K}}_{\epsilon_n}(\theta_0)$, and it follows that $\|\bar h_n\|_{H_{m_K}}\le\epsilon_n$. On the other hand, since $\epsilon_n\le R_K$ for $n$ large, Lemma \ref{Lem:KLUpper} applies pointwise on $B^{H_{m_K}}_{\epsilon_n}(\theta_0)$ and gives $\bar K_n\le C_U\epsilon_n^2$, whence $n\bar K_n\lesssim n\epsilon_n^2 = n^\delta$. If $\bar h_n=0$ the required stochastic bound follows immediately; otherwise, applying \eqref{Eq:BernsteinLinear} with $h=\bar h_n$ (which is deterministic) and $u=\epsilon_n^2$ yields
\begin{align*}
    \mu_{\theta_0}^{(n)}
    \left(
        n|\hat D_n(\bar h_n)|\geq n^\delta
    \right)
    &=
    \mu_{\theta_0}^{(n)}
    \left(
        |\hat D_n(\bar h_n)|\geq\epsilon_n^2
    \right)\\
    &\leq
    2\exp\left\{
        -\frac{n\epsilon_n^4}
        {2\bigl(
            v_T\|\bar h_n\|_{H_{m_K}}^2
            +
            A_T\|\bar h_n\|_{H_{m_K}}\epsilon_n^2
        \bigr)}
    \right\}\\
    &\leq
    2\exp\left\{
        -\frac{n\epsilon_n^2}
        {2(v_T+A_T\epsilon_n)}
    \right\}
    \leq
    2e^{-c_2n^\delta},
\end{align*}
for some $c_2>0$ and all sufficiently large $n$, where we used $\|\bar h_n\|_{H_{m_K}}\leq\epsilon_n$ and $n\epsilon_n^2=n^\delta$. Combining the above with $-\log\Pi(B^{H_{m_K}}_{\epsilon_n}(\theta_0))\lesssim n^{\delta}$ from \eqref{Eq:SmallBallRate} and taking sufficiently large $C_1>0$ then yields $ \mu_{\theta_0}^{(n)}((\Ecal^{(2)}_n)^c) \le 2e^{-c_2n^\delta}$.

Turning to the events $\Ecal^{(3)}_n$ and $\Ecal^{(4)}_n$, note that for every fixed $\theta$, $E^{(n)}_{\theta_0}\big[e^{\ell_n(\theta)}\big]=E_\theta^{(1)}[1]=1$. Fubini's theorem therefore gives, for any Borel set $A\subseteq H_{s\vee p}$,
\begin{equation}
\label{Eq:Fubini}
    E^{(n)}_{\theta_0}
    \left[\int_A (1+\|\theta-\theta_0\|_{H_s}^2)\,e^{\ell_n(\theta)}\,d\Pi(\theta) \right]
    =
    \int_A (1+\|\theta-\theta_0\|_{H_s}^2)d\Pi(\theta).
\end{equation}
Taking $A=H_{s\vee p}$, we obtain $E^{(n)}_{\theta_0}[\int_{H_{s\vee p}} (1+\|\theta-\theta_0\|_{H_s}^2)\,e^{\ell_n(\theta)}\,d\Pi(\theta)]=1+E_\Pi[\|\theta-\theta_0\|^2_{H_s}]<\infty$ since $s<\alpha\wedge\beta$. Then, by Markov's inequality, $\mu^{(n)}_{\theta_0}\big((\Ecal^{(3)}_n)^c\big)=O(e^{-n^{\delta}})$. Lastly, taking $A=(B^{H_{\gamma}}_{n^b}(\theta_0))^c$ in \eqref{Eq:Fubini}, Cauchy--Schwarz inequality gives 
$$
	\int_{(B^{H_{\gamma}}_{n^b}(\theta_0))^c}(1+\|\theta-\theta_0\|_{H_s}^2)d\Pi(\theta)
	\le\left( E_\Pi\left[(1+\|\theta-\theta_0\|_{H_s}^2)^2\right]\right)^{1/2}\Pi\left((B^{H_{\gamma}}_{n^b}(\theta_0))^c
	\right)^{1/2}.
$$
The expectation on the right hand side is finite by Fernique's theorem \citep[Theorem~2.8.5]{bogachev1998gaussian}. Furthermore, since $\gamma<\alpha$, $\Pi$ is supported on $H_\gamma$, and the Borell--Sudakov--Tsirelson inequality
\citep[Theorem~2.5.8]{gine2016mathematical} gives 
$$
	\Pi((B^{H_{\gamma}}_{n^b}(\theta_0))^c)
	\le \Pi(\theta : \|\theta\|_{H^\gamma}> n^b - \|\theta_0\|_{H^\gamma}) \le e^{-c_3n^{2b}}
$$ 
for some $c_3>0$ and all sufficiently large $n$. Another application of Markov's inequality implies that upon taking $C_2>0$ small enough, $\mu^{(n)}_{\theta_0}\big((\Ecal^{(4)}_n)^c\big)\le e^{-c_4n^{2b}/2}$ for some $c_4>0$. Adding the four upper bounds proves \eqref{Eq:GoodProb}.
\end{proof}

%
%
%

\subsubsection{Bounds for the conditioning errors}

The next preparatory step for the proof of Proposition \ref{Prop:Stoch} is to control, over the localising event $\Ecal_n$ constructed in Lemma \ref{Lem:GoodEvent}, the Wasserstein distance between the probability measures $\nu_{\hat T_n},\nu_{T_{\theta_0}}$ and, respectively, their conditioned versions $\tilde\nu_{\hat T_n},\tilde\nu_{T_{\theta_0}}$ over the set \eqref{Eq:BallInclusion}. We first show that on $\Ecal_n$, the posterior masses and the weighted posterior second moments outside the conditioning set $\Bcal$ are exponentially negligible. This ensures that the conditioning operation introduces an error smaller than any polynomial order.

\begin{lemma}
\label{Lem:Exit}
Let $b$ and $\omega$ be as chosen in \eqref{Eq:ExponentWindows} and \eqref{Eq:ThetaB}, and let $\Ecal_n$ be the event from Lemma \ref{Lem:GoodEvent}. Then, there exists a constant $C>0$ such that, on $\Ecal_n$ and for all $n$ large enough,
\begin{equation}
\label{Eq:ExitBound}
    \int_{\Bcal^c} (1+\|\theta-\theta_0\|_{H_s}^2)\,
    e^{\ell_n(\theta)}\,d\Pi(\theta)
    \ \le\
    e^{-Cn^{\varsigma'}},
    \qquad
    \varsigma'=\min\{1-\omega,\ 2b\} >\delta.
\end{equation}
Consequently, on $\Ecal_n$, both
$\nu_{\hat T_n}(\Bcal^c)$ and $\int_{\Bcal^c}\|\theta-\theta_0\|^2_{H_s}\,d\nu_{\hat T_n}(\theta)$ are bounded by $e^{C_1n^{\delta}-Cn^{\varsigma'}}$, and hence decay faster than any negative power of $n$.
\end{lemma}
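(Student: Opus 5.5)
We split $\Bcal^c=\{\|\theta-\theta_0\|_{H_p}>R\}$ along the $H_\gamma$-geometry, exactly as in the exterior bound of Proposition \ref{Prop:Det}:
$$
\Bcal^c=\Bcal_{n,1}\cup\Bcal_{n,2},\qquad
\Bcal_{n,1}:=\Bcal^c\cap B^{H_\gamma}_{n^b}(\theta_0),\qquad
\Bcal_{n,2}:=\Bcal^c\cap \big(B^{H_\gamma}_{n^b}(\theta_0)\big)^c .
$$

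\textbf{The piece $\Bcal_{n,2}$.} This is immediate on $\Ecal_n$. Indeed, $\Bcal_{n,2}\subseteq (B^{H_\gamma}_{n^b}(\theta_0))^c$, so the event $\Ecal^{(4)}_n$ bounds the integral directly by $e^{-C_2n^{2b}/2}$.

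\textbf{The piece $\Bcal_{n,1}$: weak-norm lower bound.} Here we use the likelihood identity \eqref{Eq:LikelihoodIdentity}, $\ell_n(\theta)=n\hat D_n(\theta-\theta_0)-nK(\theta|\theta_0)$, and control each term separately. For the KL term, the interpolation inequality \eqref{Eq:DetWeakInterpolation} with $\|\theta-\theta_0\|_{H_p}>R$ and $\|\theta-\theta_0\|_{H_\gamma}\le n^b$ gives
$$
\|\theta-\theta_0\|_{H_{-r_K}}\gtrsim n^{-b(p+r_K)/(\gamma-p)}.
$$
Since $\varphi$ is nondecreasing with $\varphi(u)\gtrsim u^2$ near zero, Assumption \ref{Ass:KLLB} then yields
$$
nK(\theta|\theta_0)\gtrsim n^{1-\omega},\qquad \omega=\frac{2b(p+r_K)}{\gamma-p}.
$$

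\textbf{The piece $\Bcal_{n,1}$: linear term.} On $\Ecal^{(1)}_n$ we have
$$
|n\hat D_n(\theta-\theta_0)|\le n^{1-a}\|\theta-\theta_0\|_{H_{m_K}}.
$$
Since $m_K\le p<\gamma$, $\|\theta-\theta_0\|_{H_{m_K}}\lesssim\|\theta-\theta_0\|_{H_\gamma}\le n^b$, so the linear term is at most $n^{1-a+b}$. By \eqref{Eq:ThetaB}, $a>b+\omega$, hence $1-a+b<1-\omega$ and the linear term is negligible against $cn^{1-\omega}$. Consequently $\ell_n(\theta)\le -c'n^{1-\omega}$ on $\Bcal_{n,1}$. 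Combining this with $1+\|\theta-\theta_0\|^2_{H_s}\lesssim 1+n^{2b}$ (as $s<\gamma$) and $\Pi(\Bcal_{n,1})\le1$, the integral over $\Bcal_{n,1}$ is at most $e^{-c''n^{1-\omega}}$.

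\textbf{Conclusion and the exponent comparison.} Adding the two pieces gives \eqref{Eq:ExitBound} with $\varsigma'=\min\{1-\omega,2b\}$. We have $\varsigma'>\delta$ because $1-\omega>\delta$ by \eqref{Eq:ThetaB} and $2b>\delta$ by the choice $b>\delta/2$. For the consequences, the posterior normaliser satisfies $\int e^{\ell_n}\,d\Pi\ge e^{-C_1n^\delta}$ on $\Ecal^{(2)}_n$. Dividing by it bounds both $\nu_{\hat T_n}(\Bcal^c)$ and $\int_{\Bcal^c}\|\theta-\theta_0\|^2_{H_s}\,d\nu_{\hat T_n}$ by $e^{C_1n^\delta-Cn^{\varsigma'}}$, which decays superpolynomially since $\varsigma'>\delta$.

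The only delicate point is the exponent bookkeeping on $\Bcal_{n,1}$: the random linear term must be beaten by the deterministic KL lower bound, which is exactly why $a>b+\omega$ was imposed in \eqref{Eq:ThetaB}.
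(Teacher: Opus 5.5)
Your proposal is correct and follows the paper's proof essentially step by step. You use the same split of $\Bcal^c$ at the $H_\gamma$-ball of radius $n^b$, bound the outer piece via $\Ecal^{(4)}_n$, and get $nK(\theta|\theta_0)\gtrsim n^{1-\omega}$ on the inner piece from interpolation and Assumption \ref{Ass:KLLB}. You also beat the linear term $n^{1-a+b}$ on $\Ecal^{(1)}_n$ using $a>b+\omega$, and divide by the $\Ecal^{(2)}_n$ denominator bound for the posterior consequences.
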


\begin{proof}
Split $\Bcal^c=(\Bcal^c\cap B^{H_{\gamma}}_{n^b}(\theta_0))\cup(\Bcal^c\cap(B^{H_{\gamma}}_{n^b}(\theta_0))^c)$. The integral over $\Bcal^c\cap(B^{H_{\gamma}}_{n^b}(\theta_0))^c$ is bounded by $e^{-C_2n^{2b}/2}$ on $\Ecal^{(4)}_n$. Hence, it suffices to treat $\Bcal^c\cap B^{H_{\gamma}}_{n^b}(\theta_0)$, on which $h:=\theta-\theta_0$ satisfies $\|h\|_{H_p}>R$ and $\|h\|_{H_\gamma}\le n^b$, upper bounding, in view of the likelihood identity \eqref{Eq:LikelihoodIdentity},
\begin{equation}
\label{Eq:ThingToBound}
	\int_{\Bcal^c\cap B_{n^b}^{H_\gamma}(\theta_0)}
	\bigl(1+\|h\|_{H_s}^2\bigr)
	\exp\left\{
	n\hat D_n(h)
	-
	nK(\theta\mid\theta_0)
	\right\}
	\,d\Pi(\theta).
\end{equation}

Fix any $\theta\in \Bcal^c\cap B_{n^b}^{H_\gamma}(\theta_0)$. Since $m_K\le p<\gamma$ by our choice of $\gamma$, we have $\|h\|_{H_{m_K}}\le\lambda_1^{m_K-\gamma}\|h\|_{H_\gamma}$, so on $\Ecal^{(1)}_n$,
$$
    n\big|\hat D_n(h)\big|
    \le
    n\|\hat D_n\|_{H_{-m_K}}\|h\|_{H_{m_K}}
    \lesssim
    n^{1-a+b}.
$$
Next, writing $p=-r_K(1-z)+\gamma z$ with $z=(p+r_K)/(\gamma+r_K)$, the interpolation inequality \eqref{Eq:Interp} gives $\|h\|_{H_p}\le\|h\|^{1-z}_{H_{-r_K}}\|h\|^{z}_{H_\gamma}$ and hence
$$
    \|h\|_{H_{-r_K}}
    \ \ge\
    \|h\|_{H_p}^{\frac{\gamma+r_K}{\gamma-p}}
    \|h\|_{H_\gamma}^{-\frac{p+r_K}{\gamma-p}}
    \ \ge\
    R^{\frac{\gamma+r_K}{\gamma-p}}\,n^{-\omega/2},
$$
with $\omega$ as in \eqref{Eq:ThetaB}. The right-hand side tends to $0$, so, by Assumption \ref{Ass:KLLB}, $ n K(\theta|\theta_0)\gtrsim n^{1-\omega}$. Finally, since $s\le\gamma$, $1+\|h\|_{H_s}^2 \le1+\lambda_1^{2(s-\gamma)}\|h\|^2_{H_\gamma}\lesssim n^{2b}$. Combining the three obtained estimates and recalling $a>b+\omega$ from \eqref{Eq:ThetaB}, which ensures $1-a+b<1-\omega$, we obtain for all sufficiently large $n$ that \eqref{Eq:ThingToBound} is upper bounded by
$$
    n^{2b}\exp\big\{c_1 n^{1-a+b}-c_2n^{1-\omega}\big\}
    \ \le\
    e^{-\frac{c_2}{2}n^{1-\omega}},
$$
for some $c_1,c_2>0$. This concludes the verification of \eqref{Eq:ExitBound}. The final assertion follows upon dividing \eqref{Eq:ExitBound} by the denominator bound of $\Ecal^{(2)}_n$, using that $1\vee \|\theta-\theta_0\|^2_{H_s}\le 1 + \|\theta-\theta_0\|^2_{H_s}$, and noting that $\varsigma'>\delta$.
\end{proof}

The preceding exterior-mass estimates imply that, on $\Ecal_n$, conditioning either measure $\nu_{\hat T_n}$ or $\nu_{T_{\theta_0}}$ on $\Bcal$ introduces a negligible error in Wasserstein distance, as we now show.

\begin{lemma}
\label{Lem:CondError}
For every $C>0$ and all $n$ large enough, 
\begin{equation}
\label{Eq:CondErrDet}
    \Wcal_2^{\Pcal_2(H_s)}
    \big(\nu_{T_{\theta_0}},\tilde\nu_{T_{\theta_0}}\big)
    \lesssim n^{-C}.
\end{equation}
Moreover, on the event $\Ecal_n$ from Lemma \ref{Lem:GoodEvent}, with $\varsigma'>0$ the exponent from Lemma \ref{Lem:Exit}, for some constant $C'>0$ and for and for all $n$ large enough,
\begin{equation}
\label{Eq:CondErrRandom}
    \Wcal_2^{\Pcal_2(H_s)}
    \big(\nu_{\hat T_n},\tilde\nu_{\hat T_n}\big)
    \ \le\
    e^{-C'n^{\varsigma'}}.
\end{equation}
\end{lemma}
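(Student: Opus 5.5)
The plan is to use an explicit coupling between a measure and its conditioning on $\Bcal$, and to reduce both claims to exterior mass and exterior second moment bounds. Let $\nu$ be a probability measure on $H_{s\vee p}$ and $\tilde\nu=\nu(\cdot\,|\,\Bcal)$. Take $\varepsilon:=\nu(\Bcal^c)$. Define the coupling $\gamma$ that keeps the part of $\nu$ on $\Bcal$ in place, $\gamma_1(d\theta,d\theta')=\nu|_{\Bcal}(d\theta)\,\delta_\theta(d\theta')$, and sends the exterior mass independently to $\tilde\nu$, $\gamma_2=\nu|_{\Bcal^c}\otimes\tilde\nu$. Its marginals are $\nu$ and $\nu|_\Bcal+\varepsilon\tilde\nu=(1-\varepsilon)\tilde\nu+\varepsilon\tilde\nu=\tilde\nu$. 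Hence
$$
\Wcal_2^{\Pcal_2(H_s)}(\nu,\tilde\nu)^2\le\int_{\Bcal^c}\int_{\Bcal}\|\theta-\theta'\|_{H_s}^2\,d\tilde\nu(\theta')\,d\nu(\theta)
\le 2\int_{\Bcal^c}\|\theta-\theta_0\|_{H_s}^2\,d\nu+2\,\varepsilon\int_{\Bcal}\|\theta'-\theta_0\|_{H_s}^2\,d\tilde\nu .
$$
The last integral is at most $\nu(\Bcal)^{-1}\int\|\theta-\theta_0\|_{H_s}^2\,d\nu$, and $\nu(\Bcal)\ge1/2$ for large $n$. So it suffices to show two things: the exterior mass and exterior second moment are negligible, and the total second moment is at most polynomial in $n$.

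\textbf{Random claim \eqref{Eq:CondErrRandom}.} Here $\nu=\nu_{\hat T_n}$ on $\Ecal_n$. Lemma \ref{Lem:Exit} directly gives $\nu_{\hat T_n}(\Bcal^c)$ and $\int_{\Bcal^c}\|\theta-\theta_0\|_{H_s}^2d\nu_{\hat T_n}$ bounded by $e^{C_1n^\delta-Cn^{\varsigma'}}$. For the total second moment, divide the bound in $\Ecal^{(3)}_n$ by the denominator bound in $\Ecal^{(2)}_n$, which gives at most $e^{(1+C_1)n^\delta}$. Substituting both into the display yields a bound of the form
$$
e^{C_1n^\delta-Cn^{\varsigma'}}\bigl(1+e^{(1+C_1)n^\delta}\bigr).
$$
Since $\varsigma'>\delta$, this is at most $e^{-C'n^{\varsigma'}}$ after taking square roots and shrinking constants.

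\textbf{Deterministic claim \eqref{Eq:CondErrDet}.} Here $\nu=\nu_{T_{\theta_0}}$, whose density is proportional to $e^{-nK(\theta|\theta_0)}$. I reuse the estimates from the proof of Proposition \ref{Prop:Det}. Since $\Bcal=B^\Theta_R(\theta_0)$ with $R$ small, \eqref{Eq:DetExteriorBound} gives
$$
\int_{\Bcal^c}\bigl(1+\|\theta-\theta_0\|_{H_s}^2\bigr)e^{-nK}\,d\Pi\le e^{-c_8n^{\delta'}}
$$
with $\delta'>1-\tfrac{2(\alpha\wedge\beta)}{2\alpha+d}$. The radius used there was any $R<R_0$ with the stated inclusion, so I take the same $R$ or adjust it. The denominator satisfies
$$
\int e^{-nK}\,d\Pi\ge(1+o(1))Z^{(n)}_{I,\theta_0}\ge\exp\bigl\{-Cn^{1-\frac{2(\alpha\wedge\beta)}{2\alpha+d}}\bigr\}
$$
by \eqref{Eq:ALowerBound} and \eqref{Eq:QuadraticGaussianDenom}. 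Hence both the exterior mass and the exterior second moment are $o(n^{-C})$ for every $C$; this is exactly \eqref{Eq:DetRelativeLocalisation} specialised to $\Bcal^c$. The total second moment is $O(1)$ by Proposition \ref{Prop:Det}. Plugging these into the coupling bound gives \eqref{Eq:CondErrDet}.

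\textbf{Expected obstacle.} The only delicate point is bookkeeping: confirming that the conditioning ball $\Bcal$ in \eqref{Eq:BallInclusion} is compatible with the radius $R$ used in Proposition \ref{Prop:Det}. The argument there works for any sufficiently small $R$, so I would simply fix the common smaller radius in both places.
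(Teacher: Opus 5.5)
Your proposal is correct and matches the paper's proof essentially step for step. You use the same coupling (identity on $\Bcal$, product $\nu|_{\Bcal^c}\otimes\tilde\nu$ off it), the same two-term bound, and the same inputs: \eqref{Eq:DetExteriorBound} with \eqref{Eq:QuadraticGaussianDenom} and Proposition \ref{Prop:Det} for the deterministic kernel, and Lemma \ref{Lem:Exit} with the events $\Ecal^{(2)}_n,\Ecal^{(3)}_n$ for the random one. Your remark about fixing a common radius $R$ is also how the paper implicitly proceeds.
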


\begin{proof}
Fix any Borel probability measure $\nu\in\Pcal_2(H_s)$ such that $\nu(\Bcal)>0$, and let 
$$
	\tilde\nu(\cdot):=\nu(\cdot|\Bcal)
    =
    \frac{\nu|_{\Bcal}(\cdot)}{\nu(\Bcal)}
    =
    \frac{\nu|_{\Bcal}(\cdot)}
    {1-\nu(\Bcal^c)}.
$$
A simple coupling between $\nu$ and $\tilde\nu$ is given by
$$
    \pi
    :=
    (\operatorname{Id},\operatorname{Id})_\#
    \bigl(\nu|_{\Bcal}\bigr)
    +
    \frac{1}{1-\nu(\Bcal^c)}
    \bigl(\nu|_{\Bcal^c}\bigr)
    \otimes
    \bigl(\nu|_{\Bcal}\bigr).
$$
Indeed, its first marginal is given by, for any measurable $A\subseteq H_s$,
\begin{align*}
    \pi_1(A)
    &=
    \bigl[
	(\operatorname{Id},\operatorname{Id})_\#
	(\nu|_{\Bcal})
	\bigr](A\times H_s)
	+
	\frac{1}{1-\nu(\Bcal^c)}
	\bigl[
	(\nu|_{\Bcal^c})\otimes(\nu|_{\Bcal})
	\bigr](A\times H_s)\\
	&= \nu(A\cap \Bcal)
	+\frac{\nu(A\cap\Bcal^c)\nu(\Bcal)}{1-\nu(\Bcal^c)}
	=\nu(A\cap \Bcal) + \nu(A\cap \Bcal^c) = \nu(A),
\end{align*}
and similarly it holds that the second marginal $\pi_2=\tilde\nu$. Consequently, since there is no transport associated to the diagonal part of $\pi$,
\begin{align*}
    \Wcal_2^{\Pcal_2(H_s)}(\nu,\tilde\nu)^2
    &\le
    \int
    \|\theta-\eta\|_{H_s}^2
    \,d\pi(\theta,\eta)\\
    &
	=
	\frac{1}{1-\nu(\Bcal^c)}
	\int_{\Bcal^c}\int_{\Bcal}
	\|\theta-\eta\|_{H_s}^2
	\,d\nu(\eta)\,d\nu(\theta)\\
    &\le
    2\int_{\Bcal^c}
    \|\theta-\theta_0\|_{H_s}^2
    \,d\nu(\theta)
    +
    \frac{2\nu(\Bcal^c)}
    {1-\nu(\Bcal^c)}
    \int_{\Bcal}
    \|\eta-\theta_0\|_{H_s}^2
    \,d\nu(\eta)\\
    &=2\int_{\Bcal^c}
    \|\theta-\theta_0\|_{H_s}^2
    \,d\nu(\theta)
    +
    2\nu(\Bcal^c)
    \int_{\Bcal}
    \|\eta-\theta_0\|_{H_s}^2
    \,d\tilde\nu(\eta).
\end{align*}

Apply the preceding inequality with $\nu=\nu_{T_{\theta_0}}$. The estimates \eqref{Eq:DetExteriorBound} and \eqref{Eq:QuadraticGaussianDenom} established in the proof of Proposition \ref{Prop:Det} imply that, for every $M>0$,
$$
    \int_{\Bcal^c}
    \bigl(
        1+\|\theta-\theta_0\|_{H_s}^2
    \bigr)
    e^{-nK(\theta\mid\theta_0)}
    \,d\Pi(\theta)
    \le
        n^{-M}Z_{I,\theta_0}^{(n)}
   ,
    \qquad
    \int_{H_{s\vee p}}
    e^{-nK(\theta\mid\theta_0)}
    \,d\Pi(\theta)
    \ge
    \frac12Z_{I,\theta_0}^{(n)},
$$
for all sufficiently large $n$, where $Z_{I,\theta_0}^{(n)}$ is the normalising constant in \eqref{Eq:QuadraticDetMeasure}. Taking ratios shows that
$$
    \nu_{T_{\theta_0}}(\Bcal^c)
    \vee
    \int_{\Bcal^c}
    \|\theta-\theta_0\|_{H_s}^2
    \,d\nu_{T_{\theta_0}}(\theta)
    =
    O(n^{-M})
$$
for every $M>0$. In particular, $\nu_{T_{\theta_0}}(\Bcal^c)\le1/2$ for all sufficiently large $n$. Proposition \ref{Prop:Det}  also gives
$$
    \int_{\Bcal}
    \|\theta-\theta_0\|_{H_s}^2
    \,d\tilde\nu_{T_{\theta_0}}(\theta)
    \le
    2
    \int_{H_{s\vee p}}
    \|\theta-\theta_0\|_{H_s}^2
    \,d\nu_{T_{\theta_0}}(\theta)
    =
    O(1).
$$
Applying the coupling bound once more yields $\Wcal_2^{\Pcal_2(H_s)}\bigl(\nu_{T_{\theta_0}}, \tilde\nu_{T_{\theta_0}}\bigr)^2 =o(n^{-M})$ for every $M>0$. Since $M$ is arbitrary, replacing $M$ by $2C$ and taking square roots proves \eqref{Eq:CondErrDet}.

There remains to consider $\nu=\nu_{\hat T_n}$. By Lemma \ref{Lem:Exit}, on $\Ecal_n$,
$$
    \nu_{\hat T_n}(\Bcal^c)
    \vee
    \int_{\Bcal^c}
    \|\theta-\theta_0\|_{H_s}^2
    \,d\nu_{\hat T_n}(\theta)
    \le
    \exp\left\{
        C_1n^\delta-Cn^{\varsigma'}
    \right\}.
$$
Since $\varsigma'>\delta$, the right-hand side converges to zero. In particular, $\nu_{\hat T_n}(\Bcal^c)\le 1/2$ for all sufficiently large $n$. We next control the second moment of $\tilde\nu_{\hat T_n}$ which equals
$$
    \int_{\Bcal}
    \|\theta-\theta_0\|_{H_s}^2
    \,d\tilde\nu_{\hat T_n}(\theta)
    =
    \frac{
        \displaystyle
        \int_{\Bcal}
        \|\theta-\theta_0\|_{H_s}^2
        e^{\ell_n(\theta)}
        \,d\Pi(\theta)
    }{
        \displaystyle
        \int_{\Bcal}
        e^{\ell_n(\theta)}
        \,d\Pi(\theta)
    }.
$$
On $\Ecal_n$,
$$
    \int_{\Bcal}
    e^{\ell_n(\theta)}
    \,d\Pi(\theta)
    =
    \nu_{\hat T_n}(\Bcal)
    \int_{H_{s\vee p}}
    e^{\ell_n(\theta)}
    \,d\Pi(\theta)
    \ge
    \frac12e^{-C_1n^\delta},
    \quad
    \int_{\Bcal}
    \|\theta-\theta_0\|_{H_s}^2
    e^{\ell_n(\theta)}
    \,d\Pi(\theta)
    \le
    e^{n^\delta},
$$
whence $\int_{\Bcal}\|\theta-\theta_0\|_{H_s}^2\,d\tilde\nu_{\hat T_n}(\theta)\le2e^{(1+C_1)n^\delta}$. Thus, on $\Ecal_n$, the coupling bound gives
$$
    \Wcal_2^{\Pcal_2(H_s)}
    \bigl(
        \nu_{\hat T_n},
        \tilde\nu_{\hat T_n}
    \bigr)^2
    \le
    2e^{C_1n^\delta-Cn^{\varsigma'}}
    +
    4e^{(1+2C_1)n^\delta-Cn^{\varsigma'}}
    \le
    e^{-2C'n^{\varsigma'}},
$$
holding for some $C'>0$ and for all sufficiently large $n$, having used that $\varsigma'>\delta$. Taking square roots proves \eqref{Eq:CondErrRandom}.
\end{proof}

%
%
%

\subsubsection{Transport techniques for conditioned posteriors}

We now come to the estimate of the Wasserstein distance between the conditioned probability measures $\tilde \nu_{\hat T_n}(\cdot) = \nu_{\hat T_n}(\cdot | \Bcal)$ and $\tilde \nu_{T_{\theta_0}}(\cdot) = \nu_{T_{\theta_0}}(\cdot | \Bcal)$. A key technical tool is the Poincaré inequality derived below, which provides an upper bound, uniformly for $\tau\in\Theta^*$, for the variances $Var_{\tilde \nu_\tau}[F(\theta)]$ of \emph{regular cylindrical functions} $F:\Theta\to\R$. These take the form $F(\theta)=f(\theta_1,\dots,\theta_K)$ for some $K\in\N$ and some bounded and smooth $f:\R^K\to\R$ with bounded derivatives of all orders. For any $q\in\R$, let
$$
	\nabla^{H_q}F(\theta)=\sum_{k=1}^K\lambda_k^{-2q}\partial_kf(\theta_1,\dots,\theta_K)\,e_k,
	\qquad \theta\in\Theta,
$$
be the gradient of $F$ in the $H_q$-geometry, where $\partial_kf$, $k=1,\dots,K$ are the standard partial derivatives of $f$.

The proof of the aforementioned Poincaré inequality hinges on an approximation argument based on the finite-dimensional Galerkin subspaces $\Theta_K:=\operatorname{span}\{e_1,\dots,e_K\}$, $K\in\N$, and on the associated affine spaces $\Theta_{0,K}:=\theta_0+\Theta_K$. We denote by $P_K:\Theta\to\Theta_K$ the canonical projection operator and let $P_{0,K}\theta:=P_K\theta+(I-P_K)\theta_0$, which satisfies
\begin{equation}
\label{Eq:GalerkinProjection}
    P_{0,K}\theta-\theta_0=P_K(\theta-\theta_0),
    \qquad \theta\in\Theta.
\end{equation}
Throughout, for any fixed $q\in[0,\alpha+d/2]$, we regard $\Theta_{0,K}$ as a $K$-dimensional Euclidean affine space equipped with the metric induced by $H_q$. Its tangent space coincides with $\Theta_K$, equipped with the $H_q$-geometry, for which $\{\lambda_k^{-q}e_k\}_{k=1}^K$ is an orthonormal basis. By definition $P_{0,K}$, every $\theta\in\Theta_{0,K}$ admits the unique representation $\theta=(I-P_K)\theta_0+\sum_{k=1}^K\theta_ke_k$.

Next, define the push-forward prior distribution $\Pi_{0,K}:=(P_{0,K})_\#\Pi$ on $\Theta_{0,K}$. For $\tau\in\Theta^*$, denote by $\nu_{\tau,K}$ the $\tau$-induced Galerkin posterior on $\Theta_{0,K}$, given by
\begin{equation}
\label{Eq:CondGalPost}
	d\nu_{\tau,K}(\theta) = \frac{\exp\{n[\tau(\theta)-M(\theta)]\}}
	{\int_{\Theta_{0,K}}\exp\{n[\tau(\theta)-M(\theta)]\}d\Pi_{0,K}(\theta)}d\Pi_{0,K}(\theta),
	\qquad \theta\in\Theta_{0,K}.
\end{equation}
Further set $\Bcal_K:=\Bcal\cap\Theta_{0,K}$ and consider the conditioned Galerkin posterior $\tilde\nu_{\tau,K}(\cdot):=\nu_{\tau,K}(\cdot\,|\,\Bcal_K)$ on $\Theta_{0,K}$. By \eqref{Eq:GalerkinProjection}, $\Bcal_K=\{\theta\in\Theta_{0,K}:\|P_K(\theta-\theta_0)\|_{H_p}\le R\}$ is a closed, bounded and convex subset of the finite-dimensional affine space $\Theta_{0,K}$ containing $\theta_0$; in particular, $\Pi_{0,K}(\Bcal_K)>0$ and $\tilde\nu_{\tau,K}$ is well defined.

Lastly, we introduce two diagonal operators that control the reciprocal of the curvature of the negative log-posterior through a directional lower bound. For $ q\in[0,\alpha+d/2]$ and $n\in\N$, let $\Acal_{n,q}:H_q\to H_q$ be the linear diagonal operator
\begin{equation}
\label{Eq:Aq}
    \Acal_{n,q}e_k
    :=
    \frac{\lambda_k^{2q}}{nc_0+\lambda_k^{2\alpha+d}}e_k,
    \qquad k\in\N,
\end{equation}
with $c_0>0$ the local coercivity constant of Assumption \ref{Ass:Link}, and let
\begin{equation}
\label{Eq:CondPoincareRate}
    \kappa_{n,q}
    :=
    \|\Acal_{n,q}\|_{\Lcal(H_q)}
    =\sup_{\|h\|_{H_q}=1}\|\Acal_{n,q}h\|_{H_q}
    =
    \sup_{k\in\N}\frac{\lambda_k^{2q}}{nc_0+\lambda_k^{2\alpha+d}}
    \ \lesssim\
    n^{-1+\frac{2q}{2\alpha+d}},
\end{equation}
the upper bound following by maximising over the spectral variable. The restriction $q\le\alpha+d/2$ ensures that the displayed supremum is finite. Define also the positive linear operator $\Ccal_n:\Theta^*\to\Theta$ by
\begin{equation}
\label{Eq:Cn}
    \Ccal_n\chi
    :=
    \sum_{k=1}^\infty
    \frac{\chi(e_k)}{nc_0+\lambda_k^{2\alpha+d}}e_k,
    \qquad \chi\in\Theta^*,
\end{equation}
for which we have, since $\alpha>p$,
$$
    \|\Ccal_n\chi\|_{H_p}^2
    =
    \sum_{k=1}^\infty
    \frac{\lambda_k^{2p}\chi(e_k)^2}
    {(nc_0+\lambda_k^{2\alpha+d})^2}
    \lesssim
    \sum_{k=1}^\infty\lambda_k^{-2p}\chi(e_k)^2
    =
    \|\chi\|_{H_{-p}}^2.
$$

\begin{lemma}
\label{Lem:CondPoincare}
Let Assumption \ref{Ass:Link} be satisfied and suppose that $M$ is twice continuously Fr\'echet differentiable on $\Theta$. Then, for every $q\in[0,\alpha+d/2]$, $n\in\N$, $\tau\in\Theta^*$, and any regular cylindrical function $F:\Theta\to\R$,
\begin{equation}
\label{Eq:CondPoincare}
    Var_{\tilde\nu_\tau}\big[F(\theta)\big]
    \le
    \int_{\Bcal}
    \big\langle
        \Acal_{n,q}\nabla^{H_q}F(\theta),
        \nabla^{H_q}F(\theta)
    \big\rangle_{H_q}
    \,d\tilde\nu_\tau(\theta)
    \le
    \kappa_{n,q}
    \int_{\Bcal}
    \|\nabla^{H_q}F(\theta)\|_{H_q}^2
    \,d\tilde\nu_\tau(\theta).
\end{equation}
\end{lemma}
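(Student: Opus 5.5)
Since $F$ depends on $(\theta_1,\dots,\theta_K)$ only, I will prove \eqref{Eq:CondPoincare} first for the conditioned Galerkin posteriors $\tilde\nu_{\tau,K'}$ on $\Theta_{0,K'}$, $K'\ge K$, and then let $K'\to\infty$. On $\Theta_{0,K'}\cong\R^{K'}$ (coordinates $\theta_1,\dots,\theta_{K'}$), $\tilde\nu_{\tau,K'}$ has Lebesgue density proportional to $e^{-V}1_{\Bcal_{K'}}$, with
$$
V(\theta)=-n[\tau(\theta)-M(\theta)]+\tfrac12\sum_{k\le K'}\lambda_k^{2\alpha+d}\theta_k^2 .
$$
Because $M$ is $C^2$ and $D^2M(\theta)=I(\theta)$, we get $D^2V(\theta)=nI(\theta)|_{\Theta_{K'}}+Q_{\Pi,K'}^{-1}$. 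The set $\Bcal_{K'}\subseteq B^\Theta_{R_0}(\theta_0)$ is convex, so \eqref{Eq:LinkLower} yields, on $\Bcal_{K'}$, the matrix lower bound
$$
D^2V\succeq nc_0\,\mathrm{Id}+\mathrm{diag}(\lambda_k^{2\alpha+d})=:\Lambda_{n,K'} ,
$$
in Euclidean coordinates. This is diagonal, and the $\tau$-dependent part is linear, so it does not affect the Hessian.

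The key step is a Brascamp--Lieb inequality for log-concave measures restricted to a convex set. It states that $Var_\mu(f)\le\int\langle (D^2V)^{-1}\nabla f,\nabla f\rangle d\mu$ for $\mu\propto e^{-V}1_C$, with $C$ convex and $V$ strictly convex on $C$. I would obtain it via \citet{bakry2008simple}. Concretely, approximate $1_C$ by $e^{-W_j}$ with $W_j$ convex, smooth and increasing to $+\infty\cdot 1_{C^c}$ (for instance $j\,\mathrm{dist}(\cdot,C)^2$ smoothed). Brascamp--Lieb for $V+W_j$ gives $(D^2V+D^2W_j)^{-1}\preceq (D^2V)^{-1}$. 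To handle $V$ outside $C$, I would use a convex extension of $V$ whose Hessian stays $\succeq\Lambda_{n,K'}$; replacing $V$ outside a neighbourhood by its quadratic minorant construction, or more simply extending the Hessian bound via $V+W_j$ where $W_j$ dominates, works. Letting $j\to\infty$ by dominated convergence yields
$$
Var_{\tilde\nu_{\tau,K'}}(f)\le\int\langle\Lambda_{n,K'}^{-1}\nabla f,\nabla f\rangle\,d\tilde\nu_{\tau,K'} .
$$
Since $F$ depends on the first $K$ coordinates only, this equals $\sum_{k\le K}(\partial_kf)^2/(nc_0+\lambda_k^{2\alpha+d})$. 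Writing $\nabla^{H_q}F=\sum\lambda_k^{-2q}\partial_kf\,e_k$, a direct computation gives
$$
\langle\Acal_{n,q}\nabla^{H_q}F,\nabla^{H_q}F\rangle_{H_q}=\sum_k\lambda_k^{2q}\frac{\lambda_k^{2q}}{nc_0+\lambda_k^{2\alpha+d}}\lambda_k^{-4q}(\partial_kf)^2 ,
$$
which is exactly the same quantity. The first inequality in \eqref{Eq:CondPoincare} therefore holds at level $K'$. The second follows at once from $\|\Acal_{n,q}\|_{\Lcal(H_q)}=\kappa_{n,q}$.

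Finally, I pass to the limit $K'\to\infty$. Write $\tilde\nu_{\tau,K'}$ as the pushforward structure of $\Pi$ under $P_{0,K'}$. For $\Pi$-a.e. $\theta$, we have $P_{0,K'}\theta\to\theta$ in $\Theta$, because $\Pi$ charges $H_p$ and $\alpha>p$. Continuity of $M$ and $\tau$ on $\Theta$ then gives pointwise convergence of the weights $e^{n[\tau-M]}(P_{0,K'}\theta)1_{\Bcal}(P_{0,K'}\theta)$. For $\theta$ with $\|\theta-\theta_0\|_\Theta\neq R$, which holds $\Pi$-a.s. because the sphere is null, the indicator converges as well. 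Domination comes from $\tau(P_{0,K'}\theta)\le\|\tau\|_{\Theta^*}(\|\theta_0\|_\Theta+R)$ on the ball, together with $M\ge$ an affine minorant by convexity. Since $F$ and $\nabla F$ are bounded and continuous, both sides of the Galerkin inequality converge to those of \eqref{Eq:CondPoincare}.

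The main obstacle is the Brascamp--Lieb step with the convex constraint, where the coercivity \eqref{Eq:LinkLower} is only local. I would handle it by working directly on the convex set $\Bcal_{K'}$ with the Bakry--Émery/$L^2$ Hörmander argument. This uses the fact that Neumann boundary terms have a good sign on convex domains, so no global extension of $V$ is needed.
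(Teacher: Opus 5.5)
Your proof is correct and follows the paper's overall structure. Both arguments use Galerkin truncation on $\Theta_{0,K'}$, then a functional inequality on the convex set $\Bcal_{K'}$ driven by the local coercivity \eqref{Eq:LinkLower}, then dominated convergence with domination coming from convexity of $M$. Where you differ is the key functional inequality.

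\textbf{The paper's route.} It makes the linear change of variables $\upsilon=\Acal_{n,q,K}^{-1/2}(\theta-\theta_0)$. This turns the constant lower bound on $D^2V_{\tau,K}$ into unit curvature in the $H_q$-geometry. It then cites the Bakry--\'Emery criterion on convex domains (Kolesnikov--Milman) to obtain a log-Sobolev inequality, hence a Poincar\'e inequality, and pulls it back. This needs only a constant curvature bound and an off-the-shelf theorem.

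\textbf{Your route.} You use a Brascamp--Lieb inequality under a convex constraint. It gives the stronger intermediate bound weighted by $(D^2V)^{-1}$, and you then apply the pointwise comparison $(D^2V)^{-1}\preceq\Lambda_{n,K'}^{-1}$ on $\Bcal_{K'}$. This avoids the change of variables. It also shows directly that the middle term of \eqref{Eq:CondPoincare} equals $\sum_k(\partial_kf)^2/(nc_0+\lambda_k^{2\alpha+d})$ for every $q$, which the paper only observes later, in Lemma \ref{Lem:CondTransport}. The cost is that you must justify the constrained Brascamp--Lieb inequality yourself.

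\textbf{On that justification.} Your concern about the behaviour of $V$ outside $C$ is unfounded, and no convex extension of $V$ with Hessian bounded below by $\Lambda_{n,K'}$ is needed. A merely convex penalty $W_j$ would not supply such an extension anyway, since its Hessian vanishes near $\partial C$. The fix is as follows.
\begin{itemize}
\item Since $M$ is globally convex, $D^2V\succeq\mathrm{diag}(\lambda_k^{2\alpha+d})$ on all of $\Theta_{0,K'}$.
\item Take, e.g., $W_j=j\,[(\|\theta-\theta_0\|_{H_p}^2-R^2)_+]^3$, which is convex, $C^2$, and zero on $\Bcal_{K'}$.
\item The standard Brascamp--Lieb inequality for $V+W_j$ then bounds the variance by $\int\langle (D^2V)^{-1}\nabla f,\nabla f\rangle\,d\mu_j$, with a bounded continuous integrand.
\item Let $j\to\infty$ by dominated convergence, and only then use $D^2V\succeq\Lambda_{n,K'}$ on $\Bcal_{K'}$.
\end{itemize}
Your alternative Neumann/Hörmander argument on the convex domain would also work.

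\textbf{A minor point in the limit $K'\to\infty$.} You need not discard the sphere $\{\|\theta-\theta_0\|_{\Theta}=R\}$. Since $\|P_{K'}(\theta-\theta_0)\|_{H_p}$ increases monotonically to $\|\theta-\theta_0\|_{H_p}$, the indicators converge for every $\theta$.
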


\begin{proof}
We first establish the corresponding inequality for the conditioned Galerkin measures $\tilde \nu_{\tau,K}$, namely that for every $K\in\N$ and any bounded and smooth function $F:\Theta_{0,K}\to\R$ with bounded derivatives,
\begin{equation}
\begin{split}
\label{Eq:CondPoincareGalerkin}
    Var_{\tilde\nu_{\tau,K}}\big[F(\theta)\big]
    &\ \le\
    \int_{\Bcal_K}
    \big\langle
        \Acal_{n,q}\nabla^{H_q}F(\theta),\nabla^{H_q}F(\theta)
    \big\rangle_{H_q}
    d\tilde\nu_{\tau,K}(\theta)\\
    &\ \le\
    \kappa_{n,q}
    \int_{\Bcal_K}\big\|\nabla^{H_q}F(\theta)\big\|^2_{H_q}\,d\tilde\nu_{\tau,K}(\theta).
\end{split}
\end{equation}
Let $q\in[0,\alpha+d/2]$, $n, K\in\N$, $\tau\in\Theta^*$ and $F$ be fixed. Identifying $\Theta_{0,K}$ with $\R^K$, the negative log-density of the unconditioned measure $\nu_{\tau,K}$ from \eqref{Eq:CondGalPost} with respect to Lebesgue measure equals, up to an additive constant,
\begin{equation}
\label{Eq:PostPot}
    V_{\tau,K}(\theta)
    :=
    n\{M(\theta)-\tau(\theta)\}
    +\tfrac12\sum_{k=1}^K\lambda_k^{2\alpha+d}\theta_k^2,
    \qquad \theta\in\Theta_{0,K}.
\end{equation}
Crucially, $\tau$ enters $V_{\tau,K}$ linearly, and we have, for all $\theta\in\Theta_{0,K}$,
$$
	DV_{\tau,K}(\theta)[h]
	=
	n\bigl\{DM(\theta)[h]-\tau(h)\bigr\}
	+
	\sum_{k=1}^K
	\lambda_k^{2\alpha+d}\theta_kh_k,
	\qquad h\in\Theta_K,
$$
and 
$$
    D^2V_{\tau,K}(\theta)[h,g]
    =
    nD^2M(\theta)[h,g]+\langle h,g\rangle_{H_{\alpha+d/2}},
    \qquad h,g\in\Theta_K,
$$
the latter being independent of $\tau$. For $\theta\in\Theta_{0,K}$, let $ Hess^{H_q}V_{\tau,K}(\theta):\Theta_K\to\Theta_K$ denote the $H_q$-Hessian of $V_{\tau,K}$ at $\theta$, namely the unique self-adjoint operator satisfying
$$
	\left\langle Hess^{H_q}V_{\tau,K}(\theta)h,g\right\rangle_{H_q}
	=D^2V_{\tau,K}(\theta)[h,g],\qquad h,g\in\Theta_K,
$$
given by
$$ 
	Hess^{H_q}V{\tau,K}(\theta)h
	=\sum_{k=1}^K\lambda_k^{-2q}\left[nD^2M(\theta)[h,e_k]+\lambda_k^{2\alpha+d}h_k\right]e_k,
	\qquad h\in\Theta_K.
$$

The set $\Bcal_K = \Bcal\cap\Theta_{0,K}$ is convex and contained in $B^\Theta_{R_0}(\theta_0)$ by \eqref{Eq:BallInclusion}, so the local coercivity condition \eqref{Eq:LinkLower} gives, for $\theta\in\Bcal_K$ and $h\in\Theta_K$,
$$
    D^2V_{\tau,K}(\theta)[h,h]
    \ \ge\
    nc_0\|h\|^2_{H_0}+\|h\|^2_{H_{\alpha+d/2}}
    =
    \sum_{k=1}^K
    \lambda_k^{-2q}\big(nc_0+\lambda_k^{2\alpha+d}\big)\lambda_k^{2q}h_k^2.
$$
This implies that, as self-adjoint operators on $\Theta_K$ equipped with the $H_q$-inner product,
$$
    \Acal_{n,q,K}^{1/2}\,
    Hess^{H_q}V_{\tau,K}(\theta)\,
    \Acal_{n,q,K}^{1/2}
    \ \succeq\ I,
    \qquad \text{for all}\ \theta\in\Bcal_K,
$$
where $\Acal_{n,q,K}$ the restriction of \eqref{Eq:Aq} to $\Theta_K$ (i.e., to the first $K$ coordinates). The linear change of variables $\upsilon=\Acal_{n,q,K}^{-1/2}(\theta-\theta_0)$ transforms $\Bcal_K$ into the convex set $\Dcal_{n,q,K}:=\Acal_{n,q,K}^{-1/2}(\Bcal_K-\theta_0)\subset \Theta_K$. Let $\bar\nu_{\tau,K}$ denote the push-forward of $\tilde\nu_{\tau,K}$ on $\Theta_K$ under this transformation. As the constant Jacobian of the change of variables is absorbed into the normalising constant, we have
$$
    d\bar\nu_{\tau,K}(\upsilon)
    =
    \frac{
        \mathbf 1_{\Dcal_{n,q,K}}(\upsilon)
        e^{-W_{\tau,K}(\upsilon)}
    }{
        \displaystyle
        \int_{\Dcal_{n,q,K}}
        e^{-W_{\tau,K}(\upsilon)}\,d\upsilon
    }
    \,d\upsilon,
    \qquad \upsilon \in \Theta_K,
$$
where $W_{\tau,K}(\upsilon):=V_{\tau,K}\bigl(\theta_0+\Acal_{n,q,K}^{1/2}\upsilon\bigr)$. Since $\Acal_{n,q,K}^{1/2}$ is self-adjoint in the $H_q$-inner product, the chain rule and the preceding curvature bound give
$$
    Hess^{H_q}W_{\tau,K}(\upsilon)
    =
    \Acal_{n,q,K}^{1/2}
    Hess^{H_q}V_{\tau,K}
    \bigl(
        \theta_0+\Acal_{n,q,K}^{1/2}\upsilon
    \bigr)
    \Acal_{n,q,K}^{1/2}
    \succeq I,
    \qquad
    \text{for all}\ \upsilon\in\Dcal_{n,q,K}.
$$
Thus, $\bar\nu_{\tau,K}$ has unit lower curvature on a convex domain. The Bakry--\'Emery criterion on convex domains \citep[][Theorem 2.1]{kolesnikov2016riemannian} therefore yields the log-Sobolev inequality
$$
    Ent_{\bar\nu_{\tau,K}}\left[\bar F^2(\upsilon)\right]
    \leq
    2
    \int_{\Dcal_{n,q,K}}
    \|\nabla^{H_q}\bar F(\upsilon)\|_{H_q}^2
    \,d\bar\nu_{\tau,K}(\upsilon)
$$
holding for every bounded smooth function $\bar F:\Theta_K\to\mathbb R$ with bounded derivatives, where $Ent_{\bar\nu_{\tau,K}}(\bar F^2(\upsilon)):= \int_{\Dcal_{n,q,K}} \bar F^2(\upsilon)\log\left(\bar F^2(\upsilon)/\int_{\Dcal_{n,q,K}}\bar F^2(\upsilon)d\bar\nu_{\tau,K}(\upsilon)\right)\bar\nu_{\tau,K}(\upsilon)$. The usual linearisation step for log-Sobolev inequalities then yields the Poincaré inequality
$$
    Var_{\bar\nu_{\tau,K}}\left[\bar F(\upsilon)\right]
    \leq
    \int_{\Dcal_{n,q,K}}
    \|\nabla^{H_q}\bar F(\upsilon)\|_{H_q}^2
    \,d\bar\nu_{\tau,K}(\upsilon).
$$
In particular, for $\bar F(\upsilon):=F\bigl(\theta_0+\Acal_{n,q,K}^{1/2}\upsilon\bigr)$, with $F$ fixed at the beginning of the proof, we have
$$
    \nabla^{H_q}\bar F(\upsilon)
    =
    \Acal_{n,q,K}^{1/2}
    \nabla^{H_q}F
    \bigl(
        \theta_0+\Acal_{n,q,K}^{1/2}\upsilon
    \bigr),
    \qquad \upsilon\in\Theta_K,
$$
whence applying the inverse change of variable in the $\theta = \theta_0 + \Acal_{n,q,K}^{1/2}\upsilon$ within the  Poincar\'e inequality for $\bar\nu_{\tau,K}$ gives
$$
    Var{\tilde\nu_{\tau,K}}[F(\theta)]
    \leq
    \int_{\Bcal_K}
    \left\langle
        \Acal_{n,q,K}\nabla^{H_q}F(\theta),
        \nabla^{H_q}F(\theta)
    \right\rangle_{H_q}
    \,d\tilde\nu_{\tau,K}(\theta).
$$
Since $\Acal_{n,q,K}$ is the restriction of $\Acal_{n,q}$ to $\Theta_K$, this proves the first inequality in \eqref{Eq:CondPoincareGalerkin}. The second follows upon noting that 
$$
	\kappa_{n,q,K}:=\sup_{\theta\in\Theta_K:\|\theta\|_{H_q}=1}\|\Acal_{n,q,K}\theta\|_{H_q} 
	=
	\max_{k=1,\dots,K}\frac{\lambda_k^{2q}}{nc_0+\lambda_k^{2\alpha+d}}
	\le \kappa_{n,q},
$$
and that in view of the positivity and self-adjointness of $\Acal_{n,q,K}$,
$$
    \big\langle
        \Acal_{n,q,K}\nabla^{H_q}F,
        \nabla^{H_q}F
    \big\rangle_{H_q}
    \le \kappa_{n,q,K}
    \|\nabla^{H_q}F\|_{H_q}^2
    \le  \kappa_{n,q}\|\nabla^{H_q}F\|_{H_q}^2.
$$

There remains to prove \eqref{Eq:CondPoincare} by letting $K\to\infty$. We have $P_{0,K}\theta\to\theta$ in $\Theta = H_p$ for every $\theta\in\Theta$ and, by \eqref{Eq:GalerkinProjection}, $\|P_K(\theta-\theta_0)\|_\Theta\to\|\theta-\theta_0\|_\Theta$. Thus, since $\tau\in\Theta^*$ and $M:\Theta\to\R$ are continuous,
$$
    1_{\{\|P_K(\theta-\theta_0)\|_{\Theta}\le R\}}\,
    e^{n[\tau(P_{0,K}\theta)-M(P_{0,K}\theta)]}
    \to
    1_{\Bcal}(\theta)\,e^{n[\tau(\theta)-M(\theta)]}.
$$
Convexity of $M$ gives
$$
    \tau(P_{0,K}\theta)-M(P_{0,K}\theta)
    \le
    \tau(\theta_0)-M(\theta_0)
    +[\tau-DM(\theta_0)](P_{0,K}\theta-\theta_0).
$$
Whenever the indicator is nonzero, \eqref{Eq:GalerkinProjection} implies that $P_{0,K}\theta\in\Bcal$, and hence the last   is bounded above by $R\|\tau-DM(\theta_0)\|_{\Theta^*}$, whence, for every $\theta\in\Theta^*$,
$$
	1_{\{\|P_K(\theta-\theta_0)\|_{\Theta}\le R\}}\,
    	e^{n[\tau(P_{0,K}\theta)-M(P_{0,K}\theta)]}
	\le e^{
	n\left[
	\tau(\theta_0)-M(\theta_0)
	+
	R\|\tau-DM(\theta_0)\|_{\Theta^*}
	\right] 
	},
$$
which is a finite constant independent of $K$. Thus, by the dominated convergence theorem,
$$
	\int_{\Theta}1_{\{\|P_K(\theta-\theta_0)\|_{\Theta}\le R\}}\,
    	e^{n[\tau(P_{0,K}\theta)-M(P_{0,K}\theta)]}d\Pi(\theta)
	\to \int_\Theta 1_{\Bcal}(\theta)\,e^{n[\tau(\theta)-M(\theta)]} d\Pi(\theta).
$$
Let now $F:\Theta\to\R$ be any regular cylindrical function $J\in\N$ coordinates. The preceding convergence and another application of the dominated convergence theorem yields
\begin{align*}
	\int_{\Bcal_K} F(\theta) d\tilde\nu_{\tau,K}(\theta) 
	&=
	\frac{
        \displaystyle
        \int_\Theta F(P_{0,K}\theta)1_{\{\|P_K(\theta-\theta_0)\|_{\Theta}\le R\}}
        e^{n[\tau(P_{0,K}\theta)-M(P_{0,K}\theta)]}d\Pi(\theta)
    	}{
        \int_{\Theta}1_{\{\|P_K(\theta-\theta_0)\|_{\Theta}\le R\}}
        e^{n[\tau(P_{0,K}\theta)-M(P_{0,K}\theta)]}d\Pi(\theta)
    	}\\
	&\to 
	\frac{\int_\Theta F(\theta) 1_{\Bcal}(\theta)\,e^{n[\tau(\theta)-M(\theta)]} d\Pi(\theta)}
	{\int_\Theta 1_{\Bcal}(\theta)\,e^{n[\tau(\theta)-M(\theta)]} d\Pi(\theta)}
	=\int_{\Bcal} F(\theta) d\tilde\nu_{\tau}(\theta).
\end{align*}
Apply this conclusion to $F$, $F^2$, $\left\langle\Acal_{n,q,K}\nabla^{H_q}F,\nabla^{H_q}F\right\rangle_{H_q}$ (which equals $\left\langle\Acal_{n,q}\nabla^{H_q}F,\nabla^{H_q}F\right\rangle_{H_q}$ when $K>J$) and $\|\nabla^{H_q}F\|_{H_q}^2$, which are all regular cylindrical and depend only on the first $J$  coordinates shows that $\Var_{\tilde\nu_{\tau,K}}[F(\theta)]\to\Var_{\tilde\nu_\tau}[F(\theta)]$ as well as
\begin{align*}
    \int_{\Bcal_K}
    \left\langle
        \Acal_{n,q,K}\nabla^{H_q}F(\theta),
        \nabla^{H_q}F(\theta)
    \right\rangle_{H_q}
    \,d\tilde\nu_{\tau,K}(\theta)
    &\to
    \int_{\Bcal}
    \left\langle
        \Acal_{n,q}\nabla^{H_q}F(\theta),
        \nabla^{H_q}F(\theta)
    \right\rangle_{H_q}
    \,d\tilde\nu_\tau(\theta),
\end{align*}
and
$$
    \int_{\Bcal_K}
    \|\nabla^{H_q}F(\theta)\|_{H_q}^2
    \,d\tilde\nu_{\tau,K}
    \longrightarrow
    \int_{\Bcal}
    \|\nabla^{H_q}F(\theta)\|_{H_q}^2
    \,d\tilde\nu_\tau.
$$
Taking $K\to\infty$ in \eqref{Eq:CondPoincareGalerkin} therefore proves \eqref{Eq:CondPoincare}.
\end{proof}

The final key lemma combines the above Poincaré inequality for conditioned posteriors with an interpolation argument to estimate the Wasserstein distance between $nu_{\hat T_n}(\cdot)$ and $\tilde \nu_{T_{\theta_0}}$ through a  preconditioned quadratic measure of the empirical fluctuation $\hat D_n = \hat T_n - T_{\theta_0}$.

\begin{lemma}
\label{Lem:CondTransport}
Under the assumptions of Proposition \ref{Prop:Stoch}, almost surely,
\begin{equation}
\label{Eq:CondTransport}
    \Wcal_2^{\Pcal_2(H_s)}
    \big(\tilde\nu_{\hat T_n},\tilde\nu_{T_{\theta_0}}\big)
    \ \le\
    n\sqrt{
        \kappa_{n,s}\;
        \hat D_n\big(\Ccal_n\hat D_n\big)
    },
\end{equation}
where $\Ccal_n:\Theta^*\to\Theta$ is the linear operator from \eqref{Eq:Cn} and $\kappa_{n,s}>0$ is defined as in \eqref{Eq:CondPoincareRate}.
\end{lemma}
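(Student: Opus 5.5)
We connect $T_{\theta_0}$ and $\hat T_n$ through the linear path $\tau_t:=T_{\theta_0}+t\hat D_n$, $t\in[0,1]$, and bound the length in $\Pcal_2(H_s)$ of the curve $t\mapsto\tilde\nu_{\tau_t}$ via the Benamou--Brenier formulation. The argument is first carried out on the Galerkin measures $\tilde\nu_{\tau_t,K}$ and then passed to the limit $K\to\infty$ exactly as in the proof of Lemma \ref{Lem:CondPoincare}, using weak lower semicontinuity of $\Wcal_2$.

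\textbf{Step 1 (the continuity equation).} By \eqref{Eq:LikelihoodIdentity}, $\tau$ enters the density of $\tilde\nu_{\tau,K}$ linearly. Hence
$$
\partial_t\, d\tilde\nu_{\tau_t,K}=n\big(g-E_{\tilde\nu_{\tau_t,K}}[g]\big)\,d\tilde\nu_{\tau_t,K},\qquad g(\theta):=\hat D_n(\theta-\theta_0).
$$
We solve the weighted Neumann problem $-\mathrm{div}_{H_s}(\tilde\nu\,\nabla^{H_s}\phi)=n(g-E g)\tilde\nu$ on the convex set $\Bcal_K$. This is possible because the Poincaré inequality of Lemma \ref{Lem:CondPoincare} gives coercivity. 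The velocity field $v_t=\nabla^{H_s}\phi_t$ then satisfies the continuity equation, so that
$$
\Wcal_2(\tilde\nu_{\tau_0,K},\tilde\nu_{\tau_1,K})\le\int_0^1\|v_t\|_{L^2(\tilde\nu_{\tau_t,K};H_s)}\,dt.
$$

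\textbf{Step 2 (bounding the kinetic energy).} We use the dual characterisation of the $\dot H^{-1}(\tilde\nu)$ norm:
$$
\|v_t\|^2_{L^2}=\sup_F\frac{\big(n\,\mathrm{Cov}_{\tilde\nu}(g,F)\big)^2}{\int\|\nabla^{H_s}F\|_{H_s}^2d\tilde\nu}.
$$
This is the mixed-geometry step. The covariance is bounded by Cauchy--Schwarz as $\mathrm{Cov}(g,F)^2\le \Var(g)\Var(F)$. For $\Var(F)$ we apply \eqref{Eq:CondPoincare} with $q=s$, giving $\Var(F)\le\kappa_{n,s}\int\|\nabla^{H_s}F\|^2$. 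For $\Var(g)$ we apply the first, operator-weighted, form of \eqref{Eq:CondPoincare}, with $q=0$ say. Since $g$ is linear, $\nabla^{H_0}g=\sum_k\hat D_n(e_k)e_k$ is constant, and therefore
$$
\langle\Acal_{n,0}\nabla g,\nabla g\rangle_{H_0}=\sum_k\frac{\hat D_n(e_k)^2}{nc_0+\lambda_k^{2\alpha+d}}=\hat D_n(\Ccal_n\hat D_n).
$$
Combining, $\|v_t\|^2\le n^2\kappa_{n,s}\,\hat D_n(\Ccal_n\hat D_n)$ uniformly in $t$ and $K$. Integrating over $t\in[0,1]$ gives \eqref{Eq:CondTransport} at the Galerkin level.

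\textbf{Step 3 (approximation).} The function $g$ is linear and unbounded, hence not regular cylindrical. We therefore truncate it to its first $J$ coordinates and use a smooth cutoff. On the bounded set $\Bcal_K$ the convergence is dominated, so the Poincaré bound extends to $g$.

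We then pass $K\to\infty$. This uses the weak convergence $\tilde\nu_{\tau,K}\to\tilde\nu_\tau$ shown in the proof of Lemma \ref{Lem:CondPoincare}, together with uniform $H_s$ second moments on $\Bcal$, which gives convergence in $\Pcal_2$. Note that $\Bcal$ is bounded in $H_p$, but the $H_s$ moments still need care when $s>p$: there we would instead use that the conditioned measures have moments controlled by the prior, and invoke lower semicontinuity of $\Wcal_2$ under weak convergence.

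\textbf{Main obstacle.} I expect the main obstacle to be making Step 1 rigorous on $\Bcal_K$: existence and regularity of the Neumann solution, and admissibility of the resulting pair $(\tilde\nu,v)$ in Benamou--Brenier on a domain with boundary. To sidestep this, I would first try to replace the PDE with the purely dual bound $\Wcal_2\le\int_0^1\|\partial_t\tilde\nu_t\|_{\dot H^{-1}(\tilde\nu_t)}dt$ (cf.~\citet{dolera2024strong}), which only requires the covariance estimate of Step 2.
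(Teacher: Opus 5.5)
Your proposal is correct and follows essentially the same route as the paper. The shared steps are: the linear path $\tau_t=T_{\theta_0}+t\hat D_n$ at the Galerkin level; the covariance formula for $\frac{d}{dt}\int\phi\,d\tilde\nu_{\tau_t,K}$, bounded via Cauchy--Schwarz and the two forms of the conditioned Poincar\'e inequality; a gradient velocity field from the weak weighted Neumann problem, which is exactly the paper's Riesz-representation step in $\dot H^1(\Bcal_K)$; a smooth cutoff for the unbounded linear functional; and a limit $K\to\infty$. Your choice $q=0$ for the linear functional is immaterial, since the weighted form equals $\sum_k\hat D_n(e_k)^2/(nc_0+\lambda_k^{2\alpha+d})$ for every $q$.

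The only minor difference is in the limit $K\to\infty$. You use lower semicontinuity of $\Wcal_2$ under narrow convergence, which suffices on its own; note that uniform second-moment bounds alone would not give $\Pcal_2$-convergence. The paper instead proves $\Wcal_2$-convergence of each endpoint via dominated convergence of the $H_s$ second moments, and then applies the triangle inequality.
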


\begin{proof}
Similarly to the proof of Lemma \ref{Lem:CondPoincare}, we first establish the finite-dimensional analogue of the transport estimate \eqref{Eq:CondTransport} for the conditioned Galerkin posteriors $\tilde\nu_{\hat T_n,K}$ and $\tilde\nu_{T_{\theta_0},K}$, defined as after \eqref{Eq:CondGalPost} with $\tau = \hat T_n$ and $\tau = T_{\theta_0}$, respectively. The proof is based on the dynamic formulation of the Wasserstein distance \citep{benamou2000computational}. For any $t\in[0,1]$, define the $\Theta^*$-valued random element $\hat\tau_t:=T_{\theta_0}+t \hat D_n$, and let $\tilde\nu_{\hat\tau_t,K}$ be the associated conditioned Galerkin posterior. Then $\{\tilde\nu_{\hat\tau_t,K}\}_{t\in[0,1]}$ defines a curve of Borel probability measures in $P_2(H_s)$ that are supported on the common convex set $\Bcal_K$, and joins $\tilde \nu_{\hat\tau_0,K} = \tilde\nu_{T_{\theta_0},K}$ to $\tilde \nu_{\hat\tau_1,K} = \tilde\nu_{\hat T_n,K}$. An admissible velocity field for this curve is a measurable vector field $v_{t,K}:\Bcal_K\to\Theta_K$, $t\in[0,1]$ such that
$$
    \int_0^1
    \left(
        \int_{\Bcal_K}
        \|v_{t,K}(\theta)\|_{H_s}^2
        \,d\tilde\nu_{\hat\tau_t,K}(\theta)
    \right)^{1/2}
    dt
    <\infty,
$$
and satisfying the weak continuity equation
\begin{equation}
\label{Eq:ContEq}
    \frac{d}{dt}
    \int_{\Bcal_K}
    \phi(\theta)\,d\tilde\nu_{\hat\tau_t,K}(\theta)
    =
    \int_{\Bcal_K}
    \left\langle
        \nabla^{H_s}\phi(\theta),
        v_{t,K}(\theta)
    \right\rangle_{H_s}
    \,d\tilde\nu_{\hat\tau_t,K}(\theta)
\end{equation}
for almost every $t\in(0,1)$ and every bounded smooth test function $\phi:\Theta_{0,K}\to\R$ of the form $\phi(\theta)=f(\theta_1,\ldots,\theta_K)$, $\theta=(I-P_K)\theta_0+\sum_{k=1}^K\theta_ke_k\in\Theta_{0,K}$, for some bounded and smooth $f:\mathbb R^K\to\R$ with bounded derivatives of all orders. The dynamic formulation then gives \citep[][Theorem~8.3.1]{ambrosio2008gradient}
\begin{equation}
\label{Eq:DynamicWassBound}
    \Wcal_2^{\Pcal_2(H_s)}
    \bigl(
        \tilde\nu_{\hat T_n,K},
        \tilde\nu_{T_{\theta_0},K}
    \bigr)
    \le
    \int_0^1
    \left(
        \int_{\Bcal_K}
        \|v_{t,K}(\theta)\|_{H_s}^2
        \,d\tilde\nu_{\hat\tau_t,K}(\theta)
    \right)^{1/2}
    dt.
\end{equation}

We move to the construction of an admissible velocity field satisfying the continuity equation \eqref{Eq:ContEq} and obtain a uniform bound for the integrated norm appearing in \eqref{Eq:DynamicWassBound}. Using that $\frac{d}{dt}\hat\tau_t(\theta)
=\hat D_n(\theta)$ for all $t\in[0,1]$ and $\theta\in\Theta$, differentiating under the integral sign the left hand side of \eqref{Eq:ContEq} gives
\begin{align*}
    \frac{d}{dt}
    \int_{\Bcal_K}
    \phi(\theta)\,d\tilde\nu_{\hat\tau_t,K}(\theta)
    &=
	n\int_{\Bcal_K}
	\phi(\theta)\hat D_n(\theta)
	\,d\tilde\nu_{\hat\tau_t,K}(\theta)
	\\
	&\quad
	-
	n\left(
	\int_{\Bcal_K}
	\phi(\theta)\,d\tilde\nu_{\hat\tau_t,K}(\theta)
	\right)
	\left(
	\int_{\Bcal_K}
	\hat D_n(\theta)\,d\tilde\nu_{\hat\tau_t,K}(\theta)
	\right)
	\\
	&=
    	n\,Cov_{\tilde\nu_{\hat\tau_t,K}}
    	\bigl(\phi(\theta),\hat D_n(\theta)\bigr).
\end{align*}
Consequently, by Cauchy--Schwarz inequality
\begin{equation}
\label{Eq:CovarianceBound}
    \left|
        \frac{d}{dt}
        \int_{\Bcal_K}
        \phi(\theta)\,d\tilde\nu_{\hat\tau_t,K}(\theta)
    \right|
    \le
    n
    \sqrt{
        Var{\tilde\nu_{\hat\tau_t,K}}
        [\phi(\theta)]
    }
    \sqrt{
        Var{\tilde\nu_{\hat\tau_t,K}}
        [\hat D_n(\theta)]
    }.
\end{equation}
We next estimate the two variances using the Poincar\'e inequality from Lemma \ref{Lem:CondPoincare}. Applying the second inequality in \eqref{Eq:CondPoincareGalerkin} with $q=s$, we directly obtain
$$
	Var_{\tilde\nu_{\hat\tau_t,K}}
        [\phi(\theta)]
        \le \kappa_{n,s}
        \int_{\Bcal_K}
        \|\nabla^{H_s}\phi(\theta)\|_{H_s}^2
        \,d\tilde\nu_{\hat\tau_t,K}(\theta).
$$
For the second variance in \eqref{Eq:CovarianceBound}, we apply the first inequality in \eqref{Eq:CondPoincareGalerkin} with $q=s$ to the restriction of the linear functional $\hat D_n$ to $\Theta_{0,K}$. Its $H_s$-gradient is the constant vector field
$$
    \nabla^{H_s}\hat D_n(\theta)
    =
    \sum_{k=1}^K
    \lambda_k^{-2s}\hat D_n(e_k)e_k.
$$
Consequently,
\begin{align*}
    \left\langle
        \Acal_{n,s,K}\nabla^{H_s}\hat D_n(\theta),
        \nabla^{H_s}\hat D_n(\theta)
    \right\rangle_{H_s}
    &=
    \sum_{k=1}^K
    \lambda_k^{2s}
    \frac{\hat D_n(e_k)}
    {nc_0+\lambda_k^{2\alpha+d}}
    \lambda_k^{-2s}\hat D_n(e_k)
    =
    \sum_{k=1}^K
    \frac{\hat D_n(e_k)^2}
    {nc_0+\lambda_k^{2\alpha+d}},
\end{align*}
which we note is independent of $s$ (and $\theta$). Since the preceding integrand is constant in $\theta$, the first inequality in \eqref{Eq:CondPoincareGalerkin} yields
\begin{equation}
\label{Eq:VarChiK}
    Var_{\tilde\nu_{\hat\tau_t,K}}
    \bigl[\hat D_n(\theta)\bigr]
    \le
    \sum_{k=1}^K
    \frac{\hat D_n(e_k)^2}
    {nc_0+\lambda_k^{2\alpha+d}}.
\end{equation}
Strictly speaking, \eqref{Eq:CondPoincareGalerkin} is stated for bounded functions, whereas $ \hat D_n$ is not. This is immaterial here, since $\Bcal_K$ is bounded, $| \hat D_n(\theta)|\le\| \hat D_n\|_{\Theta^*}(\|\theta_0\|_\Theta+R)$ on $\theta\in\Bcal_K$, and it suffices to apply the first inequality in \eqref{Eq:CondPoincareGalerkin} to $H\circ \hat D_n$ for a smooth bounded functions $H:\R\to\R$ with bounded derivatives of all orders such that $H(z)=z$ for all $|z|\le 1 + \| \hat D_n\|_{\Theta^*}(\|\theta_0\|_\Theta+R)$. Then $H\circ \hat D_n= \hat D_n$ and $\nabla^{H_s}(H\circ \hat D_n)=\nabla^{H_s} \hat D_n$ on $\Bcal_K$, which carries all the mass of $\tilde\nu_{\hat\tau_t,K}$. Combining the obtained bounds with \eqref{Eq:CovarianceBound} yields 
$$
       \left|
        \frac{d}{dt}
        \int_{\Bcal_K}
        \phi(\theta)\,d\tilde\nu_{\hat\tau_t,K}(\theta)
    \right|
    \le
    n\left(
        \sum_{k=1}^K
        \frac{ \hat D_n(e_k)^2}{nc_0+\lambda_k^{2\alpha+d}}
    \right)^{1/2}
    \left(
    \kappa_{n,s}
        \int_{\Bcal_K}
        \|\nabla^{H_s}\phi\|_{H_s}^2
        \,d\tilde\nu_{\hat\tau_t,K}
    \right)^{1/2}.
$$

We now use the preceding bound to construct the admissible velocity field $v_{t,K}$ for \eqref{Eq:DynamicWassBound}. By identifying the affine space $\Theta_{0,K}$ with $\mathbb R^K$ as done previously, the set $\Bcal_K=\left\{\theta\in\Theta_{0,K}:\|\theta-\theta_0\|_{H_p}\le R\right\}$ is a bounded ellipsoid in $\mathbb R^K$, with smooth boundary. Let $H^1(\Bcal_K)$ denote the standard Hilbert-Sobolev space of square integrable functions with square integrable first-order weak derivatives, equipped with the usual inner product and norm, and denote by
$$
    \dot H^1(\Bcal_K)
    :=
    \left\{
        \varphi\in H^1(\Bcal_K):
        \int_{\Bcal_K}\varphi(\theta)\,d\theta=0
    \right\}.
$$
For each fixed $K$ and $t\in[0,1]$, the density of $\tilde\nu_{\hat\tau_t,K}$ with respect to the Lebesgue measure on $\Bcal_K$ is proportional to $\exp\{-V_{\hat \tau_t,K}(\theta)\}$, $\theta\in\Bcal_K$, where $V_{\hat \tau_t,K}$ is the potential from \eqref{Eq:PostPot} with $\tau = \hat\tau_t$. Since $[0,1]\times \Bcal_K$ is compact, such density is bounded and bounded away from zero by constants that do not depend on $t$. Furthermore, since for all differentiable $\varphi:\Bcal_K\to\R$
$$
	\|\nabla^{H_s}\varphi(\theta)\|_{H_s}^2
	=
	\sum_{k=1}^K
	\lambda_k^{-2s}
	|\partial_k\varphi(\theta)|^2,
$$
whence $\lambda_K^{-2s}\|\nabla \varphi(\theta)\|_{\mathbb R^K}^2\le\|\nabla^{H_s}\varphi(\theta)\|_{H_s}^2\le\lambda_1^{-2s}\|\nabla \varphi(\theta)\|_{\mathbb R^K}^2$, for all $\theta\in\Theta$, the norms
$$
    \|\varphi\|_{t,K}
    :=
    \left(
        \int_{\Bcal_K}
        \|\nabla^{H_s}\varphi(\theta)\|_{H_s}^2
        \,d\tilde\nu_{\hat\tau_t,K}(\theta)
    \right)^{1/2},
    \qquad t\in[0,1],
$$
are uniformly equivalent to the usual gradient norm on $\dot H^1(\Bcal_K)$. For smooth $\phi\in C^\infty(\overline{\Bcal_K})\cap\dot H^1(\Bcal_K)$, define
$$
    L_{t,K}(\phi)
    :=
    \frac{d}{dt}
    \int_{\Bcal_K}
    \phi(\theta)\,d\tilde\nu_{\hat\tau_t,K}(\theta).
$$
The preceding covariance bound shows that $L_{t,K}$ extends to a bounded linear functional on $\dot H^1(\Bcal_K)$ satisfying
\begin{equation}
\label{Eq:BoundedL}
    |L_{t,K}(\phi)|
    \le
    n
    \left(
        \kappa_{n,s}
        \sum_{k=1}^K
        \frac{\hat D_n(e_k)^2}
        {nc_0+\lambda_k^{2\alpha+d}}
    \right)^{1/2}
    \|\phi\|_{t,K},
\end{equation}
and the Riesz representation theorem therefore gives a unique $u_{t,K}\in\dot H^1(\Bcal_K)$ such that
\begin{equation}
\label{Eq:VarCont}
    L_{t,K}(\phi)
    =
    \int_{\Bcal_K}
    \left\langle
        \nabla^{H_s}\phi(\theta),
        \nabla^{H_s}u_{t,K}(\theta)
    \right\rangle_{H_s}
    \,d\tilde\nu_{\hat\tau_t,K}(\theta),
    \qquad \text{for all}\
    \phi\in\dot H^1(\Bcal_K).
\end{equation}
Since $L_{t,K}$ annihilates constants and gradients are unchanged by adding constants, this identity also holds for every bounded smooth test function on $\Theta_{0,K}$. Moreover, the continuous dependence on $t$ of the density of $\tilde\nu_{\hat\tau_t,K}$ and of the functional $L_{t,K}$ readily imply that the map $t\in[0,1]\mapsto u_{t,K}$ is continuous. Hence, $v_{t,K} := \nabla^{H_s} u_{t,k}$ may be chosen jointly measurable in $t$ and $\theta$. In view of \eqref{Eq:VarCont}, $v_{t,K}$ solves the continuity equation \eqref{Eq:ContEq}, and therefore is an admissible velocity field for the curve $\{\tilde\nu_{\hat\tau_t,K}\}_{t\in[0,1]}$. Finally, taking $\phi=u_{t,K}$ in \eqref{Eq:BoundedL} and \eqref{Eq:VarCont} gives
$$
    \int_{\Bcal_K}
    \|\nabla^{H_s}u_{t,K}(\theta)\|_{H_s}^2
    \,d\tilde\nu_{\hat\tau_t,K}(\theta)
    \le
    n^2\kappa_{n,s}
    \sum_{k=1}^K
    \frac{\hat D_n(e_k)^2}
    {nc_0+\lambda_k^{2\alpha+d}},
$$
and, via \eqref{Eq:DynamicWassBound},
\begin{align}
\label{Eq:CondTranspK}
    \Wcal_2^{\Pcal_2(H_s)}
    \bigl(
        \tilde\nu_{\hat T_n,K},
        \tilde\nu_{T_{\theta_0},K}
    \bigr)
        &\le
    n
    \left(
        \kappa_{n,s}
        \sum_{k=1}^K
        \frac{\hat D_n(e_k)^2}
        {nc_0+\lambda_k^{2\alpha+d}}
    \right)^{1/2}.
\end{align}

There remains to lift the result to $\Wcal_2^{\Pcal_2(H_s)}\bigl(\tilde\nu_{\hat T_n},\tilde\nu_{T_{\theta_0}}\bigr)$ by letting $K\to\infty$. The argument used at the end of the proof of Lemma \ref{Lem:CondPoincare} applies without changes to give $\tilde\nu_{\tau,K}\to\tilde\nu_\tau$ weakly in $H_s$. Thus, to deduce convergence in Wasserstein distance, we only have to check convergence of the $H_s$-second moments \citep[][Chapter~7]{ambrosio2008gradient}. For all $\theta\in H_s$, $P_{0,K}\theta\to\theta$ in $H_s$. Further, by \eqref{Eq:GalerkinProjection}, $\|P_{0,K}\theta-\theta_0\|_{H_s}\le\|\theta-\theta_0\|_{H_s}$. Since the latter is $\Pi$-integrable as $s<\alpha$, the same dominated-convergence argument yields as required
$$
    \int_{\Bcal_K}
    \|\theta-\theta_0\|_{H_s}^2
    \,d\tilde\nu_{\tau,K}(\theta)
    \longrightarrow
    \int_{\Bcal}
    \|\theta-\theta_0\|_{H_s}^2
    \,d\tilde\nu_\tau(\theta).
$$
Conclude that $\Wcal_2^{\Pcal_2(H_s)}\bigl(\tilde\nu_{\tau,K},\tilde\nu_\tau\bigr)\to 0$ for all $\tau\in\Theta^*$. Applying this with $\hat\tau_0=T_{\theta_0}$ and $\hat\tau_1=\hat T_n$ and combining \eqref{Eq:CondTranspK} with the triangle inequality for the Wasserstein distance gives, letting $K\to\infty$,
\begin{align*}
    \Wcal_2^{\Pcal_2(H_s)}
    \bigl(
        \tilde\nu_{\hat T_n},
        \tilde\nu_{T_{\theta_0}}
    \bigr)
    &\le
    n
    \left(
        \kappa_{n,s}
        \sum_{k=1}^\infty
        \frac{\hat D_n(e_k)^2}
        {nc_0+\lambda_k^{2\alpha+d}}
    \right)^{1/2}
    =
    n\sqrt{
        \kappa_{n,s}\,
        \hat D_n(\Ccal_n\hat D_n)
    },
\end{align*}
concluding the proof.
\end{proof}

\begin{remark}[Global transport estimate]
The same argument, without conditioning, justifies the global transport estimate used on $\Ecal_n^c$ in the proof of Proposition \ref{Prop:Stoch}. Indeed, global convexity of $M$ gives the two Poincar\'e inequalities stated there, first on the Galerkin spaces and then on $\Theta$ by the limiting argument of Lemma \ref{Lem:CondPoincare}. Although the linear functional $\chi:=\tau-\tau'$ is now unbounded, the Galerkin measures $\nu_{\tau,K}$ have Gaussian tails by global strong log-concavity. Hence, applying the Poincar\'e inequality to smooth truncations with increasing resolution extends it to $\chi$. Repeating the preceding finite-dimensional transport argument therefore gives
$$
    \Wcal_2^{\Pcal_2(H_s)}
    \bigl(\nu_{\tau,K},\nu_{\tau',K}\bigr)
    \le
    n\sqrt{\bar\kappa_s}
    \left(
        \sum_{k=1}^K
        \lambda_k^{-2\alpha-d}
        (\tau-\tau')(e_k)^2
    \right)^{1/2}.
$$
Finally, the pullback argument and dominated convergence used above, now combining convexity of $M$ with Fernique's theorem, shows that $\Wcal_2^{\Pcal_2(H_s)}(\nu_{\tau,K},\nu_\tau)\to 0$ for every $\tau\in\Theta^*$. Passing to the limit at both endpoints yields
$$
    \Wcal_2^{\Pcal_2(H_s)}
    \bigl(\nu_\tau,\nu_{\tau'}\bigr)
    \le
    n\sqrt{\bar\kappa_s}\,
    \|\tau-\tau'\|_{H_{-\alpha-d/2}},
$$
which is precisely the employed crude global bound.
\end{remark}

%
%
%
%
%

\bibliography{References}

@article{hardle1989investigating,
  title={Investigating smooth multiple regression by the method of average derivatives},
  author={H{\"a}rdle, Wolfgang and Stoker, Thomas M.},
  journal={Journal of the American Statistical Association},
  volume={84},
  number={408},
  pages={986--995},
  year={1989},
  doi={10.1080/01621459.1989.10478863}
}

@article{genovese2016non,
  title={Non-parametric inference for density modes},
  author={Genovese, Christopher R and Perone-Pacifico, Marco and Verdinelli, Isabella and Wasserman, Larry},
  journal={Journal of the Royal Statistical Society Series B: Statistical Methodology},
  volume={78},
  number={1},
  pages={99--126},
  year={2016},
}

@article{comaniciu2002mean,
  title={Mean shift: A robust approach toward feature space analysis},
  author={Comaniciu, Dorin and Meer, Peter},
  journal={IEEE Transactions on Pattern Analysis and Machine Intelligence},
  volume={24},
  number={5},
  pages={603--619},
  year={2002},
  doi={10.1109/34.1000236}
}

@inproceedings{song2019generative,
  title={Generative modeling by estimating gradients of the data distribution},
  author={Song, Yang and Ermon, Stefano},
  booktitle={Advances in Neural Information Processing Systems},
  volume={32},
  year={2019}
}

@article{singh1977improvement,
  title={Improvement on some known nonparametric uniformly consistent estimators of derivatives of a density},
  author={Singh, Radhey S},
  journal={The Annals of Statistics},
  volume={5},
  number={2},
  pages={394--399},
  year={1977},
  doi={10.1214/aos/1176343805}
}

@article{bhattacharya1967estimation,
  title={Estimation of a probability density function and its derivatives},
  author={Bhattacharya, P. K.},
  journal={Sankhy\={a}: The Indian Journal of Statistics, Series A},
  volume={29},
  number={4},
  pages={373--382},
  year={1967}
}

@article{stone1982optimal,
  title={Optimal global rates of convergence for nonparametric regression},
  author={Stone, Charles J.},
  journal={The Annals of Statistics},
  volume={10},
  number={4},
  pages={1040--1053},
  year={1982},
  doi={10.1214/aos/1176345969}
}

@article{rousseau2011asymptotic,
  title={Asymptotic behaviour of the posterior distribution in overfitted mixture models},
  author={Rousseau, Judith and Mengersen, Kerrie},
  journal={Journal of the Royal Statistical Society Series B: Statistical Methodology},
  volume={73},
  number={5},
  pages={689--710},
  year={2011},
}

@article{gine2011rates,
  title={Rates of contraction for posterior distributions in ${L}^r$-metrics, $1\le r\le\infty$},
  author={Gin{\'e}, Evarist and Nickl, Richard},
  journal={The Annals of Statistics},
  volume={39},
  number={6},
  pages={2883--2911},
  year={2011},
}

@article{doob1949application,
  title={Application of the theory of martingales},
  author={Doob, Joseph L.},
  journal={Le calcul des probabilites et ses applications},
  pages={23--27},
  year={1949},
}

@book{vaart2000asymptotic,
  title={Asymptotic statistics},
  author={Van der Vaart, Aad W},
  volume={3},
  year={2000},
  publisher={Cambridge university press}
}

@article{pinelis1994optimum,
  author  = {Pinelis, Iosif},
  title   = {Optimum Bounds for the Distributions of Martingales
             in {B}anach Spaces},
  journal = {The Annals of Probability},
  year    = {1994},
  volume  = {22},
  number  = {4},
  pages   = {1679--1706},
  doi     = {10.1214/aop/1176988477}
}

@book{bogachev1998gaussian,
  author    = {Bogachev, Vladimir I.},
  title     = {Gaussian Measures},
  publisher = {American Mathematical Society},
  series    = {Mathematical Surveys and Monographs},
  volume    = {62},
  year      = {1998}
}

@article{benamou2000computational,
  title={A computational fluid mechanics solution to the Monge-Kantorovich mass transfer problem},
  author={Benamou, Jean-David and Brenier, Yann},
  journal={Numerische Mathematik},
  volume={84},
  number={3},
  pages={375--393},
  year={2000},
}

@article{kolesnikov2016riemannian,
  title={Riemannian metrics on convex sets with applications to {P}oincar\'e and log-{S}obolev inequalities},
  author={Kolesnikov, Alexander V. and Milman, Emanuel},
  journal={Calculus of Variations and Partial Differential Equations},
  volume={55},
  number={4},
  pages={77},
  year={2016},
  doi={10.1007/s00526-016-1018-3}
}

@article{bakry2008simple,
  title={A simple proof of the {P}oincar\'e inequality for a large class of probability measures including the log-concave case},
  author={Bakry, Dominique and Barthe, Franck and Cattiaux, Patrick and Guillin, Arnaud},
  journal={Electronic Communications in Probability},
  volume={13},
  pages={60--66},
  year={2008},
  doi={10.1214/ECP.v13-1352}
}

@book{ledoux1991probability,
  title={Probability in {B}anach Spaces: isoperimetry and processes},
  author={Ledoux, Michel and Talagrand, Michel},
  volume={23},
  year={1991},
  publisher={Springer}
}

@book{ambrosio2008gradient,
  title={Gradient flows: in metric spaces and in the space of probability measures},
  author={Ambrosio, Luigi and Gigli, Nicola and Savar{\'e}, Giuseppe},
  publisher={Springer Science Business \& Media},
  year={2008}
}

@article{dolera2024besov,
  title={On strong posterior contraction rates for {B}esov--{L}aplace priors in the white noise model},
  author={Dolera, Emanuele and Favaro, Stefano and Giordano, Matteo},
  journal={arXiv preprint arXiv:2411.06981},
  year={2024},
  eprint={2411.06981},
  archivePrefix={arXiv}
}

@article{dolera2020uniform,
  title={On uniform continuity of posterior distributions},
  author={Dolera, Emanuele and Mainini, Edoardo},
  journal={Statistics \& Probability Letters},
  volume={157},
  pages={108627},
  year={2020}
}

@article{dolera2023lipschitz,
  title={{L}ipschitz continuity of probability kernels in the optimal transport framework},
  author={Dolera, Emanuele and Mainini, Edoardo},
  journal={Annales de l'Institut Henri Poincar{\'e}, Probabilit{\'e}s et Statistiques},
  volume={59},
  number={4},
  pages={1778--1812},
  year={2023},
  doi={10.1214/23-AIHP1389}
}

@book{taylor2011partial2,
  title={Partial differential equations II. Qualitative Studies of Linear Equations},
  author={Taylor, Michael E},
  year={2011},
  publisher={Springer, New York}
}

@article{gretton2012kernel,
  title={A kernel two-sample test},
  author={Gretton, Arthur and Borgwardt, Karsten M and Rasch, Malte J and Sch{\"o}lkopf, Bernhard and Smola, Alexander},
  journal={Journal of Machine Learning Research},
  volume={13},
  number={1},
  pages={723--773},
  year={2012},
}

@article{castillo2013nonparametric,
  title={Nonparametric {B}ernstein--von {M}ises theorem in {G}aussian white noise},
  author={Castillo, I. and Nickl, R.},
  journal={The Annals of Statistics},
  volume={41},
  number={4},
  pages={1999--2028},
  year={2013}
}

@article{vaart2008rates,
  title={Rates of contraction of posterior distributions based on {G}aussian process priors},
  author={van der Vaart, A.~W. and van Zanten, J.~H.},
  journal={The Annals of Statistics},
  volume={36},
  number={3},
  pages={1435--1463},
  year={2008},
}

@article{natterer1984error,
  title={Error bounds for {T}ikhonov regularization in {H}ilbert scales},
  author={Natterer, Frank},
  journal={Applicable Analysis},
  volume={18},
  number={1--2},
  pages={29--37},
  year={1984},
}

@book{horn2012matrix,
  title={Matrix analysis},
  author={Horn, Roger A and Johnson, Charles R},
  year={2012},
  publisher={Cambridge university press}
}

@book{ghosal2017fundamentals,
  title={Fundamentals of nonparametric {B}ayesian inference},
  author={Ghosal, Subhashis and Van der Vaart, Aad W},
  volume={44},
  year={2017},
  publisher={Cambridge University Press}
}

@book{wong2001asymptotic,
  title={Asymptotic approximations of integrals},
  author={Wong, Roderick},
  year={2001},
  publisher={SIAM}
}

@article{dolera2024strong,
  title={Strong posterior contraction rates via {W}asserstein dynamics},
  author={Dolera, Emanuele and Favaro, Stefano and Mainini, Edoardo},
  journal={Probability Theory and Related Fields},
  volume={189},
  number={1},
  pages={659--720},
  year={2024},
}

@article{shang2018gaussian,
  title={{G}aussian approximation of general non-parametric posterior distributions},
  author={Shang, Zuofeng and Cheng, Guang},
  journal={Information and Inference: A Journal of the IMA},
  volume={7},
  number={3},
  pages={509--529},
  year={2018},
  doi={10.1093/imaiai/iax017}
}

@book{evans1998partial,
  title={Partial Differential Equations},
  author={Evans, L.~C.},
  isbn={9780821807729},
  lccn={97041033},
  series={Graduate studies in mathematics},
  url={https://books.google.it/books?id=YHJfvwEACAAJ},
  year={1998},
  publisher={American Mathematical Society}
}

@article{sriperumbudur2017density,
  title={Density estimation in infinite dimensional exponential families},
  author={Sriperumbudur, Bharath and Fukumizu, Kenji and Gretton, Arthur and Hyv{\"a}rinen, Aapo and Kumar, Revant},
  journal={Journal of Machine Learning Research},
  volume={18},
  number={57},
  pages={1--59},
  year={2017}
}

@article{knapik2011bayesian,
  title={{B}ayesian inverse problems with {G}aussian priors},
  author={Knapik, Bartek T. and van der Vaart, Aad W. and van Zanten, J. Harry},
  journal={The Annals of Statistics},
  volume={39},
  number={5},
  pages={2626--2657},
  year={2011}
}

@article{knapik2016bayes,
  title={Bayes procedures for adaptive inference in inverse problems for the white noise model},
  author={Knapik, Bartek T. and Szab{\'o}, Botond T. and van der Vaart, Aad W. and van Zanten, J. Harry},
  journal={Probability Theory and Related Fields},
  volume={164},
  number={3--4},
  pages={771--813},
  year={2016},
  doi={10.1007/s00440-015-0619-7}
}

@article{gugushvili2020bayesian,
  title={{B}ayesian linear inverse problems in regularity scales},
  author={Gugushvili, Shota and van der Vaart, Aad W. and Yan, Dong},
  journal={Annales de l'Institut Henri Poincar{\'e}, Probabilit{\'e}s et Statistiques},
  volume={56},
  number={3},
  pages={2081--2107},
  year={2020}
}

@book{brown1986fundamentals,
  title={Fundamentals of Statistical Exponential Families with Applications in Statistical Decision Theory},
  author={Brown, Lawrence D.},
  series={IMS Lecture Notes--Monograph Series},
  volume={9},
  publisher={Institute of Mathematical Statistics},
  address={Hayward, CA},
  year={1986}
}

@article{pistone1995infinite,
  title={An infinite-dimensional geometric structure on the space of all the probability measures equivalent to a given one},
  author={Pistone, Giovanni and Sempi, Carlo},
  journal={The Annals of Statistics},
  volume={23},
  number={5},
  pages={1543--1561},
  year={1995}
}

@incollection{fukumizu2009exponential,
  title={Exponential Manifold by Reproducing Kernel {H}ilbert Spaces},
  author={Fukumizu, Kenji},
  booktitle={Algebraic and Geometric Methods in Statistics},
  editor={Gibilisco, Paolo and Riccomagno, Eva and Rogantin, Maria Piera and Wynn, Henry P.},
  pages={291--306},
  publisher={Cambridge University Press},
  address={Cambridge},
  year={2009},
  doi={10.1017/CBO9780511642401.019}
}

@article{canu2006kernel,
  title={Kernel methods and the exponential family},
  author={Canu, St{\'e}phane and Smola, Alex},
  journal={Neurocomputing},
  volume={69},
  number={7--9},
  pages={714--720},
  year={2006}
}

@article{scricciolo2006convergence,
  title={Convergence rates for {B}ayesian density estimation of infinite-dimensional exponential families},
  author={Scricciolo, Catia},
  journal={The Annals of Statistics},
  volume={34},
  number={6},
  pages={2897--2920},
  year={2006}
}

@article{rivoirard2012posterior,
  title={Posterior concentration rates for infinite dimensional exponential families},
  author={Rivoirard, Vincent and Rousseau, Judith},
  journal={Bayesian Analysis},
  volume={7},
  number={2},
  pages={311--334},
  year={2012},
  doi={10.1214/12-BA710}
}

@article{krein1966scales,
  title={Scales of {B}anach spaces},
  author={Krein, Selim G. and Petunin, Ju. I.},
  journal={Russian Mathematical Surveys},
  volume={21},
  number={2},
  pages={85--159},
  year={1966}
}

@book{engl1996regularization,
  title={Regularization of Inverse Problems},
  author={Engl, Heinz W. and Hanke, Martin and Neubauer, Andreas},
  series={Mathematics and its Applications},
  volume={375},
  publisher={Kluwer Academic Publishers},
  address={Dordrecht},
  year={1996}
}

@article{fukumizu2024estimation,
  title={Estimation with infinite-dimensional exponential family and {F}isher divergence},
  author={Fukumizu, Kenji},
  journal={Information Geometry},
  volume={7},
  pages={609--622},
  year={2024},
  doi={10.1007/s41884-023-00122-z}
}

@book{gine2016mathematical,
  title={Mathematical Foundations of Infinite-Dimensional Statistical Models},
  author={Gin{\'e}, Evarist and Nickl, Richard},
  series={Cambridge Series in Statistical and Probabilistic Mathematics},
  publisher={Cambridge University Press},
  address={New York},
  year={2016}
}

@article{kirichenko2015optimality,
  title={Optimality of {P}oisson processes intensity learning with {G}aussian processes},
  author={Kirichenko, Alisa and van Zanten, Harry},
  journal={Journal of Machine Learning Research},
  volume={16},
  pages={2909--2919},
  year={2015}
}

@article{belitser2015rate,
  title={Rate-optimal {B}ayesian intensity smoothing for inhomogeneous {P}oisson processes},
  author={Belitser, Eduard N. and Serra, Paulo and van Zanten, Harry},
  journal={Journal of Statistical Planning and Inference},
  volume={166},
  pages={24--35},
  year={2015},
  doi={10.1016/j.jspi.2014.03.009}
}

@article{donnet2017posterior,
  title={Posterior concentration rates for counting processes with {A}alen multiplicative intensities},
  author={Donnet, Sophie and Rivoirard, Vincent and Rousseau, Judith and Scricciolo, Catia},
  journal={Bayesian Analysis},
  volume={12},
  number={1},
  pages={53--87},
  year={2017}
}

@article{belitser2003adaptive,
  title={Adaptive {B}ayesian inference on the mean of an infinite-dimensional normal distribution},
  author={Belitser, Eduard and Ghosal, Subhashis},
  journal={The Annals of Statistics},
  volume={31},
  number={2},
  pages={536--559},
  year={2003},
  doi={10.1214/aos/1051027880}
}

@book{kutoyants1998statistical,
  title={Statistical Inference for Spatial {P}oisson Processes},
  author={Kutoyants, Yury A.},
  series={Lecture Notes in Statistics},
  volume={134},
  publisher={Springer-Verlag},
  address={New York},
  year={1998}
}

@book{hjort2010bayesian,
  title={Bayesian Nonparametrics},
  editor={Hjort, Nils Lid and Holmes, Chris and M{\"u}ller, Peter and Walker, Stephen G.},
  series={Cambridge Series in Statistical and Probabilistic Mathematics},
  publisher={Cambridge University Press},
  address={Cambridge},
  year={2010}
}

@book{muller2015bayesian,
  title={Bayesian Nonparametric Data Analysis},
  author={M{\"u}ller, Peter and Quintana, Fernando Andr{\'e}s and Jara, Alejandro and Hanson, Tim},
  series={Springer Series in Statistics},
  publisher={Springer},
  address={Cham},
  year={2015},
  doi={10.1007/978-3-319-18968-0}
}

@book{castillo2024bayesian,
  title={Bayesian Nonparametric Statistics: {\'E}cole d'{\'E}t{\'e} de Probabilit{\'e}s de Saint-Flour {LI} -- 2023},
  author={Castillo, Isma{\"e}l},
  series={Lecture Notes in Mathematics},
  volume={2358},
  publisher={Springer},
  address={Cham},
  year={2024},
  doi={10.1007/978-3-031-74035-0}
}

@article{diaconis1986consistency,
  title={On the consistency of {B}ayes estimates},
  author={Diaconis, Persi and Freedman, David},
  journal={The Annals of Statistics},
  volume={14},
  number={1},
  pages={1--26},
  year={1986}
}

@article{cox1993analysis,
  title={An analysis of {B}ayesian inference for nonparametric regression},
  author={Cox, Dennis D.},
  journal={The Annals of Statistics},
  volume={21},
  number={2},
  pages={903--923},
  year={1993}
}

@article{freedman1999bernstein,
  title={On the {B}ernstein--von {M}ises theorem with infinite-dimensional parameters},
  author={Freedman, David},
  journal={The Annals of Statistics},
  volume={27},
  number={4},
  pages={1119--1140},
  year={1999}
}

@article{schwartz1965bayes,
  title={On {B}ayes procedures},
  author={Schwartz, Lorraine},
  journal={Zeitschrift f{\"u}r Wahrscheinlichkeitstheorie und Verwandte Gebiete},
  volume={4},
  number={1},
  pages={10--26},
  year={1965}
}

@article{ghosal2000convergence,
  title={Convergence rates of posterior distributions},
  author={Ghosal, Subhashis and Ghosh, Jayanta K. and van der Vaart, Aad W.},
  journal={The Annals of Statistics},
  volume={28},
  number={2},
  pages={500--531},
  year={2000}
}

@article{shen2001rates,
  title={Rates of convergence of posterior distributions},
  author={Shen, Xiaotong and Wasserman, Larry},
  journal={The Annals of Statistics},
  volume={29},
  number={3},
  pages={687--714},
  year={2001}
}

@article{ghosal2007convergence,
  title={Convergence rates of posterior distributions for noniid observations},
  author={Ghosal, Subhashis and van der Vaart, Aad},
  journal={The Annals of Statistics},
  volume={35},
  number={1},
  pages={192--223},
  year={2007}
}

@article{walker2007rates,
  title={On rates of convergence for posterior distributions in infinite-dimensional models},
  author={Walker, Stephen G. and Lijoi, Antonio and Pr{\"u}nster, Igor},
  journal={The Annals of Statistics},
  volume={35},
  number={2},
  pages={738--746},
  year={2007},
  doi={10.1214/009053606000001361}
}

@article{castillo2014supremum,
  title={On {B}ayesian supremum norm contraction rates},
  author={Castillo, Isma{\"e}l},
  journal={The Annals of Statistics},
  volume={42},
  number={5},
  pages={2058--2091},
  year={2014},
  doi={10.1214/14-AOS1253}
}

@article{yoo2016supremum,
  title={Supremum norm posterior contraction and credible sets for nonparametric multivariate regression},
  author={Yoo, William Weimin and Ghosal, Subhashis},
  journal={The Annals of Statistics},
  volume={44},
  number={3},
  pages={1069--1102},
  year={2016},
  doi={10.1214/15-AOS1398}
}

@article{yoo2018adaptive,
  title={Adaptive supremum norm posterior contraction: Wavelet spike-and-slab and anisotropic {B}esov spaces},
  author={Yoo, William Weimin and Rivoirard, Vincent and Rousseau, Judith},
  journal={arXiv preprint arXiv:1708.01909},
  year={2018},
  eprint={1708.01909},
  archivePrefix={arXiv}
}

@article{liu2026optimal,
  title={Optimal plug-in {G}aussian processes for modeling derivatives},
  author={Liu, Zejian and Li, Meng},
  journal={Journal of the American Statistical Association},
  year={2026},
  doi={10.1080/01621459.2026.2612778},
  note={Published online}
}

@article{shen2017posterior,
  title={Posterior contraction rates of density derivative estimation},
  author={Shen, Weining and Ghosal, Subhashis},
  journal={Sankhy\={a}: The Indian Journal of Statistics, Series A},
  volume={79},
  number={2},
  pages={336--354},
  year={2017},
  doi={10.1007/s13171-017-0105-7}
}

@article{comte2020optimal,
  title={Optimal adaptive estimation on $\mathbb{R}$ or $\mathbb{R}^+$ of the derivatives of a density},
  author={Comte, Fabienne and Duval, Celine and Sacko, Ousmane},
  journal={Mathematical Methods of Statistics},
  volume={29},
  number={1},
  pages={1--31},
  year={2020}
}

@article{fischer2020sobolev,
  title={Sobolev norm learning rates for regularized least-squares algorithms},
  author={Fischer, Simon and Steinwart, Ingo},
  journal={Journal of Machine Learning Research},
  volume={21},
  number={205},
  pages={1--38},
  year={2020},
  url={https://jmlr.org/papers/v21/19-734.html}
}

@inproceedings{wibisono2024optimal,
  title={Optimal score estimation via empirical {B}ayes smoothing},
  author={Wibisono, Andre and Wu, Yihong and Yang, Kaylee Yingxi},
  booktitle={Proceedings of the Thirty-Seventh Conference on Learning Theory},
  series={Proceedings of Machine Learning Research},
  volume={247},
  pages={4958--4991},
  publisher={PMLR},
  year={2024},
  url={https://proceedings.mlr.press/v247/wibisono24a.html}
}

\end{document}